\theoremstyle{plain}
\newtheorem{theorem}[subsubsection]{Theorem}
\newtheorem{corollary}[subsubsection]{Corollary}
\newtheorem{lemma}[subsubsection]{Lemma}
\newtheorem{proposition}[subsubsection]{Proposition}
\theoremstyle{definition}
\newtheorem{construction}[subsubsection]{Construction}
\newtheorem{remark}[subsubsection]{Remark}
\newtheorem{example}[subsubsection]{Example}
\newtheorem{definition}[subsubsection]{Definition}
\newtheorem{notation}[subsubsection]{Notation}
\newtheorem*{metricvariant*}{Metric Version}
\numberwithin{equation}{subsection}
\renewcommand{\bar}{\overline}
\newcommand{\num}[1]{n(#1)}
\def\R{\mathbb R}
\def\Q{\mathbb Q}
\def\Z{\mathbb Z}
\def\N{\mathbb N}
\def\C{\mathbb C}
\def\F{\mathbb F}
\def\E{\mathbb E}
\def\A{\mathbb A}
\def\P{\mathbb P}
\def\H{\mathrm H}
\def\G{\mathcal G}
\def\I{\mathrm I}
\renewcommand{\O}{\mathcal O}
\newcommand\mcal{\mathcal}
\def\isoarrow{\overset{\sim}{\longrightarrow}}
\def\Hom{\mathrm{Hom}}
\def\Aut{\mathrm{Aut}}
\def\Ind{\mathrm{Ind}}
\def\cR{\mathcal{R}}
\def\s{\mathfrak{s}}
\def\Frob{\mathrm{Frob}}
\def\coker{\mathrm{coker}}
\def\gNeron{\Phi}
\def\usual{\mathrm{usual}}
\def\Fil{\mathrm{Fil}}
\def\Gr{\mathrm{Gr}}
\def\Berk{\mathrm{Berk}}
\def\Jac{\mathrm{Jac}}
\newcommand\Div{\mathrm{Div}}
\def\nr{\mathrm{nr}}
\newcommand\id{\mathrm{id}}
\newcommand\even{\mathrm{even}}
\def\disc{\mathrm{disc}}
\def\im{\mathrm{im}}
\def\sm{\mathrm{sm}}
\def\GL{\mathrm{GL}}
\def\SL{\mathrm{SL}}
\def\s{\mathfrak{s}}
\title[Tamagawa numbers of semistable hyperelliptic curves]{Variation of Tamagawa numbers of Jacobians of hyperelliptic curves with semistable reduction}
\author{L.\ Alexander Betts}
\thanks{This paper was written while the author was supported by EPSRC grant EP/M016838/2: \emph{Arithmetic of hyperelliptic curves}.}
\date\today
\address{Max Planck Institut f\"ur Mathematik, Vivatsgasse 7, Bonn 53111}
\email{betts@mpim-bonn.mpg.de}
\subjclass[2010]{11G20 (primary), and 11G40, 14G20, 20C10 (secondary)}
\begin{document}

\maketitle

\begin{abstract}
We study how Tamagawa numbers of Jacobians of hyperelliptic curves vary as one varies the base field or the curve, in the case of semistable reduction. We find that there are strong constraints on the behaviour that appears, some of which are unexpected and specific to hyperelliptic curves. Our methods are explicit and allow one to write down formulae for Tamagawa numbers of infinite families of hyperelliptic curves, of the kind used in proofs of the parity conjecture for Jacobians of curves of small genus.
\end{abstract}

\tableofcontents

\newpage

\section{Introduction}

Fix a finite extension $K$ of $\Q_p$ with ring of integers $\O_K$ and residue field $k$, and let $X/K$ be a (smooth, projective, geometrically integral) semistable\footnote{Throughout this paper, we will use ``semistable'' as a shorthand for ``with semistable reduction'', i.e.\ possessing a regular model whose special fibre is a reduced normal crossings divisor.} curve. The Jacobian $\Jac(X)$ is then also semistable, meaning that the special fibre of its N\'eron model is an extension of a finite $k$-group-scheme $\Phi_{\Jac(X)/K}$ by an abelian variety by a torus. The number $c_{\Jac(X)/K}:=\#\Phi_{\Jac(X)/K}(k)$ of $k$-rational points of this group is known as the \emph{Tamagawa number} of $\Jac(X)$, and provides a crude numerical invariant of the reduction type of $\Jac(X)$. Although this captures very little information about the reduction type, Tamagawa numbers turn out to have lots of arithmetic content, and appear famously in the leading terms for the $L$-series of abelian varieties predicted by the Birch--Swinnerton-Dyer Conjecture.

The object of this paper is to study how the Tamagawa number $c_{\Jac(X)/K}$ of the Jacobian of $X$ -- henceforth simply called the Tamagawa number $c_{X/K}$ of $X$ -- changes as we vary both the curve $X$ and the field $K$, in the special case that $X/K$ is hyperelliptic\footnote{For our purposes, a hyperelliptic curve is a smooth, projective, geometrically connected curve $X/K$ endowed with a degree $2$ map to a genus $0$ curve over $K$. We don't assume that the genus of $X$ is $\geq2$, or that $X/\iota\simeq\P^1_K$.}.

\subsection{Variation in field extensions}

Our first and most striking result governs the behaviour of the Tamagawa number $c_{X/L}:=c_{X_L/L}$ as $L$ varies over finite extensions of $K$, for a fixed semistable hyperelliptic curve $X/K$. Our main theorem says that $c_{X/L}$ is a function of $L$ of a very specific form.

\begin{theorem}[Theorem~\ref{thm:combinatorial_growth_in_towers}, cf.\ Lemma~\ref{lem:change_of_BY_trees_in_field_extensions}]\label{thm:growth_in_towers}
Let $p$ be a prime, $K$ a finite extension of $\Q_p$, and $X/K$ a semistable hyperelliptic curve. Then there are elements $(a_d,r_d,s_d)\in\N\times\N_0\times\Z$ for each $d\in\N$ (equal to $(1,0,0)$ for almost all $d$) such that
\[
c_{X/L} = \prod_{d\mid f}\left(a_d\cdot e^{r_d}\cdot\gcd(e,2)^{s_d}\right)^{\varphi(d)}
\]
for all finite extensions $L/K$, where $e=e(L/K)$ and $f=f(L/K)$ are the ramification and residue class degrees, respectively, and $\varphi$ is Euler's totient function.
\end{theorem}

Here is a sample consequence.

\begin{corollary}\label{cor:tamagawa_numbers_higher_powers}
Suppose that we are in the setup of theorem \ref{thm:growth_in_towers}. Let $q$ be a prime, and let $K_q/K$ be the unramified extension of degree $q$. Then $c_{X/K_q}/c_{X/K}$ is a $(q-1)$th power.
\begin{proof}
Apply theorem~\ref{thm:growth_in_towers} to $X/K$ and $X_{K_q}/K_q$.
\end{proof}
\end{corollary}

What is particularly surprising about this result is that it is no longer true if we remove the assumption that $X$ is hyperelliptic: we will give an example in example~\ref{ex:non-hyperelliptic} of a semistable non-hyperelliptic curve $X/K$ such that $c_{X/K_5}=121$ but $c_{X/K}=1$, whose ratio is not a $4$th power. Thus, theorem~\ref{thm:growth_in_towers} shows that the functions $L\mapsto c_{X/L}$ for semistable hyperelliptic curves $X/K$ are not generic among the corresponding functions for all semistable curves.

\subsection{Variation in degenerating families}\label{ss:degeneration}

Our second result governs the behaviour of the Tamagawa number $c_{X/K}$ as $X$ varies in a degenerating family, in the case that $K$ is a finite extension of $\Q_p$ for $p\neq2$. Suppose that $f_0\in K[x]$ is a polynomial of degree $d\geq5$ with at worst double roots, whose splitting field is unramified over $K$. If $f\in K[x]$ is a squarefree polynomial of the same degree $d$, then we will describe the behaviour of the Tamagawa numbers of the curves $X_f$ with affine equation $y^2=f(x)$ as $f$ approaches $f_0$ in the $p$-adic topology, at least when $X_f$ is semistable.

These Tamagawa numbers are controlled very precisely by the relative position of the roots of $f$. It we enumerate the double roots of $f_0$ as $\alpha_1,\dots,\alpha_m$, then any $f$ sufficiently close to $f_0$ has exactly two roots $\beta_i,\gamma_i$ in a small disc about each $\alpha_i$ (see proposition~\ref{prop:close_polys_have_close_roots} for a precise statement). We write $d_i(f):=v_K(\beta_i-\gamma_i)\in\frac12\Z$, where $v_K$ is the valuation on the algebraic closure of $K$, normalised so that the valuation of a uniformiser of $K$ is $1$. These quantities entirely control the Tamagawa numbers of the curves $X_f$, when semistable, in a precise manner.

\begin{theorem}[see \S\ref{ss:degeneration_proof}]\label{thm:degeneration}
Keep notation ($p$, $K$, $f_0$, $d$, $X_f$, $d_i(f)$) and hypotheses as above, and let $|\cdot|\colon K[x]\rightarrow\R^{\geq0}$ denote the Gauss norm on polynomials (the largest norm\footnote{The normalisation of the norm on $K$ doesn't matter in this theorem; later to fix notation we will use the norm on $K$ for which a uniformiser has norm $p^{-1}$.} of a coefficient). Then there is a positive $\delta$, depending on $f_0$, such that:
\begin{enumerate}
	\item\label{thmpart:degeneration_semistability} Either:
	\begin{itemize}
		\item for all squarefree polynomials $f\in K[x]$ of degree $d$ with $|f-f_0|<\delta$, the curve $X_f$ is semistable; or
		\item for all squarefree polynomials $f\in K[x]$ of degree $d$ with $|f-f_0|<\delta$, the curve $X_f$ is not semistable.
	\end{itemize}
	\item\label{thmpart:degeneration_formula} In the first case (all $X_f$ semistable), there exists a rational polynomial $P$ in $m$ variables and a function $s\colon\F_2^m\rightarrow\Z$ such that
	\[
	c_{X_f/K} = 2^{s(\rho(2d_1(f),\dots,2d_m(f)))}\cdot P(d_1(f),\dots,d_m(f))
	\]
	for all squarefree polynomials $f\in K[x]$ of degree $d$ with $|f-f_0|<\delta$, where $\rho\colon\Z^m\to\F_2^m$ is the reduction map modulo $2$.
\end{enumerate}
Moreover, there is an explicit procedure to determine which of the two possibilities occurs in part~\eqref{thmpart:degeneration_semistability} and the polynomial $P$ and function $s$ in part~\eqref{thmpart:degeneration_formula}.
\end{theorem}


\begin{remark}
In the case that $f_0$ is squarefree, theorem~\ref{thm:degeneration} shows that if $X_{f_0}$ is semistable, then there is some $\delta>0$ such that whenever $|f-f_0|<\delta$, we have that $f$ is also squarefree, and that $X_f$ is semistable with the same Tamagawa number as $X_{f_0}$.
\end{remark}

\begin{remark}
We can think of theorem~\ref{thm:degeneration}\eqref{thmpart:degeneration_formula} as describing Tamagawa numbers of semistable hyperelliptic curves in a neighbourhood of certain points on the boundary of the moduli space of hyperelliptic curves. However, some care is needed in doing so, since there are several inequivalent ways to compactify the moduli space of hyperelliptic curves \cite[Remark~4.3]{arsie-vistoli:stacks_of_covers}.

Let $\A_g\cong\A^{2g+3}$ denote the affine space (over $\Q$) parametrising homogenous binary forms in two variables $x,z$, and let $\A_{g,\sm}$ denote the open subscheme consisting of squarefree forms. There is an action of $\GL_2$ on $\A_g$ by change of coordinates, under which the subscheme $\mu_{g+1}$ of $(g+1)$th roots of unity (embedded diagonally in $\GL_2$) acts trivially. Arsie and Vistoli prove that the moduli stack $\mathcal H_{g,\sm}$ of (smooth) hyperelliptic curves is canonically equivalent to the quotient stack $[\A_{g,\sm}/(\GL_2/\mu_{g+1})]$ \cite[Corollary~4.2]{arsie-vistoli:stacks_of_covers}. The universal hyperelliptic curve on $\mathcal H_{g,\sm}$ pulls back to the hyperelliptic curve on $\A_{g,\sm}$ with affine equation $y^2=F(x,1)$, where $F(x,z)$ is the universal homogenous form of degree $2g+2$ on $\A_{g,\sm}$ (if $a_0,\dots,a_{2g+2}$ are the coordinates on $\A_g\cong\A^{2g+3}$, then~$F$ is the form $a_{2g+2}x^{2g+2}+a_{2g+1}x^{2g+1}z+\dots+a_0z^{2g+2}$).

One can produce various partial compactifications of the moduli space $\mathcal H_{g,\sm}$ by finding $\GL_2/\mu_{g+1}$-equivariant dense open embeddings of $\A_{g,\sm}$ into larger schemes $\overline\A_g$ -- the quotient stack $[\overline\A_g/(\GL_2/\mu_{g+1})]$ then contains $\mathcal H_{g,\sm}$ as a dense open substack. For instance, taking  the scheme $\A_{g,0}$ of all non-zero binary forms produces a partial compactification $\mathcal H_g$ of $\mathcal H_{g,\sm}$ which is an Artin stack with finite diagonal and quasiprojective moduli space \cite[p654]{arsie-vistoli:stacks_of_covers}. Alternatively, one can use a procedure due to Kirwan to construct a $\GL_2/\mu_{g+1}$-equivariant morphism $K_g\to\A_{g,0}$, and the corresponding quotient stack $\overline{\mathcal H}_g=[K_g/(\GL_2/\mu_{g+1})]$ is an Artin stack with finite diagonal and projective moduli space (hence, a compactification of $\mathcal H_{g,\sm}$) \cite[Remark~4.3]{arsie-vistoli:stacks_of_covers}.

Instead of these, theorem~\ref{thm:degeneration} takes place on the partial compactification $\mathcal H_{g,\leq2}:=[\A_{g,\leq2}/(\GL_2/\mu_{g+1})]$ of $\mathcal H_{g,\sm}$, where $\A_{g,\leq2}$ is the space of cubefree homogenous binary forms of degree $2g+2$. This is a dense open substack of both $\mathcal H_g$ and\footnote{This latter requires some justification. Kirwan's procedure, applied to a projective variety $X$ with an action of a reductive group $G$ on its affine cone, gives an iterated blowup of $X$ whose exceptional locus is contained in the locus of non-semistable points, together with the closure of the locus of semistable points fixed by a reductive subgroup of positive dimension \cite[\S\S5--6~\&~Lemma~4.3]{kirwan:partial_desingularisations}. In the particular case $X=\P(\A_{g,0})$, $G=\SL_2$, as in \cite[\S9]{kirwan:partial_desingularisations}, the non-semistable locus consists of the homogenous forms with a root of multiplicity $\geq g+2$ in $\P^1$, while the semistable points fixed by a positive-dimensional reductive subgroup are the homogenous forms with two distinct roots of multiplicity $g+1$ each. The closure of these loci is disjoint from $\A_{g,\leq2}$, which is thus an open subscheme of $K_g$. This makes $\mathcal H_{g,\leq2}$ an open substack of $\overline{\mathcal H}_g$.} $\overline{\mathcal H}_g$. The cubefree polynomial $f_0$ of degree $d=2g+1$ or $2g+2$ determines a $K_v$-point on $\A_{g,\leq2}$, and theorem~\ref{thm:formula} describes the Tamagawa number of a semistable hyperelliptic curve corresponding to a point in the intersection of $\A_{g,\sm}$ with a small $p$-adic neighbourhood of $f_0$.
\end{remark}

\subsection{Reduction types and Tamagawa numbers}

What unites the above two settings -- enlargement of the base field and degeneration of the curve -- is that in both cases we are interested in studying infinite families of curves whose reduction types belong to an infinite parametrised family. As an illustration of what we mean by this, consider the elliptic curve $E/\Q_p$ with affine equation $y^2=(x-x^2)(x-p)$. Over $\Q_p$, this curve has split multiplicative reduction of Kodaira type $\I_1$, but over a finite extension $L$ of $\Q_p$ the reduction type instead becomes $\I_{e(L/\Q_p)}$. Equally, in the family of elliptic curves $E_\alpha/\Q_p$ with equations $y^2=(x-x^2)(x-\alpha)$ for $\alpha\in p\Z_p\setminus\{0\}$, all the curves have split multiplicative reduction of Kodaira type $\I_n$ for some $n$, but the value of $n$ depends on the value of $\alpha$ (in fact, $n=v_p(\alpha)$). Thus, the Tamagawa numbers of $E_L/L$ for varying $L$ as well as the Tamagawa numbers of the curves $E_\alpha/\Q_p$ for varying $\alpha$ are both determined by the following fact: the Tamagawa number of an elliptic curve of split type $\I_n$ reduction is $n$.

To generalise this observation to hyperelliptic curves, we will replace the Kodaira type of a semistable elliptic curve $E/K$ with the dual graph $\G$ of the geometric special fibre of the minimal regular model $\mathfrak X$ of $X/K$ (definition~\ref{def:dual_graph}) together with its induced Frobenius action. In the setup of theorem~\ref{thm:growth_in_towers}, it turns out that the dual graph associated to $X_L/L$ is a subdivision of the dual graph associated to $X/K$ (lemma~\ref{lem:change_of_BY_trees_in_field_extensions}). Equally, in the setup of theorem~\ref{thm:degeneration}\eqref{thmpart:degeneration_formula}, the dual graph associated to $X_f$ is a subdivision of a fixed graph determined by $f_0$ (corollary~\ref{cor:close_polys_have_similar_BY_trees}). Thus, the bulk of the work in proving our two main theorems will be contained in a result of the following kind: for a fixed graph~$\G_0$ with automorphism, the Tamagawa number of any semistable hyperelliptic curve~$X$ whose dual graph is a subdivision of~$\G_0$ is given by an explicit formula, depending only on the number of times each edge of~$\G_0$ is subdivided in the dual graph of~$X$.


\smallskip

To prove a result of this kind, we use the well-known fact that the Tamagawa number of a semistable curve~$X/K$ can be read off from its dual graph~$\G$ (with the induced Frobenius action). Over the maximal unramified extension $K^\nr$ of $K$, the description is simple: the Tamagawa number of $X_{K^\nr}/K^\nr$ is equal to the order of the \emph{Jacobian} or \emph{sandpile group} $\gNeron_\G$ of $\G$, a certain finite group defined purely combinatorially in terms of $\G$ (see definition~\ref{def:intersection}). Moreover, the order of $\gNeron_\G$ also admits a purely graph-theoretic characterisation as the number of spanning trees in $\G$ \cite{levine-propp:sandpile?}.

Over the ground field $K$ however, the Tamagawa number is instead equal to the order of the Frobenius-invariant subgroup of $\gNeron_\G$. This quantity is in general much more subtle than the order of the entire group $\gNeron_\G$ \cite{bosch-liu:rational_neron}, and in particular doesn't seem to have a natural graph-theoretic interpretation in terms of spanning trees. Moreover, it is far from clear how this quantity behaves under subdivision of edges of $\G$: \textit{a priori}, the Jacobian $\gNeron_\G$ is the cokernel of a matrix whose dimension depends on the number of vertices of $\G$, and then computing the number of invariant elements of this cokernel involves complicated calculations with the cohomology of lattice representations of cyclic groups.

We overcome these difficulties by making some simplifications particular to the setting of hyperelliptic curves. The dual graph $\G$ of the geometric special fibre of a semistable hyperelliptic curve has the property that the quotient of $\G$ by the hyperelliptic involution is a tree $T$ (lemma~\ref{lem:hyperelliptic_reduction}) -- such a graph is called a \emph{hyperelliptic graph} in \cite{baker-norine:hyperelliptic,m2d2} -- and $\G$ can be reconstructed from this quotient tree together with the ramification locus $S$ of the quotient map $\G\twoheadrightarrow T$ (proposition~\ref{prop:hyperelliptic-BY_equivalence}). Moreover, the Frobenius on~$\G$ induces a certain ``signed'' automorphism $\epsilon F$ of the pair $(T,S)$ (definition~\ref{def:BY_tree}).

This data of a tree $T$ and a subgraph $S\subseteq T$ is called a \emph{BY tree}\footnote{BY since the subgraph $S$ is usually drawn in blue and the tree $T$ in yellow.} (definition~\ref{def:BY_tree}, following \cite{semistable_types}), and provides the combinatorial framework we will use for studying Tamagawa numbers of semistable hyperelliptic curves. We will define an invariant $c_{T,\epsilon F}$ of a BY tree $T=(T,S)$ and signed automorphism $\epsilon F$, called its \emph{Tamagawa number} (definition~\ref{def:BY_intersection}), which, when $(T,S,\epsilon F)$ arises from a semistable hyperelliptic curve~$X/K$, recovers the Tamagawa number~$c_{X/K}$ of~$X/K$ (\S\ref{sss:BY_tree_of_curve}). The main technical input in this paper, then, is a purely combinatorial result describing how the Tamagawa number of a BY tree changes as one subdivides its edges.

\begin{theorem}[=corollary~\ref{cor:qualitative}]\label{thm:subdivision_formula}
Let $T_0=(T_0,S_0)$ be a BY tree and $\epsilon F=(F,\epsilon)$ a signed isomorphism of $T_0$ (see definition~\ref{def:BY_tree}). Enumerate the $F$-orbits of edges in $T_0$ as $\omega_1,\dots,\omega_m$, and for a tuple $l=(l_1,\dots,l_m)\in\N^m$ let $T_0^{(l)}$ denote the subdivision of $T_0$ formed by replacing each edge in the $F$-orbit $\omega_i$ by a chain of $l_i$ edges. The signed automorphism $\epsilon F$ induces a signed automorphism of $T_0^{(l)}$, which we also denote by $\epsilon F$.

Then there is a homogenous polynomial $P\in\Q[t_1,\dots,t_m]$ and a function $s\colon\F_2^m\rightarrow\Z$ such that the Tamagawa number $c_{T_0^{(l)},\epsilon F}$ of $(T_0^{(l)},\epsilon F)$ is given by
\[
c_{T_0^{(l)},\epsilon F} = 2^{s(\rho(l))}\cdot P(l)\,,
\]
where $\rho\colon\Z^m\to\F_2^m$ is the reduction map modulo $2$. Moreover, for $T_0^{(l)}$ and $\epsilon F$ both even (definition~\ref{def:parity} -- this condition is automatic for BY trees arising for hyperelliptic curves) there is an explicit description of both $P$ and $s$ in terms of $T_0$.
\end{theorem}

\begin{remark}
The fact that theorem~\ref{thm:formula} gives an explicit description of $P$ and $s$ is significant in the context of proofs of cases of the parity conjecture \cite{dokchitsers:parity_conjecture,dokchitsers:parity_with_isogeny,dokchitsers:regulator_constants,dokchitser-maistret:abelian_surfaces}. Tamagawa numbers play an important role in these proofs: one first relates information about parities of ranks of abelian varieties to global Tamagawa numbers (as in e.g.\ \cite[Proof of Theorem 6]{dokchitsers:parity_with_isogeny} or \cite[Corollary~2.21]{dokchitsers:parity_conjecture}), and then relates global Tamagawa numbers to global root numbers by expressing them as a product of local terms. What makes this latter step particularly subtle is that the local Tamagawa numbers and local root numbers do not in general match up place-by-place, and so one needs to show that the total discrepancy over all places vanishes, for example via Artin symbols \cite[Proof of Theorem 2]{dokchitsers:parity_with_isogeny} or by taking an appropriate combination of local invariants over field extensions \cite[Proposition~3.3]{dokchitsers:parity_conjecture}. Proving these relations between local Tamagawa numbers and root numbers often proceeds via a case-by-case analysis of the possible reduction types involved, see e.g.\ the proof of \cite[Proposition 3.3]{dokchitsers:parity_conjecture}, or especially \cite[Theorem 4.5]{dokchitser-maistret:abelian_surfaces}.

Of course, in order to be able to carry out such a case-by-case analysis, it is necessary to have explicit expressions for the Tamagawa numbers of all possible reduction types that might appear. In general, the list of such reduction types is infinite, but is made up of finitely many lists indexed by integer parameters (e.g.\ the Kodaira classification of reduction types of elliptic curves contains two infinite families $\I_n$ and $\I_n^*$ and finitely many exceptional cases). In the setting of hyperelliptic curves, there are infinitely many graphs arising as the dual graph of a semistable hyperelliptic curve $X/K$ of some fixed genus, but these are all subdivisions of a finite list of graphs (see \cite[Table~9.3]{semistable_types} for the complete list in genus~$2$). Thus theorem~\ref{thm:subdivision_formula} allows us, at least in principle, to write down explicit formulas describing the Tamagawa numbers of \emph{every} semistable hyperelliptic curve of a given genus (as a function of its dual graph).
\end{remark}

\begin{remark}
By combining the explicit description of Tamagawa numbers in theorem~\ref{thm:formula} with the ``cluster picture'' machinery of \cite{m2d2}, one obtains an explicit way to read off the Tamagawa number of a semistable hyperelliptic curve $X/K$ from an explicit affine equation $y^2=f(x)$ (for $\deg(f)\geq5$ and $p\neq2$). Indeed, \cite[\S1]{m2d2} explains how to associate to $f$ a certain combinatorial object called a \emph{cluster picture}, from which one can read off a great number of arithmetic quantities of $X$, including whether it is semistable \cite[Theorem~7.1]{m2d2} and the BY tree associated to the dual graph of its special fibre \cite[Theorem~5.18]{m2d2}. Plugging this BY tree into theorem~\ref{thm:formula} yields the Tamagawa number of $X/K$.

This algorithm has been implemented in Sage by Alex Best and Raymond van Bommel as part of a wider implementation of the theory of clusters \cite{users_guide,users_guide_implementation}. Algorithms to efficiently compute Tamagawa numbers are of relevance, for example, in the context of numerical verification of the Birch--Swinnerton-Dyer Conjecture, as in \cite{van_bommel:numerical_verification}.
\end{remark}

\begin{remark}
One can apply theorem~\ref{thm:formula} also in the case when the signed automorphism $\epsilon F$ is trivial, which gives a formula for the size of the Jacobian group of a hyperelliptic graph $\G$ in terms of its associated BY tree. It turns out that in this case, theorem~\ref{thm:formula} essentially just asserts that the order of the Jacobian group is the number of spanning trees in $\G$, as per the matrix-tree theorem \cite[Corollary~II.3.13]{bollobas:graph_theory}. Moreover, the proof of theorem~\ref{thm:formula} essentially just recovers one of the standard proofs of this fact: the order of the Jacobian group is given by the discriminant of a certain pairing (proposition~\ref{prop:positive_coker}) which is equal to any cofactor of the Laplacian matrix (proposition~\ref{prop:jacobian_is_jacobian}), and by changing the lengths of edges to specific values one describes this discriminant combinatorially (lemma~\ref{lem:discriminant}).
\end{remark}

\noindent\textbf{Acknowledgments.} I am very grateful to Vladimir Dokchitser for suggesting this problem to me and, along with Tim Dokchitser, C\'eline Maistret and Adam Morgan, taking the time to explain to me the cluster picture machinery developed in \cite{m2d2}. I would also like to thank Anna Somoza for helping me understand moduli stacks of hyperelliptic curves, Omri Faraggi and Simone Muselli for pointing out a mistake in the earlier statement of theorem~\ref{thm:formula}, and the referee for their many helpful suggestions.

The author was employed under EPSRC grant EP/M016846/2 during the preparation of this manuscript.
\section{BY forests and Tamagawa numbers}
\label{s:BY_forests}

To begin with, we recall how one computes Tamagawa numbers of semistable curves via graph theory, and the particular simplifications one can make in the case of hyperelliptic curves, following \cite{m2d2}. Our definitions differ slightly from those in \cite{m2d2,semistable_types}, and we shall highlight such discrepancies when they arise.

\subsubsection{Conventions}\label{sss:graph_conventions}

Throughout this paper, by a \emph{graph} $\G$ we will formally mean a graph in the sense of \cite[\S2.1]{serre:trees}, so a set $V(\G)$ of \emph{vertices} and a set $\tilde E(\G)$ of \emph{oriented edges}, together with a \emph{source} map $s\colon\tilde E(\G)\to V(\G)$ and an \emph{edge-inversion} map $\bar\cdot\colon\tilde E(\G)\to\tilde E(\G)$ which is an involution without fixed points. All graphs appearing will be finite: that is, the sets $V(\G)$ and $\tilde E(\G)$ will always be finite. The \emph{target} map $t\colon \tilde E(\G)\to V(\G)$ is the map $e\mapsto s(\bar e)$. We write $E(\G)$ for the quotient of $\tilde E(\G)$ under the identification $e\sim\bar e$, and refer to $E(\G)$ as the set of \emph{edges} of $\G$. Note that a graph in this sense may have loops (edges $e$ with $s(e)=t(e)$) and parallel edges (distinct edges $e_1,e_2$ with $s(e_1)=s(e_1)$ and $t(e_1)=t(e_2)$). By an (iso)morphism of graphs, we mean an (invertible) morphism in the sense of \cite[\S2.1]{serre:trees}, so a map on vertex- and oriented edge-sets which preserves all relevant structure.

If we are given a function $l\colon E(\G)\rightarrow\N$ (equivalently an edge-inversion-invariant function $l\colon\tilde E(\G)\rightarrow\N$), then we define the \emph{subdivision} $\G^{(l)}$ to be the graph formed by replacing every edge $e$ of $\G$ with a chain of $l(e)$ edges. Formally, this is defined as follows.
\begin{itemize}
	\item The oriented edges of $\G^{(l)}$ are pairs $(e,i)$ with $e$ an oriented edge of $\G$ and $0\leq i<l(e)$ an integer. The inverse of edge $(e,i)$ is $(\bar e,l(e)-i-1)$.
	\item The vertices of $\G^{(l)}$ are the vertices of $\G$ together with new vertices $s(e,i)$ for each edge $(e,i)$ of $\G^{(l)}$, quotiented by the identifications $s(e,i)\sim s(\bar e,l(e)-i)$ for $0<i<l(e)$ and $s(e,0)\sim s(e)$. The map $(e,i)\mapsto s(e,i)$ is the source function on $\G^{(l)}$.
\end{itemize}

\smallskip

We will also give versions of our main results for metric graphs. A \emph{metric} on a graph $\G$ is a function $l\colon E(\G)\rightarrow\R_{>0}$ (equivalently an edge-inversion-invariant function $l\colon\tilde E(\G)\rightarrow\R_{>0}$), assigning to each edge its \emph{length} -- we say that the metric is \emph{integral} if it takes values in $\N$. An \emph{(integral-)metric graph} is a graph with an (integral) metric. A graph without a metric can be thought of as an integral-metric graph where each edge has length $1$ -- when we discuss the metric on such a graph, we always mean this metric.

A metric graph $\G$ has an underlying metric space\footnote{If $\G$ is disconnected, then the distance between two points in different connected components should be taken to be $\infty$. All graphs appearing in this paper will be connected.} $|\G|$ -- we will often denote this simply $\G$ -- which is the quotient of $V(\G)\sqcup\coprod_{e\in\tilde E(\G)}[0,l(e)]$ by identifying $(e,0)\sim s(e)$ for all $e\in\tilde E(\G)$ and identifying $(e,\tau)\sim(\bar e,l(e)-\tau)$ for all $e\in\tilde E(\G)$ and $\tau\in[0,l(e)]$. Here, $(e,\tau)\in V(\G)\sqcup\coprod_{e\in\tilde E(\G)}[0,l(e)]$ denotes the element $\tau\in[0,l(e)]$ in the $e$th component of the coproduct. The metric on $|\G|$ is the length metric. An \emph{isometry} between (metric) graphs is an isometry between their underlying metric spaces.

\begin{remark}\label{rmk:metric_vs_subdivisions}
The fact that we use the same notation $l\colon E(\G)\to\N$ for a function determining a subdivision $\G^{(l)}$ and an integral metric on $\G$ is not a coincidence: the subdivision $\G^{(l)}$, viewed as a graph in which every edge has length $1$, is canonically isometric to the metric graph $(\G,l)$. The invariants we will study in this paper turn out to be isometry-invariant, and we will often calculate these invariants for the subdivision $\G^{(l)}$ by instead calculating them for the integral-metric graph $(\G,l)$, where the computations are easier. See for example the proof of corollary~\ref{cor:qualitative}.
\end{remark}

\begin{remark}\label{rmk:isometries_vs_isos}
An isometry of graphs need not necessarily map vertices to vertices. For instance, if $\G$ is a cycle, then its underlying metric space is a circle and has self-isometries (e.g.\ irrational rotations) which do not preserve the set of vertices.

This example shows that there are isometries of graphs which are not isomorphisms. However, the difference between isometries and isomorphisms is very mild. Every isomorphism is an isometry, and the converse is true between graphs which do not have a cycle as a connected component.
\end{remark}

\subsection{Graph Jacobians and semistable curves}\label{ss:semistable_dual_graphs}

A fundamental invariant of a graph $\G$ of arithmetic relevance is its \emph{Jacobian} $\gNeron_\G$, which is a finite abelian group functorially associated to $\G$. There are several standard definitions of this group, either as a quotient of the degree $0$ divisors on $\G$ by the principal divisors (see e.g.\ \cite[\S1.3]{baker-norine:riemann-roch}), or as the group of recurrent positions for the abelian sandpile model on $\G$ (see e.g.\ \cite[Corollary~2.16]{many:chip-firing}). The former definition explains the name \emph{Jacobian}, while with the latter definition it is more commonly known as the \emph{sandpile group}~\cite{levine-propp:sandpile?}.

For our purposes, it will be useful to use a third definition of the Jacobian, which is metric in nature.

\begin{definition}\label{def:intersection}
Let $\G$ be a graph, and let $\Z\cdot\tilde E(\G)$ denote the free $\Z$-module on the oriented edges of $\G$. The \emph{first homology} $\H_1(\G,\Z)$ of $\G$ is the quotient $\mathrm Z_1(\G,\Z)/\mathrm B_1(\G,\Z)$, where $\mathrm Z_1(\G,\Z)$ is the kernel of the map $\Z\cdot\tilde E(\G)\to\Z\cdot V(\G)$ sending an oriented edge $e$ to $t(e)-s(e)$, and $\mathrm B_1(\G,\Z)$ is the submodule generated by $e+\bar e$ for $e\in\tilde E(\G)$. The assignment $\G\mapsto\H_1(\G,\Z)$ is functorial\footnote{Throughout this paper, when we say that some construction on the objects of a category is ``functorial'', we have in mind a \emph{particular} way to extend it to a functor on that category.} with respect to graph isomorphisms.

Suppose now that we fix an orientation of each edge of $\G$ (a splitting $\sigma$ of the map $\tilde E(\G)\to E(\G)$). This endows the underlying topological space $|\G|$ of $\G$ with the structure of a $\Delta$-complex \cite[p103]{hatcher}. Via the induced splitting of the map $\Z\cdot\tilde E(\G)\twoheadrightarrow\Z\cdot E(\G)$, the homology $\H_1(\G,\Z)$ is isomorphic to the simplicial homology of this $\Delta$-complex \cite[p105]{hatcher}, and hence to the first singular homology $H_1(|\G|,\Z)$ of $|\G|$ \cite[Theorem~2.27]{hatcher}. One can check easily that this isomorphism $\H_1(\G,\Z)\cong\H_1(|\G|,\Z)$ is independent of the choice of orientation $\sigma$ and natural with respect to graph isomorphisms. In particular, it makes the construction $\G\mapsto\H_1(\G,\Z)$ functorial with respect to continuous maps between underlying topological spaces.

\smallskip

Suppose now that we have fixed an orientation $\sigma$ as above, and write $\Lambda:=\H_1(\G,\Z)$ for short. The lattice $\Lambda$ admits a $\Z$-valued bilinear \emph{intersection-length pairing}, given by
\[
\left\langle\sum_{e\in E(\G)}a_e\cdot e,\sum_{e\in E(\G)}b_e\cdot e\right\rangle := \sum_{e\in E(\G)}a_eb_e \,.
\]
(Informally, $\langle\gamma,\gamma'\rangle$ is the signed length of the intersection $\gamma\cap\gamma'$.) The intersection-length pairing is positive-definite, being the restriction of the pairing on $\Z\cdot E(\G)$ given by the identity matrix, and hence induces an injective map $\Lambda\hookrightarrow\Lambda^\vee=\Hom(\Lambda,\Z)$. We define the \emph{Jacobian} $\gNeron_\G:=\Lambda^\vee/\Lambda$ of $\G$ to be the cokernel of this map, which is a finite abelian group. This construction is functorial with respect to isomorphisms of graphs (i.e.\ the isomorphism $\H_1(\G,\Z)\isoarrow\H_1(\G',\Z)$ induced by an isomorphism $\G\isoarrow\G'$ of graphs is automatically compatible with the pairings), so in particular $\gNeron_\G$ carries an action of $\Aut(\G)$. If $\Frob$ is an automorphism of $\G$, we define the \emph{Tamagawa number}
\[
c_{\G,\Frob} := \#\gNeron_\G^\Frob
\]
of $(\G,\Frob)$ to be the number of $\Frob$-fixed elements in $\gNeron_\G$.
\end{definition}

\begin{metricvariant*}
The above definition of the intersection-length pairing extends naturally to metric graphs via the formula
\[
\left\langle\sum_{e\in E(\G)}a_e\cdot e,\sum_{e\in E(\G)}b_e\cdot e\right\rangle := \sum_{e\in E(\G)}a_eb_el(e) \,.
\]
This is also positive-definite, for the same reasons as above. If the metric on $\G$ is integral, then the intersection-length pairing induces an embedding $\Lambda\hookrightarrow\Lambda^\vee$, and we define the \emph{Jacobian} $\gNeron_\G:=\Lambda^\vee/\Lambda$ of $\G$ exactly as above. We will see shortly (proposition~\ref{prop:functoriality}) that the construction of $\Phi_\G$ is functorial with respect to isometries of integral-metric graphs, and hence if $\Frob$ is a self-isometry of $\G$, then we may define the Tamagawa number in exactly the same way as for non-metrised graphs above.

\end{metricvariant*}


\begin{proposition}\label{prop:functoriality}
Suppose that $f\colon\G\isoarrow\G'$ is an isometry of metric graphs. Then the induced isomorphism $f_*\colon\H_1(\G,\Z)\isoarrow\H_1(\G',\Z)$ preserves the intersection-length pairing. In particular, if the metric graphs are integral, then there is an induced isomorphism $f_*\colon\gNeron_\G\isoarrow\gNeron_{\G'}$, and if $\Frob$ and $\Frob'$ are isomorphisms of $\G$ and $\G'$ respectively which are conjugate via $f$, then $c_{\G,\Frob}=c_{\G',\Frob'}$.
\begin{proof}
The set $V(\G)$ of vertices of $\G$ is by definition a finite subset of the underlying metric space $|\G|$, whose complement is isometric to the disjoint union of the open intervals $(0,l(e))$ for (unoriented) edges $e$ of $\G$. The same is true of $\G'$. Let us say that $f$ is a \emph{subdivision} just when $f(V(\G))\subseteq V(\G')$. It is easy to check that every isometry is the composite of a subdivision followed by the inverse of a subdivision, and hence it suffices to prove the proposition when $f$ is a subdivision, which we now assume.

An oriented edge $e$ determines a locally\footnote{The embedding is not isometric, for example, when $e$ is a loop edge.} isometric embedding $\iota_e\colon(0,l(e))\hookrightarrow|\G|$ whose image is the connected component of $|\G|\setminus V(\G)$ corresponding to $e$. The intersection $V(\G')\cap\im(f\circ\iota_e)$ is a finite set of degree $2$ vertices of $\G'$, whose complement in $\im(f\circ\iota_e)$ is the disjoint union of a finite number of components of $|\G'|\setminus V(\G')$. It follows that there is a finite subset $f_*(\{e\})\subseteq\tilde E(\G')$ such that $\im(f\circ\iota_e)\setminus(V(\G')\cap\im(f\circ\iota_e))=\coprod_{e'\in f_*(\{e\})}\im(\iota_{e'})$. In particular, we see from this that $l(e)=\sum_{e'\in f_*(\{e\})}l(e')$.

Changing the orientations on the edges $e'$ if necessary, we may ensure that the orientation on each $e'\in f_*(\{e\})$ is compatible with that on $e$, in the sense that the isometric embedding $\iota_e^{-1}\circ f^{-1}\circ\iota_{e'}\colon(0,l(e'))\hookrightarrow(0,l(e))$ is order-preserving. This determines the set $f_*(\{e\})$ uniquely. One checks straightforwardly that the map $f_*\colon\Z\cdot\tilde E(\G)\to\Z\cdot\tilde E(\G')$ given by $e\mapsto f_*(e)=\sum_{e'\in f_*(\{e\})}e'$ induces the pushforward map $f_*\colon\H_1(\G,\Z)\isoarrow\H_1(\G',\Z)$ in singular homology. Verifying that this map preserves the intersection-length pairing is then simply a matter of chasing the definitions.
\end{proof}
\end{proposition}

\begin{proposition}\label{prop:jacobian_is_jacobian}
The Jacobian $\gNeron_\G$ of $\G$, as defined in definition~\ref{def:intersection}, agrees with the definition in \cite[\S1.3]{baker-norine:riemann-roch}\footnote{Technically, \cite{baker-norine:riemann-roch} assumes that $\G$ has no loop edges, though there is a natural extension of their definition to permit loop edges. In any case, both the generalised version of the definition in \cite{baker-norine:riemann-roch} and our definition in \ref{def:intersection} are unchanged if one removes all loop edges from $\G$, so it suffices to deal with this case.}. That is, let $\gNeron^\usual_\G$ denote the Jacobian as defined in \cite[\S1.3]{baker-norine:riemann-roch}, which is also functorial with respect to isomorphisms of graphs. Then there is a canonical isomorphism $\gNeron_\G\cong\gNeron^\usual_\G$ for every graph $\G$, natural with respect to graph isomorphisms.
\begin{proof}
We freely use the notation of \cite[\S1.3]{baker-norine:riemann-roch}. If we fix an orientation of the edges of $\G$, then the boundary map $\partial\colon\Z^{\oplus E(\G)}\rightarrow\Z^{\oplus V(\G)}=\Div(\G)$ has kernel $\Lambda=\H_1(\G,\Z)$ and image $\Div^0(\G)$. We thus have a diagram
\begin{center}
\begin{tikzcd}
0 \arrow{r} & \H_1(\G,\Z) \arrow{r} & \Z^{\oplus E(\G)} \arrow{r}\arrow{d}{\wr} & \Div^0(\G) \arrow{r} & 0 \\
0 \arrow{r} & \mathcal{M}(\G)/\Z \arrow{r} & \Z^{E(\G)} \arrow{r} & \H^1(\G,\Z) \arrow{r} & 0
\end{tikzcd}
\end{center}
with exact rows, where the bottom row is the dual of the top row and the central map is the isomorphism identifying the evident bases of either side. One can check that the top-left-to-bottom-right composite is the map $\H_1(\G,\Z)\hookrightarrow\H^1(\G,\Z)$ defined by the intersection-length pairing, and that the bottom-left-to-top-right composite is the Laplacian $\Delta\colon\mathcal{M}(\G)\rightarrow\Div^0(\G)$. The cokernels of these maps are, by definition, $\gNeron_\G$ and $\gNeron^\usual_\G$ respectively, so the third isomorphism theorem provides an isomorphism $\gNeron_\G\cong\gNeron^\usual_\G$. One can check straightforwardly that this isomorphism is independent of the orientations on the edges of $\G$. Since the above diagram is functorial with respect to isomorphisms of oriented graphs, it also follows that the isomorphism $\gNeron^\usual\G\cong\gNeron^\usual_\G$ is natural with respect to graph automorphisms.
\end{proof}
\end{proposition}

\begin{remark}
For our purposes, the advantage of definition~\ref{def:intersection} over the more standard one is that it behaves well with respect to subdivision of edges, or equivalently, change of metric (remark~\ref{rmk:metric_vs_subdivisions}). Suppose we are given a graph $\G_0$ and a function $l\colon E(\G_0)\rightarrow\N$, and let $\G_0^{(l)}$ denote the subdivision of $\G_0$ where each edge $e$ is replaced by a chain of $l(e)$ edges. This subdivision $\G_0^{(l)}$ is isometric to the metric graph $(\G_0,l)$ by remark~\ref{rmk:metric_vs_subdivisions}, and hence $\G_0^{(l)}$ and $(\G_0,l)$ have the same Jacobian by proposition \ref{prop:functoriality}. From definition~\ref{def:intersection} (metric version), this common Jacobian is given by a presentation
\[
0\rightarrow\H_1(\G_0,\Z)\xrightarrow{\beta_l}\H^1(\G_0,\Z)\rightarrow\gNeron_{(\G_0,l)}\rightarrow0 \,,
\]
where $\beta_l$ is the map induced by the intersection length pairing on the metric graph $(\G_0,l)$.

For fixed $\G_0$ and varying $l$, this presentation exhibits $\gNeron_{\G_0^{(l)}}=\gNeron_{(\G_0,l)}$ as the cokernel of a variable map between fixed $\Z$-modules, and this presentation turns out to be well-suited to studying the dependence of $c_{\G_0^{(l)}}$ on~$l$ (see \S\ref{ss:qualitative_proof} for example). By contrast, the standard definition of the Jacobian \cite[\S1.3]{baker-norine:riemann-roch} exhibits it as the cokernel
\[
\mcal{M}(\G_0^{(l)}) \xrightarrow\Delta \Div^0(\G_0^{(l)}) \rightarrow \gNeron^\usual_{\G_0^{(l)}} \rightarrow 0
\]
of a map whose domain and codomain also depend on $l$. This makes the latter presentation much less well-suited to studying problems where we vary~$l$ for fixed~$\G_0$.
\end{remark}

\subsubsection{Relation to geometry}

The significance of graph Jacobians in the context of arithmetic geometry is that they compute groups of geometric components of special fibres of N\'eron models of Jacobians of semistable curves $X/K$, and hence also their Tamagawa numbers. The key object which enables the passage from arithmetic geometry to combinatorics is the dual graph of the geometric special fibre of the minimal regular model of $X$.

\begin{definition}[Dual graphs]\label{def:dual_graph}
Let $X/K$ be a semistable curve of genus~$\geq1$, with minimal regular model $\mathfrak X/\O_K$. The \emph{dual graph} $\G$ of the geometric special fibre $\mathfrak X_{\overline k}$ is the  graph whose vertices are the connected components of the normalisation $\widetilde{\mathfrak X}_{\overline k}$ of $\mathfrak X_{\overline k}$, and whose oriented edges are the $\overline k$-points of $\widetilde{\mathfrak X}_{\overline k}$ lying over singular points of $\mathfrak X_{\overline k}$. If $e$ is a $\overline k$-point of $\widetilde{\mathfrak X}_{\overline k}$ lying over a singular point, then its source $s(e)$ is the connected component of $\widetilde{\mathfrak X}_{\overline k}$ containing it, and its inverse $\bar e$ is the (unique) other point of $\widetilde{\mathfrak X}_{\overline k}$ mapping to the same point in $\mathfrak X_{\overline k}$. The action of the absolute Galois group $G_K$ on $\mathfrak X_{\overline k}$ induces an action on the graph $\G$, acting on vertices and edges in the obvious way. This action is unramified, so is determined by the action of any chosen (arithmetic) Frobenius in $G_K$.
\end{definition}

\begin{remark}
Note that the graph $\G$ in definition~\ref{def:dual_graph} is the dual graph of the \emph{geometric} special fibre $\mathfrak X_{\overline k}$, as opposed to the special fibre $\mathfrak X_k$.
\end{remark}

\begin{theorem}\label{thm:tamagawa_numbers_via_graphs}
Let $X/K$ be a semistable curve of genus~$\geq1$ with minimal regular model $\mathfrak X/\O_K$, and let $\G$ denote the dual graph of the geometric special fibre $\mathfrak X_{\bar k}$. Let $\Phi_{X/K}$ denote the group-scheme of connected components of the special fibre of the N\'eron model of $\Jac(X)$. Then there is a canonical isomorphism
\[
\Phi_{X/K}(\overline k) \cong \gNeron_\G
\]
which is equivariant for the natural actions of Frobenius on either side (on the left, induced from the action on $\overline k$, and on the right, induced from the action on $\G$ from definition~\ref{def:dual_graph}). In particular, $c_{X/K}=c_{\G,\Frob}$.
\begin{proof}
Let $\gNeron^\usual_\G$ denote the group defined in \cite[\S1.3]{baker-norine:riemann-roch}, which is $\Aut(\G)$-equivariantly isomorphic to $\gNeron_\G$ by proposition~\ref{prop:jacobian_is_jacobian}. \cite[Theorem~9.6.1]{bosch-luetkebohmert-raynaud:neron_models} gives the canonical identification between the group of geometric components of the N\'eron model of $\Jac(X)$ and $\gNeron^\usual_\G$, and hence with $\gNeron_\G$. That this identification is Galois-equivariant is proved in \cite[Theorem~1.1]{bosch-liu:rational_neron}. The equality of Tamagawa numbers then follows by definition: $c_{X/K} := \#\gNeron(k) = \#\gNeron(\overline k)^\Frob$.
\end{proof}
\end{theorem}

We illustrate theorem~\ref{thm:tamagawa_numbers_via_graphs} with the following example, which also shows that the assumption that $X$ is hyperelliptic in theorem~\ref{thm:growth_in_towers} and corollary~\ref{cor:tamagawa_numbers_higher_powers} is necessary.

\begin{example}\label{ex:non-hyperelliptic}
Let $K$ be a $p$-adic number field with residue field $k$; write $k_5$ for the degree $5$ extension of $k$ and $K_5/K$ for the corresponding unramified extension of $K$. Let $X/K$ be a (smooth, projective, geometrically integral) curve with a regular semistable model\footnote{In other words, $\mathfrak X/\O_K$ is proper, flat and regular, and its special fibre $\mathfrak X_k$ is a reduced normal crossings divisor.} $\mathfrak X/\O_K$ such that the dual graph $\G$ of the geometric special fibre $\mathfrak X_{\bar k}$ is the following five-spoked wheel graph
\begin{center}
\begin{tikzpicture}
	\BlueVertices
	\Vertex[x=0.0,y=0.0,L=\relax]{o}
	\Vertex[x=0.0,y=1.5,L=\relax]{n}
	\Vertex[x=1.427,y=0.464,L=\relax]{ne}
	\Vertex[x=0.881,y=-1.214,L=\relax]{se}
	\Vertex[x=-0.881,y=-1.214,L=\relax]{sw}
	\Vertex[x=-1.427,y=0.464,L=\relax]{nw}
	
	\BlueEdges
	\Edge(o)(n)
	\Edge(o)(ne)
	\Edge(o)(se)
	\Edge(o)(sw)
	\Edge(o)(nw)
	\Edge(n)(ne)
	\Edge(ne)(se)
	\Edge(se)(sw)
	\Edge(sw)(nw)
	\Edge(nw)(n)
	
	\EArr{o}{n}{o}{ne}{in=108,out=0}
	\EArr{o}{ne}{o}{se}{in=36,out=288}
	\EArr{o}{se}{o}{sw}{in=324,out=216}
	\EArr{o}{sw}{o}{nw}{in=252,out=144}
	\EArr{o}{nw}{o}{n}{in=180,out=72}
	\VArr{n}{ne}{in=108,out=0}{1}
	\VArr{ne}{se}{in=36,out=288}{1}
	\VArr{se}{sw}{in=324,out=216}{1}
	\VArr{sw}{nw}{in=252,out=144}{1}
	\VArr{nw}{n}{in=180,out=72}{1}
\end{tikzpicture}
\end{center}
where the edge-lengths are all $1$, the vertices all have genus $0$, and the induced action of Frobenius rotates the wheel by a one-fifth turn. In other words, the normalisation of $\mathfrak X_k$ is $\P^1_k\sqcup\P^1_{k_5}$, where the copy of $\P^1_k$ meets the copy of $\P^1_{k_5}$ transversely at a point of degree $5$ over $k$, and the copy of $\P^1_{k_5}$ meets itself transversely at a (different) point of degree $5$ over $k$ such that, after base-changing from $k$ to $k_5$, each of the five components of $(\P^1_{k_5})_{k_5}\cong(\P^1_{k_5})^{\sqcup 5}$ meets its Frobenius conjugate.

The existence of such a curve $\mathfrak X/\O_K$ with special fibre $\mathfrak X_k$ is provided by the classical deformation theory of stable curves, for instance as summarised in \cite[pp.~79--81]{deligne-mumford}. The completed local ring of $\mathfrak X_k$ at each of its two singular (closed) points is isomorphic to $k_5[\![x,y]\!]/(xy)$, which admits a deformation to a regular complete local ring over $\O_K$, for instance $\O_{K_5}[\![x,y]\!]/(xy-\varpi)$ with $\varpi$ a uniformiser of $\O_K$. This determines an $\O_K$-point of the moduli space of deformations of the local rings of $\mathfrak X_k$, denoted $\mathcal M_{lo}$ in \cite{deligne-mumford}. There is also a moduli space $\mathcal M_{gl}$ of deformations of the curve $\mathfrak X_k$, i.e.\ flat projective schemes $\hat{\mathfrak X}$ over local Artin rings with residue field $k$ whose special fibre is $\mathfrak X_k$. There is a map $\mathcal M_{gl}\rightarrow\mathcal M_{lo}$ sending a formal deformation $\hat{\mathfrak X}$ of $\mathfrak X_k$ to its local rings at points of its special fibre $\mathfrak X_k$, and moreover this map $\mathcal M_{gl}\rightarrow\mathcal M_{lo}$ has a splitting \cite[Proposition~1.5]{deligne-mumford}. Thus there is a formal deformation $\hat{\mathfrak X}$ of $\mathfrak X_k$ over $\O_K$ having the prescribed local rings at the singular points of its special fibre. Since $\hat{\mathfrak X}$ is projective, it is the formal completion of a (regular) projective scheme $\mathfrak X/\O_K$ at its special fibre \cite[Th\'eor\`eme~5.4.5]{EGA3}. This is the desired curve $\mathfrak X$.



\smallskip

We can compute the Tamagawa number of $X$ over $K$ and $K_5$ using theorem~\ref{thm:tamagawa_numbers_via_graphs}, noting that $X$ has genus $5$ and $\mathfrak X$ is its minimal regular model. With respect to the basis of $\H_1(\G,\Z)$ consisting of the five small triangles, the intersection-length pairing is given by the matrix
\[
\begin{pmatrix}
3 & -1 & 0 & 0 & -1 \\
-1 & 3 & -1 & 0 & 0 \\
0 & -1 & 3 & -1 & 0 \\
0 & 0 & -1 & 3 & -1 \\
-1 & 0 & 0 & -1 & 3
\end{pmatrix}\,.
\]
The Jacobian group of $\G$ is then the cokernel of the map given by this matrix, which a Smith normal form calculation shows is $\F_{11}\oplus\F_{11}$. To determine the Frobenius action, we note that the above description presents the Jacobian as the quotient $\F_{11}[T]/(T^2-3T+1)$, where the Frobenius acts via multiplication by $T$. Factorising $T^2-3T+1$ over $\F_{11}$ thus shows that the action of Frobenius on the Jacobian group is diagonalisable, with eigenvalues $-2$ and $5$.

We see from this that the Frobenius has only one fixed point on $\gNeron_\G$, but its fifth power acts trivially. Thus $c_{X/K}=1$ and $c_{X/K_5}=121$.
\end{example}

\subsection{Hyperelliptic graphs}

Among all graphs, there is a certain subclass of \emph{hyperelliptic graphs} which exhibit similar behaviour to hyperelliptic curves \cite{baker-norine:hyperelliptic}. Although there are several equivalent ways to define these (see \cite[Theorem~5.12]{baker-norine:hyperelliptic}), we will only use one which is purely topological in nature.

\begin{definition}[cf.\ {\cite[Definition~3.2]{semistable_types}}]\label{def:hyperelliptic_graph}
A \emph{hyperelliptic graph} is a pair $(\G,\iota)$ where $\G$ is a connected graph and $\iota$ is an involution of $\G$ such that the topological quotient $\G/\iota$ (quotient of the underlying topological space) is a tree, i.e.\ contractible.
\end{definition}


\begin{remark}
Our definition of a hyperelliptic graph is slightly more general than that in \cite[Definition~3.2]{semistable_types}, in that we don't endow $\G$ with a genus function, nor do we place any requirements on the degrees of its vertices. The reason for this is simply that these extra data and conditions are irrelevant for the study of Tamagawa numbers, so we omit them to avoid overloading the notation.
\end{remark}

\subsubsection{Relation to geometry}

In fact, the similarities between hyperelliptic graphs and hyperelliptic curves are not just formal, and hyperelliptic graphs are indeed those graphs which arise as dual graphs of the reductions of semistable hyperelliptic curves.

\begin{lemma}\label{lem:hyperelliptic_reduction}
Let $X/K$ be a semistable hyperelliptic curve of genus~$g\geq1$ with minimal regular model $\mathfrak X/\O_K$, and let $\G$ denote the dual graph of the geometric special fibre $\mathfrak X_{\bar k}$. Then $\G$, with the involution $\iota$ induced from the hyperelliptic involution on $X$, is a hyperelliptic graph.
\begin{proof}
\cite[Theorem 5.18]{m2d2} proves this in the case when $p\neq2$, $g\geq2$ and $X/\iota\simeq\P^1_K$. We will sketch a proof in general using the theory of analytic geometry in the sense of Berkovich. Let $X_{\C_K}^\Berk$ denote the Berkovich analytification of $X$ over a completed algebraic closure $\C_K$ of $K$, and let $\iota$ denote the hyperelliptic involution on $X_{\C_K}^\Berk$. Formula $(\ast)$ in the proof of \cite[Proposition 3.4.6]{berkovich:analytic_geometry} describes the fibres of the map $X_{\C_K}^\Berk\twoheadrightarrow(X/\iota)_{\C_K}^\Berk\simeq\P^{1,\Berk}_{\C_K}$: if $x$ is a point of $\P^{1,\Berk}_{\C_K}$ with residue field $\mathcal H(x)$, then its fibre is the Berkovich spectrum of a finite $\mathcal H(x)$-algebra of dimension $2$ or $1$ whose $\iota$-fixed subalgebra is exactly $\mathcal H(x)$.

It follows from this description that the map $|X_{\C_K}^\Berk|\twoheadrightarrow|\P^{1,\Berk}_{\C_K}|$ between underlying topological spaces is a set-theoretic quotient by the hyperelliptic involution $\iota$ -- since the domain is compact and the codomain is Hausdorff, it is automatically a topological quotient.

Now it is well-known that the dual graph $\G$ of $\mathfrak X_{\bar k}$ embeds naturally inside $|X_{\C_K}^\Berk|$ (see e.g.\ \cite[Segment 4.9]{baker-payne-rabinoff:berkovich_curves}). The image of this embedding is $\iota$-stable by naturality, and hence the quotient $\G/\iota$ is canonically identified with the image of $\G$ under $|X_{\C_K}^\Berk|\twoheadrightarrow|\P^{1,\Berk}_{\C_K}|$. This image is a topological tree, since on the one hand it is topologically a connected graph, and on the other, being a connected subspace of $|\P^{1,\Berk}_{\C_K}|$, it is contractible by \cite[Theorem 4.2.1]{berkovich:analytic_geometry}. Thus $(\G,\iota)$ is hyperelliptic.
\end{proof}
\end{lemma}

\subsection{BY trees}

From the perspective of controlling Tamagawa numbers of hyperelliptic curves, it will be convenient to replace hyperelliptic graphs with yet-simpler combinatorial objects. The pertinent notion is that of a \emph{BY tree}, as defined in \cite{semistable_types}.

\begin{definition}[cf.\ {\cite[Definition~3.18]{semistable_types}}]\label{def:BY_tree}
A \emph{BY tree} $T=(T,S)$ consists of a tree $T$ (connected, acyclic graph) together with a non-empty subgraph $S\subseteq T$. A \emph{signed isomorphism} $\epsilon F\colon(T,S)\isoarrow(T',S')$ between two BY trees consists of a pair $(F,\epsilon)$ where $F\colon T\isoarrow T'$ is an isomorphism taking $S$ isomorphically onto $S'$, and $\epsilon$ is a function $\pi_0(T\setminus S)\rightarrow\{\pm1\}$, where $T\setminus S$ denotes the complement of $S$ in the underlying topological space of $T$. The \emph{composite} of two signed isomorphisms $\epsilon F\colon(T,S)\isoarrow(T',S')$, $\epsilon'F'\colon(T',S')\isoarrow(T'',S'')$ is the signed isomorphism $\epsilon''F''\colon(T,S)\isoarrow(T'',S'')$ where $F''=F'\circ F$ and $\epsilon''(C)=\epsilon(C)\cdot\epsilon'(F(C))$ for all connected components $C$ of $T\setminus S$.

The collection of BY trees with signed isomorphisms forms a category in which every morphism is invertible. We write $\Aut^{\pm}(T)$ for the group of signed automorphisms of a BY tree $T$.
\end{definition}

\begin{metricvariant*}
One can also define an \emph{integral-metric BY tree} to be a BY tree $T$ endowed with an integral metric $l$. One defines a (signed) \emph{isometry} of BY trees exactly as above. It is easy to check that the category of BY trees with (signed) isomorphisms is equivalent to the category of integral-metric BY trees with (signed) isometries, by endowing every non-metric BY tree with the metric whereby each edge has length~$1$.
\end{metricvariant*}

\begin{remark}
This definition of BY trees is again slightly more general than that in \cite[Definition~3.18]{semistable_types}, in that we do not endow $T$ with a genus function, nor do we place any restrictions on the degrees of vertices. Again, this is because these extra restrictions do not affect Tamagawa numbers.
\end{remark}

It turns out that the category of BY trees defined above is slightly too large, and includes BY trees that don't correspond to any hyperelliptic graph. For this reason, we introduce certain technical parity conditions on BY trees and their automorphisms, which we will feel free to use when necessary.

\begin{definition}[Parity conditions]\label{def:parity}
An automorphism $F$ of a tree $T$ either fixes a vertex or inverts an edge, but not both; we say that $F$ is \emph{even} when it fixes a vertex. If $\epsilon F$ is a signed automorphism of a BY tree $T=(T,S)$, then we say that $\epsilon F$ is even just when $F$ is even as an automorphism of $T$. We write $\Aut_\even^{\pm}(T)$ for the group of even signed automorphisms of $T$.

We say that a vertex $v$ of a BY tree $T=(T,S)$ is \emph{special} just when either:
\begin{itemize}
	\item $v$ has degree~$\geq3$;
	\item $v$ has degree~$2$, $v\in S$ and at most one of the edges incident to $v$ lies in $S$;
	\item $v$ has degree $1$ and either $v\notin S$ or both $v$ and its incident edge are in $S$; or
	\item $v$ has degree $0$ (in the degenerate case that $T$ consists of a single vertex).
\end{itemize}
We say that $T$ is \emph{even} just when all special vertices lie an even distance from one another.
\end{definition}

\begin{metricvariant*}
One can extend the definitions of evenness also to integral-metric BY trees (definition~\ref{def:BY_tree}, metric version). If $F$ is a self-isometry of an integral-metric tree $T$, then either the distance from $v$ to $F(v)$ is even for all vertices $v$, or the distance is odd for all $v$; we say that $F$ is \emph{even} in the former case.

If $T=(T,S)$ is an integral-metric BY tree, then one defines \emph{special vertices} exactly as in definition~\ref{def:parity} above, and says that $T$ is \emph{even} just when all of its special vertices lie an even distance from one another.

It is easy to check that these definitions extend the earlier definitions for non-metric trees, and are isometry-invariant (e.g.\ if two integral-metric BY trees are isometric, then one is even if and only if the other is).
\end{metricvariant*}

\begin{remark}\label{rmk:even_autos}
It is \emph{almost} true that every automorphism of an even BY tree $T$ is even, the only exception being when $T$ consists of a chain of an odd number of edges connecting the two points of $S$, and $F$ reverses this chain of edges. (To see that this is the only exception, note that an automorphism of an even BY tree preserves the set of special vertices, so is even \emph{provided that~$T$ has at least one special vertex}.)
\end{remark}

\subsubsection{Relation to hyperelliptic graphs}

As stated above, BY trees provide a convenient combinatorial framework for analysing hyperelliptic graphs. The precise relation between these two concepts is made precise in the following construction.

\begin{construction}\label{cons:hyperelliptic_graph_to_BY_tree}
Let $(\G,\iota)$ be a hyperelliptic graph. We endow the topological quotient $T:=\G/\iota$ with the structure of a BY tree as follows. Let $\G^{(2)}$ denote the graph formed by subdividing each edge of $\G$ into two edges, as in \S\ref{sss:graph_conventions}. We define $T$ to be the graph-theoretic quotient\footnote{By the \emph{graph-theoretic quotient} of a graph $\G$ by an action of a group $G$, we mean the graph whose vertices and edges are the $G$-orbits of vertices and edges in $\G$, with the obvious incidence relations. In order for this to be well-defined, we require that no element of $G$ inverts any edge of $\G$ -- this is automatic when $\G$ has a bipartition whose classes are preserved by $G$.} $\G^{(2)}/\iota$, which is canonically homeomorphic to $\G/\iota$. The ramification locus of the topological quotient map $\G\twoheadrightarrow\G/\iota\cong T$ is a subgraph $S\subseteq T$.

We now claim that the BY tree $T=(T,S)$ is even. There is a natural bipartition\footnote{A bipartition of a graph is a division of its vertex-set into two classes such that adjacent vertices lie in different classes.} $V(T)=V_0(T)\sqcup V_1(T)$ of the vertices of $T$, where $V_0(T)$ consists of those vertices of $T$ which are the image of vertices of $\G$, and $V_1(T)$ consists of those vertices which are the image of midpoints of edges of $\G$. Every special vertex $v$ lies in $V_0(T)$, for, if $v$ were the image of the midpoint of an edge $e$ of $\G$, then we would have one of three possibilities:
\begin{itemize}
	\item $e$ is inverted by the hyperelliptic involution;
	\item $e$ is sent to another edge by the hyperelliptic involution;
	\item $e$ is pointwise fixed by the hyperelliptic involution.
\end{itemize}
In the first case, $v$ would lie in $S$ and have one incident edge, which is not in $S$. In the second case, $v$ would have degree $2$ and not lie in $S$. In the third case, $v$ would have degree $2$ and both its incident edges would lie in $S$. All three are impossible if $v$ is special. Since all special vertices lie in $V_0(T)$, they lie an even distance from one another in $T$.

Now let $\phi\colon(\G,\iota)\isoarrow(\G',\iota')$ be an isometry of hyperelliptic graphs, i.e.\ an isometry of graphs compatible with the hyperelliptic involutions. Then $\phi$ induces an isometry $F\colon T\isoarrow T'$ between the corresponding BY trees. This is necessarily an isomorphism. We make this into a signed isomorphism as follows. Choose splittings $\sigma$, $\sigma'$ of the topological quotient maps $\G\twoheadrightarrow\G/\iota$, $\G'\twoheadrightarrow\G'/\iota'$. We then define the sign function $\epsilon\colon\pi_0(T\setminus S)\rightarrow\{\pm1\}$ by
\[
\epsilon(C):=\begin{cases}+1&\text{if $\sigma(F(C))=\phi(\sigma(C))$,}\\-1&\text{if $\sigma(F(C))=\iota\phi(\sigma(C))$.}\end{cases}
\]

Finally, if $\phi$ is an automorphism of $(\G,\iota)$, then the induced automorphism of the corresponding BY tree $T$ preserves each class of the bipartition $V(T)=V_0(T)\sqcup V_1(T)$ above. Since it preserves a bipartition, it cannot invert an edge, and hence is even.
\end{construction}

\begin{proposition}[cf.\ {\cite[Proposition~4.11]{semistable_types}}]\label{prop:hyperelliptic-BY_equivalence}
Construction~\ref{cons:hyperelliptic_graph_to_BY_tree} induces an equivalence of categories from hyperelliptic graphs with isometries (respecting the hyperelliptic involutions) to even BY trees with signed isomorphisms. Under this equivalence, the automorphisms of a hyperelliptic graph correspond to even automorphisms of the corresponding BY tree.
\begin{proof}
Fix a choice of splitting $\sigma$ of the quotient map $\G\twoheadrightarrow\G/\iota$ for each hyperelliptic graph $(\G,\iota)$. Construction~\ref{cons:hyperelliptic_graph_to_BY_tree} then provides a functor from hyperelliptic graphs to even BY trees. We verify that this functor is fully faithful and essentially surjective.

For faithfulness, it suffices to show that for any hyperelliptic graph $(\G,\iota)$ with associated BY tree $T$, the map $\Aut(\G)\rightarrow\Aut^{\pm}(T)$ is injective. But an element of the kernel is an automorphism $\phi$ of $\G$ which induces the identity on $T=\G/\iota$ and respects the chosen splitting $\sigma$ for $\G$; it is easy to see that this implies that $\phi$ is the identity. This proves injectivity, hence faithfulness.

For fullness, we note that any even signed isomorphism $\epsilon F$ between the BY trees $T$, $T'$ of two hyperelliptic graphs $\G$, $\G'$ factors as $\epsilon\cdot\id_T$ followed by the unsigned isomorphism $F$. It is easy to see that $\epsilon\cdot\id_T$ lifts to an automorphism of $\G$, namely the deck transformation of the ramified cover $\G\twoheadrightarrow\G/\iota=T$ which acts as the identity over each component of $T\setminus S$ where $\epsilon=1$, and as $\iota$ over each component where $\epsilon=-1$. Equally, it is easy to see that $F$ lifts to an isometry $\G\isoarrow\G'$, namely the unique isometry which is compatible with the splittings and induces the isomorphism $F$ on the quotients by the respective hyperelliptic involutions. Combining these shows that $\epsilon F$ lifts to an isomorphism $\G\isoarrow\G'$, which proves fullness.

Finally, to show essential surjectivity, we provide an inverse construction on the level of objects. Given an even BY tree $(T,S)$, we let $\G^{(2)}:=T\cup_ST$ denote the graph formed by gluing two copies of $T$ along their copies of $S$. This comes with an involution $\iota$ exchanging the two copies of $T$. Since $T$ is even, we may choose a bipartition $V(T)=V_0(T)\sqcup V_1(T)$ of its vertices such that $V_0(T)$ contains all special vertices of $T$. This induces an $\iota$-stable bipartition $V(\G^{(2)})=V_0(\G^{(2)})\sqcup V_1(\G^{(2)})$ of the vertices of $\G^{(2)}$, for which every vertex in $V_1(\G^{(2)})$ has degree~$2$. Thus $\G^{(2)}$ is the subdivision of a graph $\G$. It is easy to see that $(\G,\iota)$ is a hyperelliptic graph, and that its associated BY tree is isomorphic to $T$, proving essential surjectivity.
\end{proof}
\end{proposition}

\begin{remark}
Note that proposition~\ref{prop:hyperelliptic-BY_equivalence} describes the category of hyperelliptic graphs with isometries, rather than isomorphisms. These are not quite the same category, due to the existence of isometries which are not isomorphisms in a few edge cases. For example if $\G$ has a single vertex and a single loop edge inverted by the hyperelliptic involution $\iota$, then $(\G,\iota)$ has a self-isometry which is not an automorphism, namely the half-rotation.
\end{remark}

\begin{remark}
The equivalence of categories in proposition~\ref{prop:hyperelliptic-BY_equivalence} is non-canonical, since it depends on a choice of splitting for each hyperelliptic graph. However, this non-canonicity is relatively mild: the map on objects is independent of the choice of splittings, and if a hyperelliptic graph $\G$ corresponds to a BY tree $T$, then the isomorphism $\Aut(\G)\simeq\Aut_\even^{\pm}(T)$ is independent of the choice of splitting, up to conjugation.
\end{remark}

\subsubsection{Jacobians of BY trees}

Just as we defined the Jacobian group of a graph, we will want to define a corresponding group in the setting of BY trees, which corresponds to the original group across the equivalence of categories in proposition~\ref{prop:hyperelliptic-BY_equivalence}. The definition is very similar to that in definition~\ref{def:intersection}.

\begin{definition}[cf.\ {\cite[Definition~3.31]{semistable_types}}]\label{def:BY_intersection}
Let $T=(T,S)$ be a BY tree. We let $\Lambda_T=\H_1(T,S,\Z)$ denote the relative homology lattice. This carries an inner product given by ``intersection length in $T\setminus S$'', defined formally as in definition~\ref{def:intersection} (neglecting edges in $S$). This induces an embedding $\Lambda_T\hookrightarrow\Lambda_T^\vee$, and we call the cokernel $\Lambda_T^\vee/\Lambda_T$ the \emph{Jacobian} $\gNeron_T$ of $T$.

The group of signed automorphisms of $T$ acts on $\Lambda_T$, and hence $\gNeron_T$, in a natural way: if $\gamma$ is a relative homology class supported on the closure of a component $C$ of $T\setminus S$, then we set $\epsilon F(\gamma) := \epsilon(C)\cdot F_*(\gamma)$ for every signed automorphism $\epsilon F$. For a given signed automorphism $\epsilon F$, we define the Tamagawa number
\[
c_{T,\epsilon F} := \#\gNeron_T^{\epsilon F}
\]
to be the number of fixed points in the Jacobian. We often write $c_T$ if the signed automorphism $\epsilon F$ is clear.
\end{definition}

\begin{metricvariant*}
One defines the \emph{Jacobian} of an integral-metric BY tree $T$ in exactly the same way as above, where the intersection length pairing on $\Lambda_T$ involves the metric similarly to in definition~\ref{def:intersection}, metric version. We then define the \emph{Tamagawa number} $c_{T,\epsilon F}:=\#\gNeron_T^{\epsilon F}$ exactly as above.

It is easy to see that the group $\gNeron_T$ is functorial in signed isometries, and hence the Tamagawa number $c_{T,\epsilon F}$ is invariant under isometries (of pairs $(T,\epsilon F)$).
\end{metricvariant*}

\begin{proposition}\label{prop:hyperelliptic-BY_equivalence_jacobians}
If a hyperelliptic graph $\G$ and an even BY tree $T$ correspond under the equivalence of categories from proposition~\ref{prop:hyperelliptic-BY_equivalence}, then there is an isomorphism
\[
\gNeron_\G \simeq \gNeron_T
\]
of Jacobian groups, which is equivariant for the action of $\Aut(\G)\simeq\Aut^{\pm}_\even(T)$. In particular, if $\Frob\in\Aut(\G)$ corresponds to $\epsilon F\in\Aut^{\pm}_\even(T)$, then $c_{\G,\Frob}=c_{T,\epsilon F}$.
\begin{proof}
Let $\sigma$ be the section of $\G\twoheadrightarrow\G/\iota\cong T$ chosen in the proof of proposition~\ref{prop:hyperelliptic-BY_equivalence}. Now $\G$ is covered by $\sigma T$ and $\iota\sigma T$, with intersection $S$, so by excision $\sigma$ induces an isomorphism\[\H_1(T,S,\Z)\isoarrow\H_1(\G,\iota\sigma T,\Z)\]on relative homology. But since $\iota\sigma T$ is contractible, the exact sequence on homology of a pair provides the canonical map\[\H_1(\G,\Z)\isoarrow\H_1(\G,\iota\sigma T,\Z)\]is an isomorphism. We thus obtain an isomorphism $\H_1(T,S,\Z)\isoarrow\H_1(\G,\Z)$ by combining the above isomorphisms. In detail, this sends the class of a cycle $\gamma$ on $T$ with $\partial\gamma\subseteq S$ to the class of the cycle $\sigma\gamma-\iota\sigma\gamma$ on $\G$. It is then easy to check that this isomorphism preserves the inner product on either side and is compatible with the actions of $\Aut(\G)$ and $\Aut^{\pm}(T)$ on either side.
\end{proof}
\end{proposition}

\subsubsection{The BY tree associated to a semistable hyperelliptic curve}\label{sss:BY_tree_of_curve}

We will use the theory of BY trees to study Tamagawa numbers of semistable hyperelliptic curves. If $X/K$ is a semistable hyperelliptic curve of genus~$\geq1$, then we define the BY tree $T=(T,S)$ \emph{associated to $X/K$} to be the BY tree corresponding to the dual graph $\G$ of the geometric special fibre of the minimal regular model of $X$ under the equivalence of categories in proposition~\ref{prop:hyperelliptic-BY_equivalence} (i.e.\ $T=\G/\iota$ is the quotient of $\G$ by the hyperelliptic involution, as in construction~\ref{cons:hyperelliptic_graph_to_BY_tree}). The Galois action on $\G$ induced from the action on the geometric special fibre induces an action on $T$ by signed automorphisms. These actions are unramified, and we write $\epsilon F\in\Aut^{\pm}(T)$ for the signed automorphism of $T$ corresponding to the action of Frobenius. It follows from proposition~\ref{prop:hyperelliptic-BY_equivalence} that $T$ is automatically even, and so too is the signed automorphism $\epsilon F$. Theorem~\ref{thm:tamagawa_numbers_via_graphs} and proposition~\ref{prop:hyperelliptic-BY_equivalence_jacobians} ensure that the Tamagawa number of $X/K$ is equal to the Tamagawa number of the pair $(T,\epsilon F)$.

\begin{remark}
With appropriate conventions, the results we prove in this paper are still valid when $X/K$ has genus~$0$. Namely, we adopt the convention that its dual graph~$\G$ is a single vertex, and that its BY tree~$T$ consists of a single vertex, which lies in~$S$. The induced action of Frobenius is the identity. Since all our results are essentially trivial to prove in the genus~$0$ case (the Tamagawa number of~$X/K$ is~$1$, since $\Jac(X)=0$), we will often implicitly assume that~$X/K$ has genus~$\geq1$ in what follows.
\end{remark}

\subsubsection{Dependence of Tamagawa numbers on signs}

To conclude this section, let us prove a basic property of BY trees which will be used in the sequel.


\begin{lemma}\label{lem:product_of_signs}
Let $T$ be a BY tree and let $\epsilon F$, $\epsilon'F$ be two signed automorphisms, only differing in their sign functions. Suppose that for every $F$-orbit $C,F(C),F^2(C),\dots,F^{q-1}(C)$ in $\pi_0(T\setminus S)$ we have
\[
\prod_{r=0}^{q-1}\epsilon(F^r(C)) = \prod_{r=0}^{q-1}\epsilon(F^r(C))\,.
\]
Then $\epsilon F$ and $\epsilon'F$ are conjugate in $\Aut^{\pm}(T)$. In particular, we have $c_{T,\epsilon F}=c_{T,\epsilon'F}$.
\begin{proof}
It suffices to prove the result in the special case that
\[
\epsilon'(C)=\begin{cases}-\epsilon(C)&\text{if $C=C_0$ or $F^{-1}C_0$,}\\\epsilon(C)&\text{otherwise,}\end{cases}
\]
for some non-$F$-fixed component $C_0$ of $T\setminus S$. If we set
\[
\epsilon''(C)=\begin{cases}-\epsilon(C)&\text{if $C=C_0$,}\\\epsilon(C)&\text{otherwise,}\end{cases}
\]
then it follows that $\epsilon''\id_T\circ\epsilon F = \epsilon'F\circ\epsilon''\id_T$. Thus $\epsilon F$ and $\epsilon'F$ are conjugate. This implies that they fix the same number of points of the Jacobian $\gNeron_T$, so that $c_{T,\epsilon F}=c_{T,\epsilon'F}$ as claimed.
\end{proof}
\end{lemma}
\section{The Tamagawa number formula}

With all the preliminaries above set up, we now come to the main technical result of this paper: an explicit formula describing the Tamagawa number of a BY tree with signed isomorphism in terms of purely graph-theoretic invariants. The full statement is as follows (the proof will be given in \S\ref{ss:proof_of_formula} and some illustrative examples in \S\ref{ss:examples}).

\begin{construction}\label{cons:negative_part}
Let $(T,S)$ be a BY tree and let $\epsilon F$ be an even signed automorphism. If $v$ is a vertex lying in $T\setminus S$, then we write $q_v$ for the size of the $F$-orbit containing $v$. We write
\[
\epsilon_v := \prod_{j=0}^{q_v-1}\epsilon(F^jC)
\]
where $C$ is the component of $T\setminus S$ containing $v$. If $e$ is an edge lying in $T\setminus S$, then we define $q_e$ and $\epsilon_e$ similarly. We write $\hat S\subseteq T$ for the subgraph\footnote{$\hat S$ is indeed a subgraph, for if $v$ is the endpoint of an edge $e$ in $\hat S$, then either $v\in S$, or $\epsilon_e$ is a power of $\epsilon_v$. In the latter case, we have $\epsilon_e=-1$, hence $\epsilon_v=-1$ and so $v\in\hat S$.} consisting of $S$ together with all vertices $v$ and edges $e$ with $\epsilon_v=-1$ and $\epsilon_e=-1$, respectively.
\end{construction}

\begin{theorem}\label{thm:formula}
Let $T=(T,S)$ be an even BY tree and let $\epsilon F$ be an even signed automorphism of $T$. Write $T'\supseteq\hat S'\supseteq S'$ for the quotients of $T\supseteq\hat S\supseteq S$ by the action of $F$, respectively. Then the Tamagawa number of $T$ is given by
\[
c_T := Q\cdot\tilde c\cdot\sum_{\{e'_1,\dots,e'_r\}\in R}\,\prod_{j=1}^r\frac1{q_{e'_j}} \,,
\]
where:
\begin{itemize}
	\item $Q$ is the product of the sizes of the $F$-orbits of connected components of $\hat S$;
	\item $\tilde c:=\prod_{C\in\pi_0(\hat S'\setminus S')}\tilde c(C)$ is a product of terms $\tilde c(C)$ over connected components $C$ of $\hat S'\setminus S'$, where:
	\begin{itemize}
		\item $\tilde c(C)=2^{a-1}$ if the closure of $C$ contains $a>0$ points of $S'$ lying an even distance from a vertex of degree $\geq3$;
		\item $\tilde c(C)=\gcd(l,2)$ if the closure of $C$ consists of two points of $S'$ a distance $l$ apart;
		\item $\tilde c(C)=\gcd(b,2)$ otherwise, where $b$ is the number of points of $S'$ in the closure of $C$;
	\end{itemize}
	\item $r=\#\pi_0(\hat S')-1$ is the number of connected components of $\hat S'$, minus $1$; and
	\item $R$ is the set of unordered $r$-tuples of edges in $T'\setminus\hat S'$ whose removal disconnects the $r+1$ components of $\hat S'$ from one another (meaning that there is no path in $T'\setminus\{e'_1,\dots,e'_r\}$ whose endpoints lie in different components of~$\hat S'$).
\end{itemize}
Here, by mild abuse of notation, we write $q_{e'}$ for $q_e$ where $e$ is any lift of $e'$ to an edge of $T$ (so $q_{e'}$ is the size of the $F$-orbit of edges corresponding to $e'$).
\end{theorem}

\begin{metricvariant*}
There is also a variant of theorem~\ref{thm:formula} which works also for even integral-metric BY trees $T$ with respect to even signed automorphisms (self-isometries) $\epsilon F$, and this version is often more useful for applications (see e.g.\ the proof of corollary~\ref{cor:qualitative}). The metric version of theorem~\ref{thm:formula} says that
\[
c_T := Q\cdot\tilde c\cdot\sum_{\{e'_1,\dots,e'_r\}\in R}\,\prod_{j=1}^r\frac{l(e')}{q_{e'_j}} \,,
\]
where $l(e')=l(e)$ for any edge $e$ of $T$ lying over $e'$. Here, $Q$, $\tilde c$, $r$, $T$ and $q_{e'}$ are as defined in theorem~\ref{thm:formula} above, where $T'$ is viewed as a integral-metric graph with edge-length function $l$ (for the purpose of interpreting ``distance'' in the definition of $\tilde c$).

This metric version is easy to deduce from theorem~\ref{thm:formula} directly (by computing the Tamagawa number of the non-metric BY tree isometric to $T$), or one can easily adapt the proof we give in~\S\ref{ss:proof_of_formula}.
\end{metricvariant*}

\subsection{Consequences of the formula}\label{ss:consequences}

For the purposes of the main results of this paper, the most important consequence of theorem~\ref{thm:formula} is that it affords us a qualitative description of how Tamagawa numbers of BY trees are changed under subdivision of edges. To explain this, let us fix a BY tree $T_0=(T_0,S_0)$ and a signed automorphism $\epsilon F$ of $T_0$. We let $T_0':=T_0/F$ and $S_0'/F$ denote the quotients of $T_0$ and $S_0$ by the action of $F$, as usual, so that the vertices and edges of $T_0'$ correspond to $F$-orbits of vertices and edges in $T_0$, respectively. Given an element $l\in\N^{E(T_0')}$, thought of as an $F$-invariant function $l\colon E(T_0)\rightarrow\N$, we let $T_0^{(l)}$ denote the subdivision of $T_0$ where we replace each edge $e$ of $T_0$ with a chain of $l(e)$ edges. The signed automorphism $\epsilon F$ of $T_0$ induces a corresponding signed automorphism of each $T_0^{(l)}$.

Theorem~\ref{thm:formula} allows us to give a qualitative description of the Tamagawa numbers of each $T_0^{(l)}$, as a function of $l$. This description plays a central role in both of the main results of this paper: it implies the dependence on $e$ in Theorem~\ref{thm:growth_in_towers}; as well as the second part of Theorem~\ref{thm:degeneration}.

\begin{corollary}[to theorem~\ref{thm:formula}]\label{cor:qualitative}
Keeping notation as above, there exists a homogenous polynomial map $P\colon\Q^{E(T_0')\setminus E(S_0')}\rightarrow\Q$ and a function $s\colon \F_2^{E(T_0')\setminus E(S_0')}\rightarrow\Z$ such that the Tamagawa number of $T_0^{(l)}$ is given by
\[
c_{T_0^{(l)},\epsilon F} = 2^{s(l)}\cdot P(l)
\]
for all $l\in\N^{E(T_0')}$.
\begin{proof}[Proof (even case)]
We prove the result under the additional assumption that $T_0$ and $\epsilon F$ are even (see definition~\ref{def:parity}), and only for those values of $l$ such that $T_0^{(l)}$ is even\footnote{This implies that $\epsilon F$ is also an even automorphism of $T_0^{(l)}$ by remark~\ref{rmk:even_autos}}. Since these hypotheses are automatically satisfied for BY trees of semistable hyperelliptic curves by proposition~\ref{prop:hyperelliptic-BY_equivalence}, this already suffices for our number-theoretic applications. For a proof in the general case, see~\S\ref{ss:qualitative_proof}.

Recall from remark~\ref{rmk:metric_vs_subdivisions} that $T_0^{(l)}$ is isometric to the integral-metric BY tree $(T_0,l)$, and hence they have the same Tamagawa number. We compute the Tamagawa number of the latter using theorem~\ref{thm:formula} (metric version), finding that
\[
c_{(T_0,l)} = Q\cdot\tilde c(l)\cdot \sum_{\{e'_1,\dots,e'_r\}\in R}\prod_{j=1}^r\frac{l(e')}{q_{e'_j}} \,.
\]
Of the values appearing in this formula, $Q$, $r$, $R$ and the $q_{e'}$ are independent of $l$ (they are the same as the values in theorem~\ref{thm:formula} for $T=T_0$), while $\tilde c(l)$ is always a power of $2$, depending only on the parity of the values of $l$ on edges outside $S_0'$. This implies the result, with
\[
P(l) := Q\cdot\sum_{\{e'_1,\dots,e'_r\}\in R}\prod_{j=1}^r\frac{l(e'_j)}{q_{e'_j}} \text{  and  }s(l) := \log_2(\tilde c(l)) \,.
\]
\end{proof}
\end{corollary}

\subsection{Proof of theorem~\ref{thm:degeneration}}\label{ss:degeneration_proof}

Using corollary~\ref{cor:qualitative}, we can now prove theorem~\ref{thm:degeneration} on the behaviour of Tamagawa numbers of semistable hyperelliptic curves, using the ``cluster picture'' machinery of \cite{m2d2,semistable_types}. We recall the setup from \cite[\S1]{m2d2}. Suppose that $K$ is a finite extension of $\Q_p$ ($p\neq2$) and $X_f/K$ is a hyperelliptic curve of genus $\geq2$ given by an equation $y^2=f(x)$ ($f$ of degree $\geq5$ and squarefree). We let $\cR\subseteq\overline K$ denote the set of roots of $f$, and define a \emph{proper cluster} of $f$ \cite[Definition~1.1]{m2d2} to be a subset $\s\subseteq\cR$ of size $\geq2$ which is cut out by a $p$-adic disc, i.e.\ such that there is some $z\in\overline K$ and $d\in\Q$ such that
\begin{equation}\label{eq:clusters}
\s = \{r\in\cR \:|\: v_K(r-z)\geq d\}\,,
\end{equation}
where $v_K$ denotes the valuation on $\overline K$, normalised so that the valuation of a uniformiser of $K$ is $1$. The set of proper clusters of $f$ is partially ordered by containment, and carries a natural action of the absolute Galois group $G_K$ induced from the action on $\cR$.

One can attach to a proper cluster $\s$ a number of extra data, including:
\begin{itemize}
	\item the \emph{size} $\#\s$ of $\s$;
	\item the \emph{depth} $d_\s$ of $\s$, i.e.\ the largest value of $d$ in \eqref{eq:clusters}, or equivalently the smallest value of $v_K(r-r')$ for $r,r'\in\s$ \cite[Definition~1.1]{m2d2};
	\item for each element $\sigma\in G_K$ of the absolute Galois group of $K$, a sign $\epsilon_\s(\sigma)\in\{\pm1\}$ \cite[Definition~1.12]{m2d2}\footnote{Strictly speaking, \cite[Definition~1.12]{m2d2} only defines the signs $\epsilon_\s(\sigma)$ for certain $\s$. For our purposes, it does no harm to attach signs to all proper clusters in exactly the same way, even though some of these signs have no number-theoretic content.}. (These signs depend on certain auxiliary choices of square roots.)
\end{itemize}

These cluster data (proper clusters, containment, Galois action, sizes, depths and signs) determine a wealth of arithmetic information about the hyperelliptic curve $X_f$ with affine equation $y^2=f(x)$, including whether $X_f$ is semistable \cite[Theorem~1.8(1)]{m2d2} (this also involves the leading coefficient and splitting field of~$f$), and its BY tree \cite[Theorem~5.18]{m2d2} with signed Galois action \cite[Theorem~6.9]{m2d2}.

\smallskip

In proving theorem~\ref{thm:degeneration}, it will be convenient to also consider cluster data associated to the polynomial $f_0$. Let $\cR_0$ denote the multiset\footnote{We won't concern ourselves with exactly what is meant by a ``multiset''. For our purposes it suffices that $\cR_0$ is a set with a map $\cR_0\rightarrow\overline K$ such that the size of the fibre over some $r\in\overline K$ is the multiplicity of $r$ as a root of $f_0$.} of roots of $f_0$. As above, we define a \emph{proper cluster} of $f_0$ to be a sub-multiset $\s_0\subseteq\cR_0$ of size $\geq2$ which is cut out by a $p$-adic disc. One can define sizes, depths, Galois action and signs on proper clusters of $f_0$ just as for $f$, with the caveat that certain proper clusters may have infinite depth. Specifically, any double root $\alpha_i$ of $f_0$ corresponds to a proper cluster $\{\alpha_i,\alpha_i\}$ of size~$2$ whose depth is $\infty$. We refer to such clusters as \emph{degenerate clusters} of $f_0$.

The main technical result needed in the proof of theorem~\ref{thm:degeneration} (corollary~\ref{cor:close_polys_have_similar_clusters}) asserts that when $f$ is sufficiently close to $f_0$, then the cluster pictures of $f$ and $f_0$ are isomorphic, up to the depths of clusters. This is a consequence of the following well-known proposition, which says that $p$-adically close polynomials have $p$-adically close roots.

\begin{proposition}\label{prop:close_polys_have_close_roots}
Let $f_0\in\overline K[x]$ be a non-zero polynomial of degree $d$, and let $\varepsilon>0$. Then there is a $\delta$ (depending on $f_0$ and $\varepsilon$) such that, for all $f\in\overline K[x]$ of degree $d$ with $|f-f_0|<\delta$, $f$ and $f_0$ have the same number of roots in every closed disc $D\subseteq\overline K$ of radius $\varepsilon$, counting multiplicity. Here, $|\cdot|$ denotes the Gauss norm on $\overline K[x]$, i.e.\ $\left|\sum_{i\geq0}a_ix^i\right|=\max\{|a_i|\}$ for $|\cdot|=p^{-v_K(\cdot)}$ the norm on~$\overline K$.
\begin{proof}
Performing an appropriate change of variables, it suffices to show that if $f_0$ and $f$ are polynomials with $|f-f_0|<|f_0|$, then $f$ and $f_0$ have the same number of roots in the closed unit disc, which we denote~$n$ and~$n_0$, respectively. Indeed, if we write $f=\sum_{i\geq0}a_ix^i$, then the theory of Newton polygons implies that $n$ is equal to the greatest index $i$ such that $|a_i|=|f|$, and similarly for $f_0$. The assumption that $|f-f_0|<|f_0|$ ensures that $|f|=|f_0|$, that $|a_{n_0}|=|f_0|$, and $|a_i|<|f_0|$ for $i>n_0$. This implies that $n=n_0$, as claimed.
\end{proof}
\end{proposition}

\begin{corollary}\label{cor:close_polys_have_similar_clusters}
Let notation be as in theorem~\ref{thm:degeneration}. Then there is a $\delta>0$ such that whenever $f\in K[x]$ is squarefree of degree $d$ and $|f-f_0|<\delta$, there is a bijection
\[
\psi\colon\{\text{proper clusters of $f_0$}\} \leftrightarrow \{\text{proper clusters of $f$}\}
\]
preserving containment, Galois action, sizes and signs, as well as depths of non-degenerate clusters. The depth of a cluster of $f$ corresponding to a degenerate cluster $\{\alpha_i,\alpha_i\}$ of $f_0$ is $d_i(f)$.
\begin{proof}
Fix a positive $\varepsilon$ less than the distance between any two distinct roots of $f_0$, and choose a positive $\delta$ as in proposition~\ref{prop:close_polys_have_close_roots}. We assume moreover that $\delta<|c_{f_0}|$, where $c_{f_0}$ is the leading coefficient of $f_0$. We will show the corollary holds for this value of $\delta$.

Fix a squarefree $f$ of the same degree as~$f_0$, with $|f-f_0|<\delta$. It follows from proposition~\ref{prop:close_polys_have_close_roots} that there is a bijection $\psi\colon\cR_0\xrightarrow\sim\cR$ such that $|\psi(r_0)-r_0|\leq\varepsilon$ for all $r_0\in\cR_0$. We claim that $\psi$ induces the desired bijection on proper clusters, proceeding in several steps.

(0) \textit{$\psi$ preserves sizes and containment of subsets.} (Obvious.)


(1) \textit{If $\s_0$ is a proper cluster of $f_0$, then $\psi(\s_0)$ is a proper cluster of~$f$.}

There is a closed disc $D_0\subset\overline K$ such that $\cR_0\cap D_0=\s_0$. Since the distance between distinct roots of $f_0$ is $>\varepsilon$, we may assume without loss of generality that the radius of $D_0$ is at least $\varepsilon$. This ensures that $\psi(\s_0)=\cR\cap D_0$, which is a proper cluster of~$f$.

(2) \textit{If $\s$ is a proper cluster of $f_0$, then $\psi^{-1}(\s)$ is a proper cluster of~$f_0$.}

Again, there is a closed disc $D\subseteq\overline K$ such that $\cR\cap D=\s$. It follows from proposition~\ref{prop:close_polys_have_close_roots} that every disc containing at least three roots of $f$ has radius $>\varepsilon$, and hence we may assume without loss of generality that the radius of $D$ is at least $\varepsilon$. This ensures that $\psi^{-1}(\s)=\cR_0\cap D$, which is a proper cluster of~$f_0$.

(3) \textit{$\psi$ preserves depths of non-degenerate clusters. If $\s_0=\{\alpha_i,\alpha_i\}$ is a degenerate cluster, then the depth of $\psi(\s_0)$ is equal to $d_i(f)$.}

The second part follows by definition of $d_i$. For the first part, suppose that $\s_0$ is a non-degenerate cluster. Its depth is then $<-\log_p(\varepsilon)$, and hence the disc $D_0$ in the proof of~(1) may be taken to have radius $p^{-d_{\s_0}}$. Since $\psi(\s_0)=\cR\cap D_0$, it follows that $d_{\psi(\s_0)}\geq d_{\s_0}$. For the reverse inequality, we pick elements $r_0,r_0'\in\s_0$ such that $v_K(r_0'-r_0)=d_{\s_0}<-\log_p(\varepsilon)$. The ultrametric triangle inequality implies that $v_K(\psi(r_0')-\psi(r_0))=v_K(r_0'-r_0)=d_{\s_0}$, and hence $d_{\psi(\s_0)}\leq d_{\s_0}$ by definition.

(4) \textit{The action of $\psi$ on proper clusters is Galois-equivariant.}

If $\s_0$ is a proper cluster of $f_0$, let $D_0\subset\overline K$ denote the smallest closed disc containing $\s_0$ whose radius is $\geq\varepsilon$. It follows from the proof of (1) that $\cR_0\cap D_0=\s_0$ and $\cR\cap D_0=\psi(\s_0)$. The assignment $\s_0\mapsto D_0$ is Galois-equivariant by construction, and the function $D_0\mapsto(\cR\cap D_0)$ is clearly Galois-equivariant.

(5) \textit{The action of $\psi$ preserves the signs $\epsilon_{\s_0}(\sigma)\in\{\pm1\}$ attached to elements $\sigma$ of the Galois group \cite[Definition~1.12]{m2d2}. More precisely, for any choice of elements $\theta_{\s_0}\in\overline K$ for proper clusters $\s_0$ of $f_0$ \cite[Definition~1.12]{m2d2}, there is a corresponding choice of elements $\theta_\s$ for proper clusters $\s$ of $f$, such that with respect to these choices we have
\[
\epsilon_{\psi(\s_0)}(\sigma) = \epsilon_{\s_0}(\sigma)
\]
for all proper clusters $\s_0$ of $f_0$ and all elements $\sigma\in G_K$.}

Fix a choice of centre $z_{\s_0}\in\overline K$ for each proper cluster $\s_0$ of $f_0$ \cite[Definition~1.9]{m2d2}. It follows from the above discussion that $z_{\s_0}$ is also a centre of $\psi(\s_0)$. Our assumptions ensure that $|c_f-c_{f_0}|<|c_{f_0}|$ (where $c_f$ is the leading coefficient of $f$) and $|r_0-\psi(r_0)|<|z_{\s_0}-r_0|$ for every $r_0\in\cR_0\setminus\s_0$, and hence the quantity
\[
\Delta_{\s_0} := \frac{c_f\cdot\prod_{r\in\cR\setminus\psi(\s_0)}(z_{\s_0}-r)}{c_{f_0}\cdot\prod_{r_0\in\cR_0\setminus\s_0}(z_{\s_0}-r_0)} \in 1+\mathfrak m_{\overline K}
\]
is a principal unit. In particular, $\Delta_{\s_0}$ has a canonical square root which is also a principal unit, and we define $\theta_{\psi(\s_0)}:=\sqrt{\Delta_{\s_0}}\cdot\theta_{\s_0}\in\overline K$.

Using these choices of $\theta_\s$ in the definition of the signs $\epsilon_\s(\sigma)$, we find that
\[
\epsilon_{\psi(\s_0)}(\sigma)\equiv\frac{\sigma(\theta_{\psi(\s_0)^*})}{\theta_{\sigma\psi(\s_0)^*}} = \frac{\sigma(\sqrt{\Delta_{\s_0^*}})}{\sqrt{\Delta_{\sigma(\s_0^*)}}}\cdot\frac{\sigma(\theta_{\s_0^*})}{\theta_{\sigma(\s_0)^*}} \equiv \epsilon_{\s_0}(\sigma) \,,
\]
where all congruences are modulo the maximal ideal $\mathfrak m_{\overline K}$.
\end{proof}
\end{corollary}

The ``cluster picture'' machinery of \cite{m2d2,semistable_types} allows us to translate corollary~\ref{cor:close_polys_have_similar_clusters} into a corresponding result on BY trees. In order to state this, let $\Sigma_0$ denote the set of all proper clusters of $f_0$, along with all singletons $\{r\}$ for $r\in\cR_0$. This is a \emph{cluster picture} in the sense of~\cite[Definition~D.1]{m2d2}\cite[Definition~3.33]{semistable_types}. The Galois action on proper clusters and the signs $\epsilon_{\s_0}(\cdot)$ determine an action of $G_K$ on $\Sigma_0$ by \emph{(signed) automorphisms} \cite[Definition~D.4]{m2d2}\cite[Definition~3.41]{semistable_types}. The depths of proper clusters determines a \emph{metric} on $\Sigma_0$ in the sense of \cite[Definition~3.45]{semistable_types}, except that the metric is valued in $\R_{>0}\cup\{\infty\}$ rather than $\R_{>0}$.

The construction in \cite[Definition~D.6]{m2d2} explains how to construct an integral-metric BY tree~$T_0=(T_0,S_0)$ with a signed Galois action from the cluster picture~$\Sigma_0$. Roughly speaking, the vertices of~$T_0$ are the proper clusters, each proper cluster except $\cR_0$ has an edge connecting it to the smallest cluster strictly containing it, the parities of the sizes of clusters determines the subgraph $S_0$, and the relative depths of clusters determine the edge-lengths. In particular, $T_0$ has one edge~$e_i$ of infinite length for each double-root $\alpha_i$ of $f_0$. The Galois action on~$T_0$ is the natural one induced by the action on clusters, and is upgraded to a signed action using the signs $\epsilon_{\s_0}(\sigma)$.

\begin{corollary}\label{cor:close_polys_have_similar_BY_trees}
Let notation be as in theorem~\ref{thm:degeneration}, and let $T_0$ be the integral-metric BY tree produced from~$f_0$ as above.

Then there is a $\delta>0$ with the following property. Whenever $f\in K[x]$ is squarefree of degree $d$ with $|f-f_0|<\delta$ and $X_f$ semistable, the BY tree associated to $X_f$ (see \S\ref{sss:BY_tree_of_curve}) is $G_K$-equivariantly isomorphic to the subdivision of $T_0$ formed by replacing each edge~$e$ of finite length with a chain of $l(e)$ edges, and replacing the edge $e_i$ corresponding to a double root $\alpha_i$ with a chain of $2\left(d_i(f)-a_i\right)$ edges, where $a_i$ is the depth of the smallest cluster of $f_0$ strictly containing $\{\alpha_i,\alpha_i\}$.
\begin{proof}
We take the same value of $\delta$ as in corollary \ref{cor:close_polys_have_similar_clusters}. Let us write $\Sigma_f$ for the cluster picture of $f$; that is, the set of all proper clusters of~$f$, along with the singletons $\{r\}$ for $r\in\cR$. Let $T_f$ denote the integral-metric BY tree produced from $\Sigma_f$ via the construction in \cite[Definition~D.4]{m2d2}, which comes with a signed $G_K$-action induced from the signed $G_K$-action on $\Sigma_f$. It follows from \cite[Theorems~5.18~\&~6.9]{m2d2} that $T_f$ is $G_K$-equivariantly isometric to the BY tree of $X_f$ (cf.\ \cite[Definition~D.9]{m2d2} and proposition~\ref{cons:hyperelliptic_graph_to_BY_tree}). Yet it follows from the construction and corollary~\ref{cor:close_polys_have_similar_clusters} that $T_f$ is $G_K$-equivariantly isometric to the claimed subdivision of $T_0$.
\end{proof}
\end{corollary}

\begin{proof}[Proof of theorem~\ref{thm:degeneration}]
We take the same value of $\delta$ as in corollary~\ref{cor:close_polys_have_similar_clusters}.

\eqref{thmpart:degeneration_semistability}: We use the semistability criterion \cite[Theorem~1.8(1)]{m2d2}. Part~(1) of the criterion is automatically satisfied, since the action of the inertia group $I_K$ fixes all roots of $f_0$ by assumption, and hence acts on the roots of $f$ with order $\leq2$. Part~(2) of the criterion only depends on the inertia action on clusters of size $\geq3$, and hence is satisfied for $f$ if and only if it is satisfied for $f_0$. Likewise, part~(3) of the criterion only depends on the valuation of the leading coefficient of $f$ and the depths of proper clusters, and hence is satisfied for $f$ if and only if it is satisfied for $f_0$. In summary, we see that $X_f$ is semistable if and only if $f_0$ satisfies parts~(2) and~(3) of the semistability criterion. In particular, whether or not $X_f$ is semistable depends only on $f_0$, not $f$.

\eqref{thmpart:degeneration_formula}: This follows from corollary~\ref{cor:close_polys_have_similar_BY_trees} and corollary~\ref{cor:qualitative}.
\end{proof}

\subsection{Proof of the formula}
\label{ss:proof_of_formula}

We now prove theorem~\ref{thm:formula}.

\subsubsection{Reduction to simple BY trees}\label{ss:reduction}

Our first step in the proof is to reduce to the case when the BY tree in question is of a particularly simple form. Fix for this section a BY tree $(T,S)$ with an automorphism $\epsilon F$. Let $I$ denote the set of $F$-orbits in $\pi_0(T\setminus S)$, and write $q_i$ for the size of the $i$th orbit. For each $i$, we choose a representative $C_i$ in the $i$th $F$-orbit, and write $T_i$ for the closure of $C_i$ in $T$ and $S_i:=T_i\cap S$. The pair $T_i=(T_i,S_i)$ is a BY tree, and $(\epsilon F)^{q_i}$ restricts to a signed automorphism of $T_i$. The following basic facts are easy to see.

\begin{proposition}\label{prop:basic_reduction}
For each $i\in I$, the pair $T_i=(T_i,S_i)$ is a BY tree, and $(\epsilon F)^{q_i}$ restricts to a signed automorphism of $T_i$. The subgraph $S_i$ is a non-empty set of degree~$1$ vertices of $T_i$.

If $T$ is even, then so is each $T_i$; if $\epsilon F$ is even then so is the restriction of $(\epsilon F)^{q_i}$ to $T_i$ for each $i$.
\end{proposition}

Our strategy is to reduce the proof of theorem~\ref{thm:formula} for $T$ to the corresponding results for each $T_i$. This is done by showing that the Tamagawa number of $c_T$ factorises as the product of the Tamagawa numbers of the $T_i$, as follows.

\begin{lemma}\label{lem:reduction_LHS}
Keeping notation as above, there is an $\epsilon F$-equivariant isomorphism
\[
\gNeron_T\simeq\bigoplus_{i\in I}\Ind_{(\epsilon F)^{q_i}}^{\epsilon F}\gNeron_{T_i} \,,
\]
where $\Ind_{(\epsilon F)^{q_i}}^{\epsilon F}\gNeron_{T_i} := \bigoplus_{j=0}^{q_i-1}(\epsilon F)^j\gNeron_{T_i}$ with the evident action of $\epsilon F$ induced from the action of $(\epsilon F)^{q_i}$ on $\gNeron_{T_i}$. In particular, taking $\epsilon F$-invariants of either side, we have
\[
c_{T,\epsilon F} = \prod_{i\in I}c_{T_i,(\epsilon F)^{q_i}} \,.
\]
\begin{proof}
By Mayer--Vietoris and excision, the inclusions $(T_i,S_i)\hookrightarrow(T,S)$ induce an isomorphism
\[
\H_1(T,S,\Z) \cong \bigoplus_{i\in I}\bigoplus_{j=0}^{q_i-1}\H_1(F^jT_i,F^jS_i,\Z)
\]
in relative homology, which is an orthogonal direct sum with respect to the intersection-length pairings on the groups involved. It follows that the map $\H_1(T,S,\Z)\hookrightarrow\H^1(T,S,\Z)$ induced by the intersection-length pairing is the direct sum of the corresponding maps $\H_1(F^jT_i,F^jS_i,\Z)\hookrightarrow\H^1(F^jT_i,F^jS_i,\Z)$, and hence we have an isomorphism $\gNeron_T\cong\bigoplus_{i\in I}\bigoplus_{j=0}^{q_i-1}\gNeron_{F^jT_i}$. Under the obvious identification $\Ind_{(\epsilon F)^{q_i}}^{\epsilon F}\gNeron_{T_i}\cong\bigoplus_{j=0}^{q_i-1}\gNeron_{F^jT_i}$, this yields the desired isomorphism, which is easily seen to be $\epsilon F$-equivariant.
\end{proof}
\end{lemma}

To complete the reduction step, we need to prove a corresponding factorisation result for the right-hand side in theorem~\ref{thm:formula}.

\begin{proposition}\label{prop:reduction_RHS}
Let $T=(T,S)$ be a BY tree and let $\epsilon F$ be an even signed automorphism. Define
\begin{align*}
c'_T &:= Q\cdot\tilde c\cdot\sum_{\{e'_1,\dots,e'_r\}\in R}\,\prod_{j=1}^r\frac1{q_{e'_j}} \,, \\
c'_{T_i} &:= Q_i\cdot\tilde c_i\cdot\sum_{\{e'_1,\dots,e'_r\}\in R_i}\,\prod_{j=1}^{r_i}\frac1{q_{i,e'_j}} \,,
\end{align*}
where $Q$, $\tilde c$, $r$, $R$ and $q_{e'}$ denote the quantities defined in theorem~\ref{thm:formula} for $(T,\epsilon F)$, and $Q_i$, $\tilde c_i$, $r_i$, $R_i$ and $q_{i,e'}$ denote the corresponding quantities for $(T_i,(\epsilon F)^{q_i})$. Then we have
\[
c_T' = \prod_ic_{T_i}' \,.
\]
\begin{proof}
Let $T'\supseteq\hat S'\supseteq S'$ denote the graphs defined in construction~\ref{cons:negative_part} for $T$, and $T_i'\supseteq\hat S_i'\supseteq S_i'$ the corresponding graphs for $T_i$. The connected components of $T'\setminus S'$ are in canonical bijection with the $F$-orbits of connected components of $T\setminus S$, and the closures of these components are canonically identified with the trees $T_i'$. Under this identification, we have $S_i'=T_i'\cap S'$ and $\hat S_i'=T_i'\cap\hat S'$.

We will prove the following five assertions, which together imply the result:
\begin{enumerate}
	\item\label{pfpart:Q} $Q=\prod_i\left(Q_i\cdot q_i^{r_i}\right)$;
	\item\label{pfpart:c} $\tilde c=\prod_i\tilde c_i$;
	\item\label{pfpart:r} $r=\sum_ir_i$;
	\item\label{pfpart:R} the map $\prod_iR_i\rightarrow R$ sending a choice of an $r_i$-tuple of edges in $T_i'\setminus S_i'$ for each $i$ to their union is bijective; and
	\item\label{pfpart:q} if $e'$ is an edge in $T_i'\setminus S_i'$, then $q_{e'}=q_i\cdot q_{i,e'}$.
\end{enumerate}

Of these,~\eqref{pfpart:c} and~\eqref{pfpart:q} are easy to see, and~\eqref{pfpart:R} follows once we know~\eqref{pfpart:r}. For instance, for the proof of~\eqref{pfpart:R}, we note that removing an $r$-tuple $\{e'_1,\dots,e'_r\}$ of edges in $T'\setminus\hat S'$ disconnects the components of $\hat S'$ from one another if and only if, for each $i$, removing those edges which lie in $T_i'$ disconnects the components of $\hat S_i'$ from one another. Since $r=\sum_ir_i$ by~\eqref{pfpart:r}, this is only possible if exactly $r_i$ of the edges $e_1,\dots,e_r$ lie in $T_i'$ for each $i$, and hence the map $\prod_iR_i\rightarrow R$ is bijective.

\smallskip

For the remaining two parts, we fix an $F$-fixed vertex $v_0$ of $T$, and for each $i$ let $v_{i,0}$ denote the vertex of $T_i$ closest to $v_0$.

\textit{Claim:} Let $C$ be a component of $\hat S$ not containing $v_0$. Then there is a unique index $i$ and a unique $0\leq j<q_i$ such that $C$ meets $F^jT_i$ but doesn't contain $F^jv_{i,0}$.

\textit{Proof of claim:} Let $v$ be the vertex of $C$ closest to $v_0$, and let $e$ denote the first edge on the shortest path from $v$ to $v_0$. The edge $e$ is not contained in $\hat S$, and so lies in $F^jT_i$ for some $i$ and some $0\leq j<q_i$. It is easy to check that $C$ meets $F^jT_i$ but doesn't contain $F^jv_{i,0}$, and that $i$ and $j$ are unique with this property.

\smallskip

Using this claim, we find that there is an $F$-equivariant bijection
\begin{equation}\label{eq:counting_argument}\tag{$\dag$}
\pi_0(\hat S)^\circ\cong\coprod_i\coprod_{j=0}^{q_i-1}\pi_0(F^j\hat S_i)^\circ \\,
\end{equation}
where $\pi_0(\hat S)^\circ$ and $\pi_0(\hat S_i)^\circ$ denote the set of all components of $\hat S$ and $\hat S_i$ which do not contain $v_0$ and $v_{i,0}$, respectively. The bijection~\eqref{eq:counting_argument} sends a component $C\in\pi_0(\hat S)^\circ$ to $C\cap F^jT_i\in\pi_0(F^j\hat S_i)^\circ$, where $i$ and $j$ are as in the claim.

Now there are two cases to consider. If $v_0\in\hat S$, then we have $\pi_0(\hat S)^\circ=\pi_0(\hat S)\setminus\{*\}$ and $\pi_0(\hat S_i)^\circ=\pi_0(\hat S_i)\setminus\{*\}$ for all $i$. Thus, taking the number of $F$-orbits on either side of~\eqref{eq:counting_argument} shows that $r=\sum_ir_i$, and taking the product of the sizes of the $F$-orbits gives $Q=\prod_i\left(Q_i\cdot q_i^{r_i}\right)$, proving~\eqref{pfpart:r} and~\eqref{pfpart:Q} in this case.

If instead $v_0\notin\hat S$, then there is a unique index $i_0$ such that $v_0\in T_i$, and we have $q_{i_0}=1$. In this case we have $\pi_0(\hat S)^\circ=\pi_0(\hat S)$ and $\pi_0(\hat S_{i_0})^\circ=\pi_0(\hat S_{i_0})$, and $\pi_0(\hat S_i)^\circ=\pi_0(\hat S_i)\setminus\{*\}$ for all other $i$. Again, taking the number of $F$-orbits and the product of the sizes of the $F$-orbits on either side of~\eqref{eq:counting_argument} gives~\eqref{pfpart:r} and~\eqref{pfpart:Q}, which completes the proof.
\end{proof}
\end{proposition}

\begin{corollary}\label{cor:reduce_to_simple}
Let us say that a BY tree $T=(T,S)$ is \emph{simple} just when $S$ is a non-empty set of degree~$1$ vertices of $T$ (with no edges). If theorem \ref{thm:formula} is true for simple even BY trees and even automorphisms, then it is true in general (for even BY trees and even automorphisms).
\begin{proof}
This follows from lemma~\ref{lem:reduction_LHS} and proposition~\ref{prop:reduction_RHS}, recalling that if $T$ and $\epsilon F$ are even, then so too are each $T_i$ and $(\epsilon F)^{q_i}$, and each $T_i$ is simple (proposition~\ref{prop:basic_reduction}).
\end{proof}
\end{corollary}

\subsubsection{Simple BY trees: positive case}\label{ss:positive}

Having reduced the proof of theorem~\ref{thm:formula} to the case of simple even BY trees and even automorphisms (see corollary~\ref{cor:reduce_to_simple}), we now proceed to prove it in the case when $T$ is simple and the signed automorphism $\epsilon F=+F$ is even with sign $\epsilon=+1$ everywhere. For technical reasons, we do not assume that $T$ is even (see remark~\ref{rmk:sometimes_not_even}).

Our proof of theorem~\ref{thm:formula} for this $T$ will be largely cohomological, for which we adopt the following notation.

\begin{notation}
If $M$ is a $\Z$-module on which an automorphism $\sigma$ (usually $\pm F$) acts with finite order, we will denote by $\H^j(\sigma,M)$ the continuous Galois cohomology of the continuous action of the profinite cyclic group $\hat\Z$ on the discrete group $M$ where a generator acts by $\sigma$.
\end{notation}

\begin{remark}
One could prove all the main results of this paper using cohomology of finite cyclic groups in place of the continuous cohomology of the profinite cyclic group $\hat\Z$. However, using continuous cohomology of $\hat\Z$ allows us to avoid a considerable amount of bookkeeping, in that we don't have to keep track of exactly which cyclic group was acting on each object. Indeed, cyclic group cohomology in general depends on which cyclic group is acting: if $M$ is a $\Z$-module with an automorphism $\sigma$ of finite order $d$, then $M$ can be thought of as carrying an action of the cyclic group $\Z/rd\Z$ for any $r\in\N$, but the cohomology $\H^1(\Z/rd\Z,M)$ does depend on $r$ in general. For example, if the action of $\sigma$ is trivial ($d=1$), then $\H^1(\Z/r\Z,M)=M[r]$ is the $r$-torsion in $M$. Thus, continuous cohomology of $\hat\Z$ is the ``natural'' cohomology theory on the category of $\Z$-modules with finite-order automorphisms.
\end{remark}

Now fix a simple BY tree $T=(T,S)$, and let $F=+F$ be an even automorphism of $T$. We do not assume that $T$ is even. Note that, in the notation of construction~\ref{cons:negative_part}, we have $\hat S=S$, and in particular the term $\tilde c$ in theorem~\ref{thm:formula} is always~$1$ (the empty product).

Let $\Lambda=\H_1(T,S,\Z)$ denote the relative homology lattice, so that we have an exact sequence
\[
0\rightarrow\Lambda\rightarrow\Lambda^\vee\rightarrow\gNeron\rightarrow0
\]
with $\gNeron$ the Jacobian of $T$. Taking cohomology gives an exact sequence
\[
0\rightarrow\Lambda^F\rightarrow(\Lambda^\vee)^F\rightarrow\gNeron^F\rightarrow\H^1(+F,\Lambda)\rightarrow\H^1(+F,\Lambda^\vee)\,,
\]
so that
\begin{equation}\label{eq:positive_c}
c_T = \#\coker\left(\Lambda^F\hookrightarrow(\Lambda^\vee)^F\right)\cdot\#\ker\left(\H^1(+F,\Lambda)\rightarrow\H^1(+F,\Lambda^\vee)\right)\,.
\end{equation}
We calculate these two terms separately, in the following two propositions.

\begin{proposition}\label{prop:positive_ker}
Keep notation as above, and let $m$ be the greatest common divisor of the sizes of the $F$-orbits in $S$. Then $\H^1(F,\Lambda)$ is cyclic of order $m$, and the map\[\H^1(F,\Lambda)\rightarrow\H^1(F,\Lambda^\vee)\]induced by the intersection length pairing is the zero map. In particular,
\[
\#\ker\left(\H^1(+F,\Lambda)\rightarrow\H^1(+F,\Lambda^\vee)\right)=m\,.
\]
\begin{proof}
First, note that the exact sequence on homology of a pair gives an exact sequence
\begin{equation}\label{eq:LES_pair}
0\rightarrow\Lambda\rightarrow\Z[S]\rightarrow\Z\rightarrow0\,,
\end{equation}
where $\Z[S]$ is the free $\Z$-module on $S$ and the right-hand map is the sum-of-coordinates map. Taking $F$-fixed points, we obtain an exact sequence\[\Z[S]^F\rightarrow\Z\rightarrow\H^1(+F,\Lambda)\rightarrow\H^1(+F,\Z[S]),\]where the right-hand group vanishes by Shapiro's lemma. $\Z[S]^F$ is generated by the sums of elements in each $F$-orbit $\omega$, which maps to $\#\omega$ in $\Z$, and hence we see that $\H^1(+F,\Lambda)$ is cyclic of order $m$, generated by the cocycle associated to $(1-F)y$ for any choice of $y\in S$.

It remains to show that this maps to zero in $\H^1(+F,\Lambda^\vee)$. To do this, pick by any $F$-fixed vertex $y_0$ of $T$, and let $\alpha\in\Lambda^\vee=\Hom(\H_1(T,S,\Z),\Z)$ be the map given by length of intersection with the path from $y_0$ to $y$. Then the image of $(1-F)y$ in $\Lambda^\vee$ is given by intersection length with the path from $Fy$ to $y$, and hence is $(1-F)\alpha$. In other words, the cocycle associated to $(1-F)y$ maps to a coboundary in $\Lambda^\vee$, as desired.
\end{proof}
\end{proposition}

\begin{proposition}\label{prop:positive_coker}
Keep notation as above, and let $m$ and $Q$ be the greatest common divisor and product of the sizes of the $F$-orbits in $S$, respectively. Let $T'\supseteq S'$ be the quotient of $T\supseteq S$ by the action of $F$ as in theorem~\ref{thm:formula}, and endow $T'$ with the metric whereby an edge $e'$ of $T'$ has length $l(e')=1/q_{e'}$. Let $\Lambda'=\H_1(T',S',\Z)$ denote the relative homology lattice of $T'$ relative to $S'$, and write $\langle\cdot,\cdot\rangle'$ for the intersection length pairing on $\Lambda'$. Then\[\#\coker\left(\Lambda^F\hookrightarrow(\Lambda^\vee)^F\right)=\frac Qm\cdot\disc\left(\langle\cdot,\cdot\rangle'\right).\]
\begin{proof}
Let $\rho\colon\Lambda^F\rightarrow\Lambda'$ denote the pushforward map on relative homology induced by the quotient map $(T,S)\rightarrow(T',S')$, and let $\E\colon\Lambda'\rightarrow\Lambda^F\otimes\R$ be the map which takes (the class of) a path in $T'$ to the average of the paths in $T$ lying above it.

Now $\E$ and $\rho$ are adjoint. To see this, observe that $\rho$ and $\E$ naturally extend to all formal sums of oriented edges (not just those with zero boundary), so we need only check that $\langle e,\E e'\rangle=\langle \rho e,e'\rangle'$ for edges $\tilde e$ and $e$ of $T$ and $T'$, respectively. Since $e'$ has $q_{e'}$ preimages in $T$, we have $\langle e,\E e'\rangle=1/q_{e'}=l(e')=\langle \rho e,e'\rangle'$ if $e$ is one of these preimages, and is equal to $0$ otherwise, so that $\E$ and $\rho$ are adjoint as claimed.

It now follows from adjointness that we have a commuting square
\begin{center}
\begin{tikzcd}
\Lambda'\otimes\R \arrow{r}{\E}\arrow{d} & \Lambda^F\otimes\R \arrow{d} \\
(\Lambda')^\vee\otimes\R \arrow{r}{\rho^*} & (\Lambda^\vee)^F\otimes\R.
\end{tikzcd}
\end{center}
All the vector spaces involved are equidimensional and have specified full-rank sublattices, which determine volume forms on each vector space (up to sign), and hence we may talk about the absolute determinant of each of these maps. The leftmost vertical map has determinant $\disc\left(\langle\cdot,\cdot\rangle'\right)>0$ by definition, so by taking determinants both ways around the square we find that\[\#\coker\left(\Lambda^F\hookrightarrow(\Lambda^\vee)^F\right)=\frac{|\det\rho^*|}{|\det\E|}\cdot\disc\left(\langle\cdot,\cdot\rangle'\right).\]

To complete the proof, it suffices to compute $|\det\E|$ and $|\det\rho^*|$. To perform the computation of $|\det\rho^*|$, we note that since $\H^1(+F,\Z)=0$, the exact sequence (dual to sequence~\eqref{eq:LES_pair})\[0\rightarrow\Z\rightarrow\Z^S\rightarrow\Lambda^\vee\rightarrow0\]remains exact when we take $F$-fixed points, and so identifies $(\Lambda^\vee)^F$ as the lattice of $F$-invariant $\Z$-valued functions on $S$, modulo constants. Yet by the same reasoning $(\Lambda')^\vee$ is the lattice of $\Z$-valued functions on $S'$, modulo constants, so that $\rho^*\colon(\Lambda')^\vee\rightarrow(\Lambda^\vee)^F$ is an isomorphism of lattices. It follows that $|\det\rho^*|=1$.

To perform the computation of $|\det\E|$, note that the proof of proposition~\ref{prop:positive_ker} shows that the image of the sum-of-coordinates map $\Z[S]^F\rightarrow\Z$ had image $m\Z$. Thus we have a commuting diagram with exact rows
\begin{center}
\begin{tikzcd}
0 \arrow{r} & \Lambda'\otimes\R \arrow{r}\arrow{d}{\E} & \Z[S']\otimes\R \arrow{r}\arrow{d}{\E} & \Z\otimes\R \arrow{r}\arrow{d} & 0 \\
0 \arrow{r} & \Lambda^F\otimes\R \arrow{r} & \Z[S]^F\otimes\R \arrow{r} & m\Z\otimes\R \arrow{r} & 0.
\end{tikzcd}
\end{center}
Again, each of these vertical maps goes between equidimensional vector spaces with specified full-rank sublattices, so they have well-defined absolute determinants, and the absolute determinant of the central map is the product of those of the outer two maps.

Yet if we let $y_i$ denote a collection of representatives for the $F$-orbits on $S$ and $q_i$ their sizes, it follows that $\rho y_i$ is a basis for $\Z[S']$ and $(1+F+\dots+F^{q_i-1})y_i=q_i\E\rho y_i$ is a basis for $\Z[S]^F$. Hence the central map has absolute determinant $\prod_iq_i^{-1}=Q^{-1}$.

Moreover the rightmost map clearly has absolute determinant $m^{-1}$, so that the absolute determinant of the leftmost map is $|\det\E|=\frac mQ$. Combining this with the computed value of $|\det\rho^*|=1$ and the above formula, this yields the desired result.
\end{proof}
\end{proposition}

In light of the above two propositions and~\eqref{eq:positive_c}, to prove theorem~\ref{thm:formula} for $T$ it remains to characterise the discriminant $\disc(\langle\cdot,\cdot\rangle')$. This is an analogue of the matrix-tree theorem, and is as follows.

\begin{lemma}\label{lem:discriminant}
Let $T$ be a metric tree with a set $S$ of $r+1$ marked points, and let $\langle\cdot,\cdot\rangle\colon\Lambda\otimes\Lambda\rightarrow\R$ denote the intersection-length pairing on $\Lambda=\H_1(T,S,\Z)$. Then\[\disc\left(\langle\cdot,\cdot\rangle\right)=\sum_{e_1,\dots,e_r}\prod_{j=1}^rl(e_j),\]where the sum is taken over all unordered $r$-tuples of edges of $T$ whose removal disconnects the $r+1$ points of $S$ from one another.
\begin{proof}
Note that, for any basis of $\Lambda$, the entries of the pairing matrix with respect to this basis are homogenous linear forms in the edge lengths of $T$, so that $\disc\left(\langle\cdot,\cdot\rangle\right)$ is a degree $r$ homogenous form in the edge lengths. We will find its coefficients by setting the edge lengths of $T$ to suitably chosen values. In doing so, it will be convenient for us to permit ourselves to set certain edge-lengths to $0$, which may cause the intersection-length pairing to become indefinite.

Suppose first that $E$ is a set of edges of $T$ whose removal does not disconnect the points of $S$ from one another (this is certainly the case if $|E|<r$). Let us set the lengths of all edges not in $E$ to $0$, and let those in $E$ be arbitrary. By assumption, there is a path between two points of $S$ not meeting $E$, and this path pairs to $0$ with any other element of $\H_1(T,S,\Z)$. Hence the pairing on $\Lambda$ is degenerate, so its discriminant is $0$ independently of the lengths of the edges in $E$. It follows that $\disc\left(\langle\cdot,\cdot\rangle\right)$ does not contain any monomials only in edge lengths from $E$.

Thus we have shown that the only possible monomials that can appear in $\disc\left(\langle\cdot,\cdot\rangle\right)$ are products $l(e_1)\dots l(e_r)$ where $e_1,\dots,e_r$ are distinct edges whose removal disconnects the points of $S$. It remains to show that each of these monomials has coefficient $1$.

To do this, set the lengths of $e_1,\dots,e_r$ to $1$ and all other edge lengths to $0$. If we contract out all the edges of length $0$, this does not make any of the points of $S$ collide (by assumption), and moreover does not affect the pairing on homology, so we may assume for this that $T$ is a tree with $r$ edges, all of length $1$. But this means that $T$ only has $r+1$ vertices in total, so that $S$ consists of all vertices of $T$. We can then choose a basis of $\Lambda=\H_1(T,S,\Z)$ consisting of oriented edges, and with respect to this basis the intersection length pairing is represented by the identity matrix. It follows that the discriminant of the pairing is $1$, which is what we wanted to show.
\end{proof}
\end{lemma}

\begin{corollary}\label{cor:formula_positive}
Suppose that $T=(T,S)$ is a BY tree and $F=+F$ is an even automorphism, viewed as a signed automorphism with sign $+1$ everywhere (we do not assume that $T$ is even). Then the conclusion of theorem~\ref{thm:formula} holds for $T$.
\begin{proof}
When $T$ is simple, combine~\eqref{eq:positive_c} with propositions~\ref{prop:positive_ker} and~\ref{prop:positive_coker} and lemma~\ref{lem:discriminant}. The general case follows by the same argument as corollary~\ref{cor:reduce_to_simple}.
\end{proof}
\end{corollary}

\subsubsection{Simple BY trees: negative case}\label{ss:negative}

We now turn our attention to proving theorem~\ref{thm:formula} for simple even BY trees with a negative automorphism, where the group cohomology calculations are a little more complicated. In order to carry out these computations, we will use without comment the following calculation of the cohomology of permutation representations.

\begin{proposition}\label{prop:negative_cohomology}
Let $S$ be a finite set with an action by an automorphism $F$, and write $S=S_0\sqcup S_1$ where $S_0$ (resp.\ $S_1$) is the set of elements of $S$ in an even-sized (resp.\ odd-sized) $F$-orbit. Then the low-degree cohomology of the permutation representation $\Z[S]$ is given by
\begin{align*}
\Z[S]^{-F} &\simeq \Z[S_0'] \\
\H^1(-F,\Z[S]) &\cong \F_2[S_1']\,,
\end{align*}
where $S_i'=S_i/F$ denotes the set of $F$-orbits in $S_i$. The first isomorphism sends an orbit $\omega\in S_0'$ of size $2m$ to $s-Fs+F^2s-\dots-F^{2m-1}s$ where $s\in\omega$ is any orbit-representative. The second isomorphism sends an orbit $\omega\in S_1'$ of odd size to the cocycle sending the generator $1\in\hat\Z$ to some orbit-representative $s\in\omega$. A similar description holds for the dual representation $\Z^S$ (which is also a permutation representation).
\end{proposition}

Now fix a simple even BY tree $T=(T,S)$ with a negative even automorphism~$-F$. As in proposition~\ref{prop:negative_cohomology}, we write $S=S_0\sqcup S_1$ for the partition of $S$ into $F$-orbits of even and odd size, respectively. We also write $T_1\subseteq T$ for the largest subtree of $T$ on which $F$ acts with odd order. An observation which we will use regularly is that we have $\hat S=T_1\cup S=T_1\sqcup S_0$, where $\hat S$ is as defined in construction~\ref{cons:negative_part}.

Now, exactly as in~\eqref{eq:positive_c}, the Tamagawa number of $(T,-F)$ factorises as
\begin{equation}\label{eq:negative_c}
c_T=\#\coker\left(\Lambda^{-F}\hookrightarrow(\Lambda^\vee)^{-F}\right)\cdot\#\ker\left(\H^1(-F,\Lambda)\rightarrow\H^1(-F,\Lambda^\vee)\right)\,,
\end{equation}
where $\Lambda=\H_1(T,S,\Z)$ as usual. We calculate these two terms separately, in the following two propositions.

\begin{proposition}\label{prop:negative_ker}
Keep notation as above, and write $T_1'\supseteq S_1'$ for the quotient of $T_1\supseteq S_1$ by the action of $F$. Let $\Lambda_{1,\F_2}:=\H_1(T_1',S_1',\F_2)$ denote the mod $2$ relative homology of $(T_1',S_1')$, which carries a mod $2$ intersection length pairing $\langle\cdot,\cdot\rangle_{1,\F_2}\colon\Lambda_{1,\F_2}\otimes\Lambda_{1,\F_2}\rightarrow\F_2$. Then there are canonical isomorphisms
\[
\H^1(-F,\Lambda)\cong\Lambda_{1,\F_2}\text{  and  }\H^1(-F,\Lambda^\vee)\cong\Lambda_{1,\F_2}^\vee := \Hom(\Lambda_{1,\F_2},\F_2)
\]
for which the map $\H^1(-F,\Lambda)\rightarrow\H^1(-F,\Lambda^\vee)$ induced by the intersection length pairing is identified with the map $\Lambda_{1,\F_2}\rightarrow\Lambda_{1,\F_2}^\vee$ induced by the mod $2$ intersection length pairing.
\begin{proof}
Taking cohomology of the sequence~\eqref{eq:LES_pair} gives an exact sequence
\[
0\rightarrow\H^1(-F,\Lambda)\rightarrow\F_2[S_1']\rightarrow\F_2 \,.
\]
On the other hand, the exact sequence on the mod $2$ homology of the pair $(T_1',S_1')$ gives an exact sequence
\[
0\rightarrow\Lambda_{1,\F_2}\rightarrow\F_2[S_1']\rightarrow\F_2 \,.
\]
In both cases, the right-hand map is the sum-of-coordinates map, and hence there is a canonical identification $\Lambda_{1,\F_2}\cong\H^1(-F,\Lambda)$. Concretely, this isomorphism takes the class of a path $\gamma'$ in $T_1'$ to the cohomology class $\xi$ taking the generator $1\in\hat\Z$ to any lift $\gamma$ of $\gamma'$ to a path in $T_1$.

Similarly, taking the cohomology of the dual sequence to~\eqref{eq:LES_pair} gives an exact sequence
\[
\F_2\rightarrow\F_2^{S_1'}\rightarrow\H^1(-F,\Lambda^\vee)\rightarrow0\,
\]
By identifying this sequence with the exact sequence on cohomology of the pair $(T_1',S_1')$, we obtain a canonical identification $\H^1(-F,\Lambda^\vee)\cong\Lambda_{1,\F_2}^\vee$. Concretely, this sends the class of a cocycle $\xi$ to the mod $2$ relative cohomology class given by $\gamma'\mapsto\sum_{\gamma}\xi(1)(\gamma)$, where $\gamma'$ is a path in $T_1'$ with endpoints in $S_1'$ and the summation is over all lifts $\gamma$ of $\gamma'$ to a path in $T_1$.

It follows from the explicit descriptions that the map $\H^1(-F,\Lambda)\rightarrow\H^1(-F,\Lambda^\vee)$ corresponds to the map induced by the mod $2$ intersection pairing on $\Lambda_{1,\F_2}$, completing the proof.
\end{proof}
\end{proposition}

\begin{corollary}\label{cor:negative_ker}
Keep notation as above, and let $\tilde c$ be as in theorem~\ref{thm:formula}. Then
\[
\#\ker\left(\H^1(-F,\Lambda)\rightarrow\H^1(-F,\Lambda^\vee)\right) = \begin{cases}\tilde c&\text{if $S_1\neq\emptyset$,}\\\tilde c/2&\text{if $S_1=\emptyset$.}\end{cases}
\]
\begin{proof}
Let $M\colon\Lambda_{1,\F_2}\rightarrow\Lambda_{1,\F_2}^\vee$ be the $\F_2$-linear map induced by the mod $2$ intersection length pairing, as in proposition~\ref{prop:negative_ker}, so that the left-hand side of the desired equality is equal to $2^{n(M)}$ with $n(M)$ the nullity of $M$.

We divide into several cases. In all cases, we use that $\hat S'\setminus S'=T_1'\setminus S_1'$ consists of at most one component, allowing us to read off the value of $\tilde c$ from the definition in theorem~\ref{thm:formula}. We begin by disposing of a few small cases.

\begin{itemize}
	\item If $S_1'=\emptyset$, then $\tilde c=\gcd(0,2)=2$ and $n(M)=0$, so $\tilde c/2=2^{n(M)}$ in this case, as claimed.
	\item If $\#S_1'=1$, then $\tilde c=1$ and $n(M)=0$, so $\tilde c=2^{n(M)}$ in this case, as claimed.
	\item If $\#S_1'=2$, then $\tilde c=\gcd(l,2)$ with $l$ the distance between the two points of $S_1'$, and $M$ has nullity $0$ or $1$ according as $l$ is odd or even. It follows that $\tilde c=2^{n(M)}$ in this case, as claimed.
\end{itemize}

In the remaining cases, $T_1'$ contains a vertex of degree $\geq3$ and $S_1'$ is a nonempty set of degree $1$ vertices of $T_1'$. Moreover, the condition that $T$ was even forces that the distance between any two vertices of $T_1'$ of degree $\geq3$ is also even.

Now enumerate the elements of $S_1'$ as $v_1,v_2,\dots,v_b$, so that a basis of $\Lambda_{1,\F_2}$ is given by the paths from $v_1$ to $v_j$ for $j\geq2$. Supposing firstly that $S_1'$ has $a>0$ elements lying an even distance from a vertex of $T_1'$ of degree $\geq3$, then without loss of generality we may assume that $v_1$ is such an element. With respect to the above basis, the map $M$ is given by the matrix with $b-a$ diagonal entries equal to $1$ and all other entries $0$, so that $n(M)=a-1$. Thus $\tilde c=2^{a-1}=2^{n(M)}$ as claimed.

Supposing instead that $S_1'$ has no element lying an even distance from a vertex of $T_1'$, then with respect to the above basis the map $M$ is given by the $(b-1)\times(b-1)$ matrix whose diagonal entries are all $0$ and whose off-diagonal entries are all $1$. A simple calculation verifies that $n(M)=0$ if $b$ is odd and $n(M)=1$ if $b$ is even, whence $\tilde c=\gcd(b,2)=2^{n(M)}$ as desired.
\end{proof}
\end{corollary}

This completes the calculation of $\#\ker\left(\H^1(-F,\Lambda)\rightarrow\H^1(-F,\Lambda^\vee)\right)$. The corresponding calculation for $\#\coker\left(\Lambda^{-F}\hookrightarrow(\Lambda^\vee)^{-F}\right)$ is as follows.

\begin{proposition}\label{prop:negative_coker}
Let $\hat T$ denote the BY tree $(T,\hat S)$, where $\hat S$ is as in construction~\ref{cons:negative_part}. Then
\[
\#\coker\left(\Lambda^{-F}\hookrightarrow(\Lambda^\vee)^{-F}\right) = 
\begin{cases}
c_{\hat T,+F} & \text{if $S_1\neq\emptyset$,} \\
2c_{\hat T,+F} & \text{if $S_1=\emptyset$,}
\end{cases}
\]
where $+F$ denotes the induced even automorphism of $\hat T$, endowed with a sign of $+1$ everywhere.
\begin{proof}
Let $\Lambda_+=\H_1(T,\hat S,\Z)$ denote the relative homology lattice of $(T,\hat S)$, endowed with its intersection-length pairing $\langle\cdot,\cdot\rangle_+$. Since $\hat S=T_1\sqcup S_0$, the exact sequence on the homology of a pair shows that $\Lambda_+\cong\Z[S_0]$.

By construction, all $F$-orbits in $\pi_0(T\setminus\hat S)$ have even size, so $c_{\hat T,+F}=c_{\hat T,-F}$ by lemma~\ref{lem:product_of_signs}. Moreover, since $\H^1(-F,\Lambda_+)=0$ by proposition~\ref{prop:negative_cohomology}, the sequence
\[
0\rightarrow\Lambda_+\rightarrow\Lambda_+^\vee\rightarrow\gNeron_{\hat T}\rightarrow0
\]
remains exact on taking $-F$-invariants and so
\begin{equation}\label{eq:positive=negative}\tag{$\ast$}
c_{\hat T,+F}=c_{\hat T,-F}=\#\coker\left(\Lambda_+^{-F}\hookrightarrow(\Lambda_+^\vee)^{-F}\right)\,.
\end{equation}

Now the inclusion of pairs $\iota\colon(T,S)\hookrightarrow (T,\hat S)$ induces a map $\iota_*\colon\Lambda\rightarrow\Lambda_+$, as well as a dual map $\iota^*\colon\Lambda_+^\vee\rightarrow\Lambda^\vee$. It is straightforward to check that $\iota_*$ is the composite of the boundary map $\Lambda\hookrightarrow\Z[S]$ from the long exact sequence~\eqref{eq:LES_pair} of the pair $(T,S)$ and the projection $\Z[S]\twoheadrightarrow\Z[S_0]$. Taking $-F$-fixed points in~\eqref{eq:LES_pair} and using proposition~\ref{prop:negative_cohomology}, we see that the map $\iota_*\colon\Lambda^{-F}\xrightarrow\sim\Lambda_+^{-F}$ is an isomorphism. A similar argument with the dual sequence shows that we have an exact sequence
\[
0\rightarrow(\Lambda_+^\vee)^{-F}\xrightarrow{\iota^*}(\Lambda^\vee)\rightarrow\F_2\rightarrow\F_2[S_1']\,,
\]
with the right-hand map the diagonal map. In particular, $\iota^*\colon(\Lambda_+^\vee)^{-F}\rightarrow(\Lambda^\vee)$ is an isomorphism if $S_1\neq\emptyset$, and is injective with cokernel $\F_2$ if $S_1=\emptyset$.

Finally, we claim that the pairings on $\Lambda$ and $\Lambda_+$ are compatible on $-F$-fixed elements, in the sense that the square
\begin{equation}\label{eq:pairing_square}\tag{$\ast\ast$}
\begin{tikzcd}
\Lambda^{-F} \arrow[hook]{r}\arrow{d}{\iota_*}[swap]{\wr} & (\Lambda^\vee)^{-F} \\
\Lambda_+^{-F} \arrow[hook]{r} & (\Lambda_+^\vee)^{-F} \arrow[hook]{u}{\iota^*}
\end{tikzcd}
\end{equation}
commutes. In other words, we claim that
\[
\langle\iota_*(\gamma_0),\iota_*(\gamma_1)\rangle_+ = \langle\gamma_0,\gamma_1\rangle
\]
for all $\gamma_0\in\Lambda^{-F}$ and all $\gamma_1\in\Lambda$.

To show this, suppose that $e$ is an edge in $\hat S\setminus S$, so that $e$ lies in an $F$-orbit of odd size, say $2m+1$. In the notation of definition~\ref{def:intersection}, the multiplicity $m_e(\gamma_0)$ of $e$ in $\gamma_0$ satisfies
\[
m_e(\gamma_0)=m_{F^{2m+1}e}(F^{2m+1}\gamma_0)=-m_e(\gamma_0)\,,
\]
and so $m_e(\gamma_0)=0$. It follows that we have
\[
\langle\gamma_0,\gamma_1\rangle = \sum_{e\in E(T)\setminus E(\hat S)}m_e(\gamma_0)m_e(\gamma_1) = \langle\iota_*(\gamma_0),\iota_*(\gamma_1)\rangle_+\,.
\]
This shows that the square~\eqref{eq:pairing_square} commutes. In particular, we find using~\eqref{eq:positive=negative} that
\[
\#\coker\left(\Lambda^{-F}\hookrightarrow(\Lambda^\vee)^{-F}\right) = \#\coker(\iota^*)\cdot c_{\hat T,+F}\,.
\]
Combined with the description of $\coker(\iota^*)$ above, this yields the desired result.
\end{proof}
\end{proposition}

\begin{corollary}
Theorem~\ref{thm:formula} is true when $T$ is simple and $\epsilon F=-F$ is negative.
\begin{proof}
Combine~\eqref{eq:negative_c} with corollary~\ref{cor:negative_ker} and proposition~\ref{prop:negative_coker}, and corollary~\ref{cor:formula_positive} (applied to the positive BY tree $\hat T$).
\end{proof}
\end{corollary}

\begin{remark}\label{rmk:sometimes_not_even}
In certain special cases, the BY tree $\hat T=(T,\hat S)$ can fail to be even, or fail to be simple. For example, $T$ could be consists of a chain of $2l$ edges ($l$ odd) connecting the $2$ points of $S$, and $F$ could reverse the chain of edges in $T$. This is why we couldn't assume that $T$ was even in \S\ref{ss:positive}.
\end{remark}

By corollary~\ref{cor:reduce_to_simple}, this concludes the proof of theorem~\ref{thm:formula}.\qed

\subsection{Full proof of corollary~\ref{cor:qualitative}}\label{ss:qualitative_proof}

As mentioned in the earlier proof, corollary~\ref{cor:qualitative} in fact holds even when $T_0$ and $\epsilon F$ are not even. Although this is not the main thrust of this article, we briefly outline here how one can prove this in full generality. The key point is a structural result for the cohomology of the lattices $\Lambda$ associated to BY trees $T$.

\begin{lemma}\label{lem:B_is_2-torsion}
Let $T$ be a BY tree, with associated lattice $\Lambda=\H_1(T,S,\Z)$, and let $\epsilon F$ be a signed automorphism of $T$. Then the image of the map
\[
\H^1(\epsilon F,\Lambda)\rightarrow\H^1(\epsilon F,\Lambda^\vee)
\]
induced by the intersection-length pairing on $\Lambda$ is contained in the $2$-torsion subgroup of $\H^1(\epsilon F,\Lambda^\vee)$.
\begin{proof}[Proof (sketch)]
The proof of lemma~\ref{lem:reduction_LHS} shows that the map $\H^1(\epsilon F,\Lambda)\rightarrow\H^1(\epsilon F,\Lambda^\vee)$ is the direct sum of the corresponding maps for simple BY trees. It thus suffices to prove this for simple BY trees. In the case where $\epsilon=-1$, this follows since $\H^1(-F,\Lambda)\leq\H^1(-F,\Z[S])=\F_2^{\#\text{odd orbits in $S$}}$, as in the proof of proposition~\ref{prop:negative_ker}. In the case where $\epsilon=+1$, the same argument as in the proof of proposition~\ref{prop:positive_ker} shows that $\H^1(+F,\Lambda)$ is cyclic, generated by the cocycle associated to $(1-F)y$, for $y$ some vertex of $T$. If we let $y_0$ be an $F$-fixed point of $T$, which may be the midpoint of an edge, then intersection with the path from $y_0$ to $y$ gives an element $\alpha\in\frac12\Lambda^\vee$ such that $(1-F)\alpha$ is the image of $(1-F)y$ under the map $\Lambda\hookrightarrow\Lambda^\vee$. It follows that the image of the cocycle associated to $2(1-F)y$ is the coboundary of $2\alpha$, and hence the zero element of $\H^1(+F,\Lambda^\vee)$.
\end{proof}
\end{lemma}

\begin{remark}
The image of the map $\H^1(\epsilon F,\Lambda)\rightarrow\H^1(\epsilon F,\Lambda^\vee)$ is the group $\mathfrak B_\Lambda$ studied in \cite[Definition~1.4.1]{finite_quotients}.
\end{remark}

To see how lemma~\ref{lem:B_is_2-torsion} implies corollary~\ref{cor:qualitative}, we recall that the subdivision $T_0^{(l)}$ is isometric to the metric BY tree $(T_0,l)$ (remark~\ref{rmk:metric_vs_subdivisions}), and hence we have
\[
c_{T_0^{(l)}}=\#\coker\left(\beta_l\colon\Lambda^{\epsilon F}\hookrightarrow(\Lambda^\vee)^{\epsilon F}\right)\cdot\#\ker\left(\beta_{l,*}\colon\H^1(\epsilon F,\Lambda)\rightarrow\H^1(\epsilon F,\Lambda^\vee)\right) \,,
\]
where $\Lambda=\H_1(T_0,S_0,\Z)$ and $\beta_l\colon\Lambda\hookrightarrow\Lambda^\vee$ is the map induced by the intersection length pairing of the metric BY tree $(T_0,l)$. Note that the map $\beta_l\in\Hom(\Lambda,\Lambda^\vee)$ depends linearly on the function~$l$.

Now the order of the cokernel of $\beta_l\colon\Lambda^{\epsilon F}\hookrightarrow(\Lambda^\vee)^{\epsilon F}$ is given by the absolute determinant of the matrix representation $\beta_l$ with respect to bases of $\Lambda^{\epsilon F}$ and $(\Lambda^\vee)^{\epsilon F}$. Since $\beta_l$ depends linearly on $l$, it follows that $\#\coker\left(\beta_l\colon\Lambda^{\epsilon F}\hookrightarrow(\Lambda^\vee)^{\epsilon F}\right)$ is a homogenous polynomial in~$l$.

On the other hand, by lemma~\ref{lem:B_is_2-torsion}, the map $\beta_{l,*}\colon\H^1(\epsilon F,\Lambda)\rightarrow\H^1(\epsilon F,\Lambda^\vee)[2]$ is a map of finite $\Z$-modules whose codomain is $2$-torsion, depending linearly on $l$, and hence $\#\ker\left(\beta_{l,*}\colon\H^1(\epsilon F,\Lambda)\rightarrow\H^1(\epsilon F,\Lambda^\vee)\right)$ is a constant times a power of $2$ depending only on the values of $l$ mod $2$. Combining these shows that $c_{T_0^{(l)}}$ is a function of $l$ of the desired form.\qed

\subsection{Worked examples}
\label{ss:examples}

\begin{example}\label{ex:simple}
Suppose that $p\equiv-1$ modulo $4$, and let $f\in\Z_p[x]$ be a monic polynomial of degree eight such that $f(0)$ is a square in~$\Z_p$. Assume that the Newton polygon of $f(x)$ (resp.\ $f(x-1)$, resp.\ $f(x-i)$ with $i$ a square root of $-1$) consists of six segments of slope $0$ and two segments of slope $a/2>0$ (resp.\ $b/2>0$, resp.\ $c/2>0$). This says that the eight roots of $f$ come in four pairs: the first lie in the residue disc of $0$ and are equidistant from it, and the same is true for the other three pairs in the residue discs of $1$, $i$ and $-i$. The root cluster machinery of \cite{m2d2} shows that $X_f$ is semistable \cite[Theorem~1.8(1)]{m2d2}, and allows us to write down the BY tree of the hyperelliptic curve $X_f$ with affine equation $y^2=f(x)$ (figure~\ref{fig:simple}) \cite[Theorems~5.18~\&~6.9]{m2d2}. For ease of calculation, we work with the metric BY tree produced from $f$ via \cite[Definition~D.4]{m2d2}, which is isometric to the BY tree associated to $X_f$.

\begin{figure}[!h]
\caption{The metric BY tree associated to $X_f$}
\label{fig:simple}
\vspace{0.3cm}
\begin{center}
\begin{tikzpicture}[scale=1.0]
	\YellowVertices
	\Vertex[x=1.0,y=1.0,L=\relax]{o};
	\BlueVertices
	\Vertex[x=0.0,y=0.0,L=\relax]{sw};
	\Vertex[x=2.0,y=0.0,L=\relax]{se};
	\Vertex[x=0.0,y=2.0,L=\relax]{nw};
	\Vertex[x=2.0,y=2.0,L=\relax]{ne};
	
	\YellowEdges
	\Edge(o)(ne)
	\Edge(o)(nw)
	\Edge(o)(se)
	\Edge(o)(sw)
	
	\TreeEdgeSignW(o)(ne){0.5}{c}
	\TreeEdgeSignW(o)(se){0.5}{c}
	\TreeEdgeSignW(o)(sw){0.5}{a}
	\TreeEdgeSignW(o)(nw){0.5}{b}
	
	\TreeEdgeSignW(o)(nw){0.9}{+}
	
	\ESwap{o}{ne}{o}{se}{in=45,out=315}
\end{tikzpicture}
\end{center}
\end{figure}

In this diagram, the whole graph represents the tree $T$, while the blue/solid vertices represent the vertices of $S$ (which has no edges in this example) -- by contrast, the vertices of $T$ not in $S$ are represented by yellow/open circles and the edges of $T$ not in $S$ are represented by yellow/squiggly lines. The lengths of the edges are indicated by the parameters $a$, $b$ and $c$, while the signed automorphism is indicated both with double-headed arrows for the underlying unsigned automorphism of $(T,S)$ (which here has order $2$) and with $\pm$ signs next to each connected component\footnote{Here our diagrammatic conventions differ slightly from those in \cite{semistable_types}, where the sign labels are attached to each \emph{orbit} of connected components of $T\setminus S$, and record the \emph{total} sign of the automorphism over the entire orbit.} of $T\setminus S$ (so here the sign is $+$).

According to theorem~\ref{thm:tamagawa_numbers_via_graphs} and proposition~\ref{prop:hyperelliptic-BY_equivalence_jacobians}, the Tamagawa number of $X_f/\Q_p$ is the same as the Tamagawa number of $T$, which we calculate using theorem~\ref{thm:formula} (metric version). In the notation of that theorem, we have $Q=2$ and $\tilde c=1$ (the empty product, since $\hat S=S$). The quotient tree~$T'$ is
\begin{center}
\begin{tikzpicture}[scale=1.0]
	\YellowVertices
	\Vertex[x=0.0,y=0.0,L=\relax]{o};
	\BlueVertices
	\Vertex[x=1.5,y=0.0,L=\relax]{e};
	\Vertex[x=-0.75,y=1.3,L=\relax]{nw};
	\Vertex[x=-0.75,y=-1.3,L=\relax]{sw};
	
	\YellowEdges
	\Edge(o)(e)
	\Edge(o)(sw)
	\Edge(o)(nw)
	
	\TreeEdgeSignS(o)(e){0.5}{c/2}
	\TreeEdgeSignW(o)(sw){0.4}{a}
	\TreeEdgeSignW(o)(nw){0.4}{b}
\end{tikzpicture}
\end{center}
where again the blue/solid vertices indicate the subset $S'=\hat S'\subseteq T'$, and the labels indicate the quantities $l(e')/q_{e'}$. The removal of any two of the three edges of $T'$ disconnects the three points of $S'$ from one another, and hence the formula in theorem \ref{thm:formula} provides that the Tamagawa number of $X_f$ is\[c_{X_f}=2\cdot\left(ab+b\frac c2+\frac c2a\right)=2ab+bc+ca.\]
\end{example}

\begin{example}\label{ex:comprehensive}
As a more comprehensive example to illustrate all the aspects of our formula, let us find the Tamagawa number of the following metric BY tree $T$ (which would arise from a semistable hyperelliptic curve of genus $\geq11$). This is even (definition~\ref{def:parity}) provided that the edge-lengths $a$, $w$ and $x$ are all even, which we now assume.
\begin{figure}[!h]
\label{fig:comprehensive_example}
\caption{A more comprehensive example of a BY tree}
\vspace{0.3cm}
\begin{tikzpicture}[scale=1.0]
	\YellowVertices
	\Vertex[x=-3.0,y=0.0,L=\relax]{l};
	\Vertex[x=0.75,y=1.3,L=\relax]{ur};
	\Vertex[x=0.75,y=-1.3,L=\relax]{dr};
	\BlueVertices
	\Vertex[x=-4.5,y=0.0,L=\relax]{lw};
	\Vertex[x=-3.75,y=1.3,L=\relax]{lnw};
	\Vertex[x=-2.25,y=1.3,L=\relax]{lne};
	\Vertex[x=-1.5,y=0.0,L=\relax]{le};
	\Vertex[x=-2.25,y=-1.3,L=\relax]{lse};
	\Vertex[x=-3.75,y=-1.3,L=\relax]{lsw};
	\Vertex[x=-0.55,y=2.05,L=\relax]{urw};
	\Vertex[x=1.5,y=2.6,L=\relax]{urn};
	\Vertex[x=2.05,y=0.55,L=\relax]{ure};
	\Vertex[x=0.0,y=0.0,L=\relax]{urs=drn};
	\Vertex[x=2.05,y=-0.55,L=\relax]{dre};
	\Vertex[x=1.5,y=-2.6,L=\relax]{drs};
	\Vertex[x=-0.55,y=-2.05,L=\relax]{drw};
	
	\BlueEdges
	\Edge(le)(urs=drn)
	\YellowEdges
	\Edge(l)(le)
	\Edge(l)(lne)
	\Edge(l)(lnw)
	\Edge(l)(lw)
	\Edge(l)(lsw)
	\Edge(l)(lse)
	\Edge(ur)(urw)
	\Edge(ur)(urn)
	\Edge(ur)(ure)
	\Edge(ur)(urs=drn)
	\Edge(dr)(urs=drn)
	\Edge(dr)(dre)
	\Edge(dr)(drs)
	\Edge(dr)(drw)
	
	\TreeEdgeSignS(le)(urs=drn){0.5}{w}
	\TreeEdgeSignS(l)(lw){0.6}{z}
	\TreeEdgeSignW(l)(lnw){0.5}{y}
	\TreeEdgeSignE(l)(lne){0.5}{y}
	\TreeEdgeSignS(l)(le){0.5}{x}
	\TreeEdgeSignW(l)(lse){0.7}{z}
	\TreeEdgeSignW(l)(lsw){0.5}{z}
	\TreeEdgeSignN(ur)(urw){0.5}{b}
	\TreeEdgeSignW(ur)(urn){0.5}{c}
	\TreeEdgeSignS(ur)(ure){0.4}{c}
	\TreeEdgeSignW(ur)(urs=drn){0.5}{a}
	\TreeEdgeSignW(dr)(urs=drn){0.5}{a}
	\TreeEdgeSignN(dr)(dre){0.4}{c}
	\TreeEdgeSignW(dr)(drs){0.5}{c}
	\TreeEdgeSignS(dr)(drw){0.5}{b}
	
	\TreeEdgeSignE(l)(lnw){0.9}{-}
	\TreeEdgeSignS(ur)(urw){0.9}{-}
	\TreeEdgeSignN(dr)(drw){0.9}{-}
	
	\ESwap{l}{lnw}{l}{lne}{in=150,out=30}
	\EArr{l}{lw}{l}{lsw}{in=150,out=270}
	\EArr{l}{lsw}{l}{lse}{in=210,out=330}
	\EArr{l}{lse}{l}{lw}{in=315,out=165}
	\ESwap{urs=drn}{ur}{urs=drn}{dr}{in=90,out=270}
	\ESwap{ur}{urw}{dr}{drw}{in=120,out=240}
	\EArr{ur}{urn}{dr}{dre}{in=75,out=330}
	\EArr{dr}{dre}{ur}{ure}{in=270,out=90}
	\EArr{ur}{ure}{dr}{drs}{in=30,out=285}
	\EArr{dr}{drs}{ur}{urn}{in=270,out=90}
\end{tikzpicture}
\end{figure}

Here, as before, the subgraph $S\subseteq T$ is represented by blue/solid vertices and now also has a single edge, rendered blue/straight -- the yellow/open circles and yellow/squiggly lines indicate the vertices and edges of $T$ not in $S$, respectively. The parameters $a$, $b$, $c$, $w$, $x$, $y$, $z$ indicate edge-lengths. The signed automorphism is both indicated by the arrows (identifying the orbits of the underlying unsigned automorphism as it acts on edges -- these orbits have sizes $2$, $2$, $2$, $3$ and $4$ respectively), and the signs next to each connected component of $T\setminus S$ (all $-$).

Now the quotient $T'$ of $T$ by the action of $F$ is as depicted below. Here again the blue/solid vertices and edges indicate the subgraph $S'$, while the green/square vertices and green/dashed edges indicate the extra vertices and edges in $\hat S'$. The labels on yellow/squiggly edges $e'$ (edges outside $\hat S'$) indicate the quantity $l(e')/q_{e'}$, while the labels on green/dashed edges indicate the quantity $l(e')$. We don't attach labels to blue/solid edges, since the values of $l(e')$ and $q_{e'}$ for these edges don't contribute to theorem~\ref{thm:formula}.

\begin{center}
\begin{tikzpicture}[scale=1.0]
	\BlueVertices
	\Vertex[x=-0.75,y=0,L=\relax]{cl};
	\Vertex[x=0.75,y=0,L=\relax]{cr};
	\Vertex[x=3,y=1.3,L=\relax]{ru};
	\Vertex[x=3,y=-1.3,L=\relax]{rd};
	\Vertex[x=-3,y=1.3,L=\relax]{lu};
	\Vertex[x=-3,y=-1.3,L=\relax]{ld};
	\GreenVertices
	\Vertex[x=-2.25,y=0,L=\relax]{l};
	\YellowVertices
	\Vertex[x=2.25,y=0,L=\relax]{r};
	
	\BlueEdges
	\Edge(cl)(cr)
	\GreenEdges
	\Edge(l)(cl)
	\Edge(l)(ld)
	\YellowEdges
	\Edge(r)(cr)
	\Edge(r)(ru)
	\Edge(r)(rd)
	\Edge(l)(lu)
	
	\TreeEdgeSignS(cr)(r){0.5}{a/2}
	\TreeEdgeSignE(r)(ru){0.5}{b/2}
	\TreeEdgeSignE(r)(rd){0.5}{c/4}
	\TreeEdgeSignS(l)(cl){0.5}{x}
	\TreeEdgeSignW(l)(lu){0.5}{y/2}
	\TreeEdgeSignW(l)(ld){0.5}{z}
\end{tikzpicture}
\end{center}

Following through the description in theorem~\ref{thm:formula} (metric version), we find that $Q=16$ and $\tilde c=\gcd(z,2)$ (since $x$ is even) and $r=3$. In order for a triple of edges to disconnect the four components of $\hat S'$, it must contain the left-hand yellow/squiggly edge and two of the three yellow/squiggly edges on the right. Thus we find from theorem~\ref{thm:formula} (metric version) that
\[
c_T = 16\cdot\gcd(z,2)\cdot\left(\frac{yab}8+\frac{ybc}{16}+\frac{yca}{16}\right) = y\cdot\gcd(z,2)\cdot(2ab+bc+ca)\,.
\]

\smallskip

Alternatively, one can use lemma~\ref{lem:reduction_LHS} to simplify the calculation of the Tamagawa number of $T$, finding that it is the product of the Tamagawa numbers of the following two metric BY trees.
\begin{center}
\begin{tikzpicture}[scale=1.0]
	\YellowVertices
	\Vertex[x=-3.5,y=0.0,L=\relax]{l};
	\Vertex[x=0.0,y=0.0,L=\relax]{m};
	\BlueVertices
	\Vertex[x=-5.0,y=0.0,L=\relax]{lw};
	\Vertex[x=-4.25,y=1.3,L=\relax]{lnw};
	\Vertex[x=-2.75,y=1.3,L=\relax]{lne};
	\Vertex[x=-2.0,y=0.0,L=\relax]{le};
	\Vertex[x=-2.75,y=-1.3,L=\relax]{lse};
	\Vertex[x=-4.25,y=-1.3,L=\relax]{lsw};
	\Vertex[x=-1.0,y=1.0,L=\relax]{mnw};
	\Vertex[x=1.0,y=1.0,L=\relax]{mne};
	\Vertex[x=1.0,y=-1.0,L=\relax]{mse};
	\Vertex[x=-1.0,y=-1.0,L=\relax]{msw};
	
	\YellowEdges
	\Edge(l)(le)
	\Edge(l)(lne)
	\Edge(l)(lnw)
	\Edge(l)(lw)
	\Edge(l)(lsw)
	\Edge(l)(lse)
	\Edge(m)(mnw)
	\Edge(m)(mne)
	\Edge(m)(mse)
	\Edge(m)(msw)
	
	\TreeEdgeSignS(l)(lw){0.6}{z}
	\TreeEdgeSignW(l)(lnw){0.5}{y}
	\TreeEdgeSignE(l)(lne){0.5}{y}
	\TreeEdgeSignS(l)(le){0.5}{x}
	\TreeEdgeSignW(l)(lse){0.7}{z}
	\TreeEdgeSignW(l)(lsw){0.5}{z}
	\TreeEdgeSignW(m)(mnw){0.5}{b}
	\TreeEdgeSignW(m)(mne){0.5}{c}
	\TreeEdgeSignW(m)(mse){0.5}{c}
	\TreeEdgeSignW(m)(msw){0.5}{a}
	
	\TreeEdgeSignE(l)(lnw){0.9}{-}
	\TreeEdgeSignW(m)(mnw){0.9}{+}
	
	\ESwap{l}{lnw}{l}{lne}{in=150,out=30}
	\EArr{l}{lw}{l}{lsw}{in=150,out=270}
	\EArr{l}{lsw}{l}{lse}{in=210,out=330}
	\EArr{l}{lse}{l}{lw}{in=315,out=165}
	\ESwap{m}{mne}{m}{mse}{in=45,out=315}
\end{tikzpicture}
\end{center}
We computed the Tamagawa number of the right-hand BY tree as $2ab+bc+ca$ in example~\ref{ex:simple}, and theorem~\ref{thm:formula} shows that the Tamagawa number of the left-hand BY tree is $y\cdot\gcd(x+z,2)=y\cdot\gcd(z,2)$ since $x$ is even (we omit details). This recovers the above value for $c_T$.
\end{example}

\begin{example}
Up to signed homeomorphism, there are only finitely many BY trees (with signed automorphism) arising from semistable hyperelliptic curves of a given genus (in residue characteristic $p\neq2$). Although this encompasses infinitely many signed \emph{iso}morphism types, one can use theorem~\ref{thm:formula} to write down a formula for the Tamagawa numbers of all BY trees in a fixed homeomorphism class (as a function of the ``edge-lengths''), and hence produce a complete list of the Tamagawa numbers of all semistable hyperelliptic curves of a given genus, as a function in the reduction type (signed homeomorphism class of BY tree with signed automorphism). For instance, in genus~$2$, there are twenty-three possible reduction types, listed along with their Tamagawa numbers in \cite[Table~9.3]{semistable_types}. The reader can verify that the formula in theorem~\ref{thm:formula} recovers the expressions for the Tamagawa numbers in \cite[Table~9.3]{semistable_types}. Theorem~\ref{thm:formula} makes it essentially routine to produce similar tables in higher genus: one lists all BY trees of a given genus (in the sense of \cite[Definitions~3.18 and~3.23]{semistable_types}) and then reads off the corresponding Tamagawa numbers. In genus $3$ there are one hundred and eighty-five possible reduction types, though many are degenerate forms of one another.
\end{example}

\section{Growth of Tamagawa numbers in towers}
\label{s:growth}

In this final part of this paper, we use the above techniques to examine how Tamagawa numbers of semistable hyperelliptic curves $X/K$ vary as we enlarge the ground field $K$. On the combinatorial side, changing the base field corresponds to changing the BY tree in a particularly simple manner.

\begin{lemma}\label{lem:change_of_BY_trees_in_field_extensions}
Let $X/K$ be a semistable hyperelliptic curve. Let $T=(T,S)$ be its associated BY tree, and $\epsilon F$ the signed automorphism of $T$ induced by the Frobenius over $K$, as in \S\ref{sss:BY_tree_of_curve}. Then, for all finite extensions\footnote{For the proof, we will assume that $L$ is a subfield of $\overline K$, so that the residue field of $L$ is a subfield of $\overline k$.} $L/K$, the BY tree associated to $X_L/L$ is the BY tree $T^{(e)}=(T^{(e)},S^{(e)})$ formed from $T=(T,S)$ by replacing each edge of $T$ by a chain of $e$ edges, where $e=e(L/K)$ is the ramification degree, as in \S\ref{sss:graph_conventions}. The signed automorphism of $T^{(e)}$ induced by the Frobenius over $L$ is $(\epsilon F)^f$, where $f=f(L/K)$ is the residue class degree. Here, by abuse of notation, we identify $\Aut^{\pm}(T)=\Aut^{\pm}(T^{(e)})$ in the obvious way.
\begin{proof}
Let $\mathfrak X/\O_K$ be the minimal regular model of $X/K$. This is a semistable model \cite[Theorem~10.3.34(a)]{liu:algebraic-arithmetic}, and regularity implies that every singular point of the special fibre has thickness $1$, i.e.\ has an \'etale neighbourhood given by the equation $xy=\varpi_K$ with $\varpi_K$ a uniformiser of $K$. The base change $\mathfrak X_{\O_L}$ is a semistable model of $X_L$ \cite[Corollary~10.3.36(a)]{liu:algebraic-arithmetic}, but not in general regular, since it is easy to see that all singular points of its special fibre have thickness $e$. It follows from \cite[Corollary~10.3.25]{liu:algebraic-arithmetic} that the geometric special fibre of the minimal regular model of $X_L$ is the same as $\mathfrak X_{\overline k}$, but with each intersection of two components replaced by a chain of $e-1$ projective lines connecting these components (if $e=1$ then there is no change to the geometric special fibre). In the case of self-intersections (nodal singularities of components), this means that the self-intersecting component is replaced with its normalisation, with a chain of $e-1$ projective lines connecting the two points lying above each node. The action of the Frobenius $\Frob_L\in\mathrm{Gal}(\overline K/L)$ on the geometric special fibre is the $f$th power of the Frobenius element of $\mathrm{Gal}(\overline k/k)$.

In graph-theoretic terms, this says that the reduction graph of $X_L/L$ is the $e$th subdivision $\G^{(e)}$ of the reduction graph $\G$ of $X/K$, and that $\Frob_L$ acts on $\G^{(e)}$ via (the $e$th subdivision of) $\Frob_K^f$. The corresponding assertions regarding the BY trees follow.
\end{proof}
\end{lemma}

Thus, theorem~\ref{thm:growth_in_towers} boils down to the purely combinatorial question of determining how the Tamagawa numbers $c_{T^{(e)},(\epsilon F)^f}$ depend on the parameters $e,f$. This is given by the following result, of which theorem~\ref{thm:growth_in_towers} is an immediate corollary.

\begin{theorem}\label{thm:combinatorial_growth_in_towers}
Let $T=(T,S)$ be a BY tree and let $\epsilon F$ be a signed automorphism of $T$. Suppose that $F$ is even (see Definition~\ref{def:parity}). Then there are $(a_d,r_d,s_d)\in\N\times\N_0\times\Z$ for each $d\in\N$ (equal to $(1,0,0)$ for almost all $d$) such that
\[
c_{T^{(e)},(\epsilon F)^f} = \prod_{d\mid f}\left(a_d\cdot e^{r_d}\cdot\gcd(e,2)^{s_d}\right)^{\varphi(d)}
\]
for all $e,f\in\N$.
\end{theorem}

\subsection{Totally ramified extensions}\label{ss:tot_ram}

We will first prove theorem~\ref{thm:combinatorial_growth_in_towers} in two special cases: the ``totally ramified'' case $f=1$, and the ``unramified case'' $e=1$. The former is an immediate consequence of corollary~\ref{cor:qualitative}.

\begin{proposition}\label{prop:growth_in_totally_ramified_towers}
Let $T=(T,S)$ be a BY tree and let $\epsilon F$ be a signed automorphism of $T$. Then there are $(a,r,s)\in\N\times\N_0\times\Z$ such that
\[
c_{T^{(e)},\epsilon F} = a\cdot e^r\cdot\gcd(e,2)^s
\]
for all $e\in\N$.
\end{proposition}

\subsection{Unramified extensions}

Having dealt with the dependency on $e$ in theorem~\ref{thm:combinatorial_growth_in_towers}, it now remains to control the dependency on $f$, fixing $e=1$. In other words, by definition~\ref{def:intersection}, we want to control the number of $(\epsilon F)^f$-fixed points in the Jacobian group $\gNeron_T$ as a function of $f$. We will develop tools to deal with such problems in appendix \ref{app:fixpoints}, but for now let us just record the definitions and basic properties we will need.

\begin{definition}[Fixpoint filtrations]
Let $A$ be a $\Z$-module with an endomorphism $\sigma$ (for us, always a finite-order automorphism). We define the \emph{$\sigma$-fixpoint filtration} of $A$ to be the family of sub-$\Z[\sigma]$-modules $A^{\sigma^f}$, which come with inclusions $A^{\sigma^d}\leq A^{\sigma^f}$ whenever $d\mid f$. We also define the \emph{partial quotients} of $A$ with respect to this filtration to be\[\Gr^\sigma_f(A):=\frac{A^{\sigma^f}}{\sum_{d\mid f,d\neq f}A^{\sigma^d}}.\]
\end{definition}

\begin{lemma}\label{lem:fixpoint_properties}
Let $A$ be a $\Z$-module with an endomorphism $\sigma$. Then, for any $f\in\N$, $A^{\sigma^f}$ has an exhaustive, separated\footnote{Recall that an increasing filtration $\dots\leq\Fil_{-1}M\leq\Fil_0M\leq\Fil_1M\leq\dots$ on a module~$M$ is called \emph{exhaustive} just when $\bigcup_i\Fil_iM=M$, and \emph{separated} just when $\bigcap_i\Fil_iM=0$.} filtration with partial quotients $\Gr^\sigma_d(A)$ for $d\mid f$. In particular, if $A^{\sigma^f}$ is finite then\[\#A^{\sigma^f}=\prod_{d\mid f}\#\Gr^\sigma_d(A).\]

Moreover, the $\Z[\sigma]$-module structure on the partial quotient $\Gr^\sigma_f(A)$ factors canonically through the quotient $\Z[\sigma]\twoheadrightarrow\Z[\mu_f]:=\Z[\sigma]/(\Phi_f(\sigma))$, where $\Phi_f$ is the $f$th cyclotomic polynomial.

Finally, if $B$ is another $\Z[\sigma]$-module and $C$ is a $\Z[\sigma^q]$-module for some $q\in\N$, we have $\Z[\sigma]$-module isomorphisms\[\Gr^\sigma_f(A\oplus B)\cong\Gr^\sigma_f(A)\oplus\Gr^\sigma_f(B)\]and\[\Gr^\sigma_f\left(\Ind_{\sigma^q}^\sigma C\right)\cong\Gr^{\sigma^q}_{\num{f/q}}(C)\otimes_{\Z[\mu_{\num{f/q}}]}\Z[\mu_f]\]where $\num{f/q}$ denotes the numerator of $f/q$.
\begin{proof}
Deferred to appendix.
\end{proof}
\end{lemma}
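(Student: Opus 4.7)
The plan is to establish the lemma in three main steps: construct the filtration, identify the cyclotomic $\Z[\mu_f]$-structure on the partial quotients, and verify the direct sum and induction formulas.

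For the filtration, I would fix a linear extension $d_1 < d_2 < \cdots < d_n = f$ of the divisibility order on the divisors of $f$ (so $d_j \mid d_i$ with $d_j \neq d_i$ forces $j < i$) and set $F_i := \sum_{j\leq i} A^{\sigma^{d_j}}$. By the second isomorphism theorem, identifying $F_i/F_{i-1}$ with $\Gr^\sigma_{d_i}(A)$ reduces to proving
\[
A^{\sigma^{d_i}} \cap F_{i-1} \;=\; \sum_{d\mid d_i,\, d<d_i} A^{\sigma^d}.
\]
The $\supseteq$ inclusion is immediate from the choice of ordering. For the $\subseteq$ inclusion the main tool is the basic identity $A^{\sigma^a} \cap A^{\sigma^b} = A^{\sigma^{\gcd(a,b)}}$, immediate from $a\Z+b\Z=\gcd(a,b)\Z$. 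Given a representation $a = \sum_{j<i} a_j$ with $a\in A^{\sigma^{d_i}}$ and $a_j\in A^{\sigma^{d_j}}$, I plan to induct on the number of indices $j$ with $d_j\nmid d_i$, applying $\sigma^{d_i}$ to the equation and using the $\sigma^{d_i}$-invariance of $a$ to trade one such summand for an element which the intersection identity forces to lie in $A^{\sigma^{\gcd(d_j,d_i)}}$, a proper divisor of $d_i$.

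For the cyclotomic structure, I would use the factorisation $\sigma^{d_i}-1 = \Phi_{d_i}(\sigma)\cdot g(\sigma)$ with $g(\sigma) = \prod_{e\mid d_i,\, e<d_i}\Phi_e(\sigma)$. Since $\sigma^{d_i}$ acts trivially on $A^{\sigma^{d_i}}$, the image $\Phi_{d_i}(\sigma) A^{\sigma^{d_i}}$ is annihilated by $g(\sigma)$; combining this with the fact that $\sum_{e\mid d_i,\, e < d_i} A^{\sigma^e}$ is exactly the $g(\sigma)$-torsion in $A^{\sigma^{d_i}}$ (first over $\Q$ via the isotypic decomposition $\Q[\sigma]/(\sigma^{d_i}-1)\cong\prod_{e\mid d_i}\Q(\mu_e)$, then integrally by a Bezout argument in $\Z[\sigma]$ localised at primes dividing $d_i$) yields $\Phi_{d_i}(\sigma)\cdot \Gr^\sigma_{d_i}(A) = 0$.

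The direct sum statement is immediate from $(A\oplus B)^{\sigma^d} = A^{\sigma^d}\oplus B^{\sigma^d}$. For the induction formula I would use a Shapiro-type identification, splitting $(\Ind_{\sigma^q}^\sigma C)^{\sigma^f}$ as $\Ind_{\sigma^q}^{\sigma^{\gcd(q,f)}} C^{\sigma^{\mathrm{lcm}(q,f)}}$ and tracking how the submodule $\sum_{d\mid f,\, d<f}(\Ind C)^{\sigma^d}$ interacts with this splitting. Passing to $\Gr^\sigma_f$ kills everything except the image of $\Gr^{\sigma^q}_{\bar{f/q}}(C)$, which acquires a $\Z[\mu_f]$-module structure via base change along $\Z[\mu_{\bar{f/q}}]\hookrightarrow\Z[\mu_f]$. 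The principal obstacle throughout is the $\subseteq$ inclusion in the filtration identity of step one: distributivity of intersection over sum for the submodules $\{A^{\sigma^d}\}$ is not automatic over $\Z$, and it is in this trade-off induction that the careful $\Z$-torsion bookkeeping is concentrated.
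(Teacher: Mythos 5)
Your proposal has the same overall shape as the paper's appendix: build a linear filtration on $A^{\sigma^f}$ from a linear extension of the divisor order, identify the $\Z[\mu_f]$-structure on $\Gr^\sigma_f$ from the factorisation $\sigma^{d}-1=\prod_{e\mid d}\Phi_e(\sigma)$, and treat direct sums and inductions separately. You also correctly flag the real difficulty as the $\subseteq$ inclusion $A^{\sigma^{d_i}}\cap\sum_{j<i}A^{\sigma^{d_j}}\subseteq\sum_{d\mid d_i,\,d<d_i}A^{\sigma^d}$. However, the route you sketch does not actually close this gap. The exponent-gcd identity $A^{\sigma^a}\cap A^{\sigma^b}=A^{\sigma^{\gcd(a,b)}}$ handles a single ``bad'' summand $a_{j_0}$ (with $d_{j_0}\nmid d_i$), but with two or more bad summands, applying $(\sigma^{d_i}-1)$ to the decomposition gives only a single linear relation among them; it does not let you replace any one bad summand by something fixed by $\sigma^{\gcd(d_{j_0},d_i)}$. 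Tracing through such a trade forces one to compare kernels of general products of cyclotomic polynomials, and the distributivity needed there is precisely the paper's Proposition A.0.1: a Bezout identity for $\Phi_{S\setminus S'}$ and $\Phi_{S'\setminus S}$ in $\Z[t]$ when $S,S'$ are divisor-closed, proved by analysing when two cyclotomics $\Phi_m,\Phi_n$ can share a root in $\bar\F_\ell$ (iff $m/n$ is a power of $\ell$). This is the hidden number-theoretic input that your ``Bezout argument in $\Z[\sigma]$ localised at primes dividing $d_i$'' needs to become; it is not available from the exponent-gcd identity, and without it the trade-off induction does not advance.

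A second, separate issue: your Shapiro-type identification is stated with the wrong index. Computing $(\Ind_{\sigma^q}^\sigma C)^{\sigma^f}$ directly, one finds that the orbits of $j\mapsto j-f$ on $\Z/q$ give $\gcd(q,f)$ copies of $C^{\sigma^{\mathrm{lcm}(q,f)}}$, so the correct induction is from $\langle\sigma^q\rangle$ up to the subgroup of index $\gcd(q,f)$ in $\langle\sigma\rangle$, i.e.\ $\Ind_{\sigma^q}^{\sigma^{q/\gcd(q,f)}}$, not $\Ind_{\sigma^q}^{\sigma^{\gcd(q,f)}}$ as you wrote (which has index $q/\gcd(q,f)$). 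The paper sidesteps this by reducing to $q$ prime, where only the three cases $q\nmid f$, $q\parallel f$, $q^2\mid f$ arise, and the hard one ($q\parallel f$) requires an explicit exact sequence of flat $\Z[\mu_{f/q}]$-modules to identify the cokernel. Without correcting the Shapiro formula and supplying the cyclotomic Bezout lemma, both the filtration step and the induction step remain open.
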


The concept of fixpoint filtrations and partial quotients gives an integral analogue of the isotypic decomposition of representations of cyclic groups on $\Q$-vector spaces. However, while the isotypic pieces of such representations are well-behaved, being free modules over the vector spaces $\Q[\mu_n]$, much less can be expected in general for the partial quotients of $\Z[\sigma]$-modules. Thus, it will be useful for us to isolate a class of $\Z[\sigma]$-modules where the fixpoint filtration is well-behaved.

\begin{definition}\label{def:fixpoint-regular}
Let $A$ be a $\Z$-module with a finite-order automorphism $\sigma$. We say that $A$ is \emph{fixpoint-regular} just when there is a collection $(A_d)_{d\in\N}$ of $\Z$-modules such that we have $\Z[\mu_d]$-module isomorphisms $\Gr^\sigma_d(A)\isoarrow A_d\otimes_\Z\Z[\mu_d]$ for all $d$. (The $A_d$ are necessarily trivial for $d$ not dividing the order of $\sigma$.)

It is clear from lemma \ref{lem:fixpoint_properties} that the class of fixpoint-regular $\Z[\sigma]$-modules is closed under direct sums and induction from $\Z[\sigma^q]$ to $\Z[\sigma]$ for any $q\in\N$.
\end{definition}

The main theorem of this section asserts that Jacobian groups of BY trees are well-behaved in the above sense.

\begin{theorem}
\label{thm:structure_of_partial_quotients_precise}
Let $T=(T,S)$ be a BY tree and let $\epsilon F$ be an even signed automorphism of $T$. Then the Jacobian group $\gNeron_T$ is fixpoint-regular (for the action of $\epsilon F$).
\end{theorem}

\begin{corollary}\label{cor:growth_in_unramified_towers}
Let $T=(T,S)$ be a BY tree and let $\epsilon F$ be an even signed automorphism of $T$. Then there are $(a_d)_{d\in\N}\in\N$ (equal to $1$ for $d$ not dividing the order of $\epsilon F$) such that
\[
c_{T,(\epsilon F)^f} = \prod_{d\mid f}a_d^{\varphi(d)}
\]
for all $f\in\N$.
\begin{proof}
This follows from lemma~\ref{lem:fixpoint_properties}. The desired $a_d$ are the sizes of the groups $A_d$ from definition~\ref{def:fixpoint-regular}.
\end{proof}
\end{corollary}

Our proof of theorem~\ref{thm:structure_of_partial_quotients_precise} is ultimately inductive, and revolves around the following key lemma, allowing us to add and remove $F$-fixed points from the set $S$.

\begin{lemma}\label{lem:adding_points}
Let $T=(T,S)$ be a BY tree with an even signed automorphism $\epsilon F$, and suppose that $*$ is an $F$-fixed vertex of $T$ not lying in $S$. Write $T_*$ for the BY tree $(T,S\cup\{*\})$. Then $\gNeron_T$ is fixpoint-regular for the action of $\epsilon F$ if and only if $\gNeron_{T_*}$ is. Here, $\epsilon F$ is viewed as a signed automorphism of $T_*$ whose sign function is the composite $\pi_0(T\setminus(S\cup\{*\}))\twoheadrightarrow\pi_0(T\setminus S)\xrightarrow\epsilon\{\pm1\}$.
\begin{proof}
The intersection-length pairings on the lattices $\Lambda=\H_1(T,S,\Z)$ and $\Lambda_*=\H_1(T,S\cup\{*\},\Z)$ induce a commuting square
\begin{center}
\begin{tikzcd}
\Lambda \arrow[hook]{r}\arrow[hook]{d} & \Lambda^\vee \\
\Lambda_* \arrow[hook]{r} & \Lambda_*^\vee \arrow[two heads]{u}
\end{tikzcd}
\end{center}
in which all arrows are equivariant for the actions of $\epsilon F$. If we define $\Pi:=\Lambda_*^\vee/\Lambda$, then there are equivariant surjections $\Pi\twoheadrightarrow\Lambda^\vee/\Lambda=\gNeron_T$ and $\Pi\twoheadrightarrow\Lambda_*^\vee/\Lambda_*=\gNeron_{T_*}$, whose kernels are the free rank $1$ $\Z$-modules $\ker\left(\Lambda_*^\vee\twoheadrightarrow\Lambda^\vee\right)$ and $\Lambda_*/\Lambda$, respectively. We will see in proposition~\ref{prop:rank_1_surj} that this implies that $\Pi$ is fixpoint-regular if and only if $\gNeron_T$, respectively $\gNeron_{T_*}$, is, and hence we are done.
\end{proof}
\end{lemma}

\begin{proof}[Proof of theorem~\ref{thm:structure_of_partial_quotients_precise}]
We proceed by strong induction on the quantity
\[
\alpha(T):=\#E(T)+\#V(T)-\#V(S)\,.
\]
Let $T$ be a BY tree, and suppose that the result is true for all BY trees $T'$ such that $\alpha(T')<\alpha(T)$ (with respect to all even signed automorphisms of $T'$). Let $\epsilon F\in\Aut^{\pm}(T)$ be even. We divide into three cases.

Firstly, if $T$ has an $F$-fixed vertex $*$ not lying in $S$, then our inductive assumption ensures that $\gNeron_{(T,S\cup\{*\})}$ is fixpoint-regular for the action of $\epsilon F$. Lemma~\ref{lem:adding_points} implies that the same is true of $\gNeron_T$ and we are done in this case.

Secondly, if $T$ is not simple, then the proof of corollary~\ref{cor:reduce_to_simple} shows that
\[
\gNeron_T\cong\bigoplus_{i\in I}\Ind_{(\epsilon F)^{q_i}}^{\epsilon F}\gNeron_{T_i}
\]
where each $T_i$ is the closure of a connected component of $T\setminus S$. It is easy to see that $\alpha(T_i)<\alpha(T)$ for all $i$, and hence $\gNeron_T$ is a direct sum of inductions of fixpoint-regular modules, so is itself fixpoint-regular.

It remains to deal with the case that $T$ is simple, and that all of its $F$-fixed vertices lie in $S$. This implies that every $F$-fixed vertex must have degree $1$ in $T$, which is only possible if $T$ consists of a single edge, $S$ contains the endpoints of $T$, and $F$ is the identity. In this case, $\epsilon F$ acts as $\pm1$ on $\gNeron_T$, and hence is fixpoint-regular.
\end{proof}

\subsection{General extensions}

We now formally combine the results of the previous two sections to prove theorem~\ref{thm:structure_of_partial_quotients_precise} in general.

\begin{proof}[Proof of theorem~\ref{thm:structure_of_partial_quotients_precise}]
By corollary \ref{cor:growth_in_unramified_towers}, we know that, for each $e$ there are $a_d(e)\in\N$ (equal to $1$ for $d$ not dividing the order of $\epsilon F$) such that\[c_{T^{(e)},(\epsilon F)^f}=\prod_{d\mid f}a_d(e)^{\varphi(d)}\]for all $f$. But by proposition \ref{prop:growth_in_totally_ramified_towers}, for fixed $f$ the Tamagawa number $c_{T^{(e)},(\epsilon F)^f}$ is of the form $ae^r\gcd(e,2)^s$ for $a\in\Q^\times$ and $r,s\in\Z$. Applying the M\"obius inversion formula to the product representation of $c_{T^{(e)},(\epsilon F)^f}$ above, we see that $a_d(e)^{\varphi(d)}$ must also be a function of $e$ of such a form. But $a_d(e)$ is a positive integer for all $e$, which forces it to be of the form $a_d(e)=a_de^{r_d}\gcd(e,2)^{s_d}$ where $a_d\in\N$ and $r_d\in\N_0$ as desired.
\end{proof}

\appendix

\section{Fixpoint filtrations}\label{app:fixpoints}

In this appendix, we set out the basic properties of fixpoint filtrations, aiming to justify the content of lemma \ref{lem:fixpoint_properties}. Recall that we are considering $\Z$-modules $A$ endowed with an endomorphism\footnote{The reader can feel free to replace ``endomorphism'' with ``automorphism'' in this section, as our definitions will only see the submodule $\bigcup_fA^{\sigma^f}$ of $A$ consisting of those elements on which $\sigma$ acts as an automorphism of finite order.} $\sigma$, and that we're interested in the family of $\Z[\sigma]$-submodules $A^{\sigma^f}$, which we call the \emph{fixpoint filtration} of $A$ (indexed by $\N$ with the divisibility ordering). We are also interested in the \emph{partial quotients} of this filtration, by which we mean the $\Z[\sigma]$-modules\[\Gr^\sigma_f(A):=\frac{A^{\sigma^f}}{\sum_{d\mid f,d\neq f}A^{\sigma^d}}.\]Our chief method of proof is careful calculations involving cyclotomic polynomials, for which we need a preparatory proposition.

\begin{proposition}\label{prop:coprime_cyclotomics}
Let $P_1,\dots,P_m\in\Z[t]$ be integer polynomials, each of which is a product of some cyclotomic polynomials $P_i=\prod_j\Phi_{d_{ij}}$. Then the $P_i$ generate a proper ideal of $\Z[t]$ if and only if there exist:
\begin{itemize}
	\item a prime number $\ell$; and
	\item for each index $1\leq i\leq m$, an index $j_i$;
\end{itemize}
such that $d_{ij_i}/d_{i'j_{i'}}$ is a power of~$\ell$ for all $i,i'$ (i.e.\ is $\ell^n$ for some $n\in\Z$).
\begin{proof}
Let $R=\Z[t]/(P_1,\dots,P_m)$. If the ideal $(P_1,\dots,P_m)$ is proper, then $R\neq0$ and so there is a surjection $R\twoheadrightarrow F$ from $R$ to a field $F$. The field $F$, being a finitely generated $\Z$-algebra, is a finite field, and hence a subfield of $\overline\F_\ell$ for some $\ell$.

The image $\zeta$ of $t$ under the map $R\twoheadrightarrow F\subseteq\overline\F_\ell$ is a common root of the polynomials $P_i$ in $\overline\F_\ell$. For each $i$, the factorisation $P_i=\prod_j\Phi_{d_{ij}}$ ensures that there is an index $j_i$ such that $\zeta$ is a root of $\Phi_{d_{ij_i}}$.

But the roots of $\Phi_{d_{ij_i}}$ in $\overline\F_\ell$ are exactly the primitive $d_{ij_i}^\circ$th roots of unity, where $d^\circ$ denotes the largest $\ell$-free factor of $d$. Since $\zeta$ is a primitive $d_{ij_i}^\circ$th root of unity for all $i$, this implies that the $d_{ij_i}^\circ$ are all equal. In other words, $d_{ij_i}/d_{i'j_{i'}}$ is a power of $\ell$ for all $i,i'$, which is what we wanted to prove.

In the converse direction, suppose that $\ell$, $j_i$ and $n_{i,i'}$ exist satisfying $d_{ij_i}/d_{i'j_{i'}}=\ell^{n_{i,i'}}$ for all $i,i'$. This says that the $d_{ij_i}^\circ$ are all equal to one another, and we write $d^\circ$ for this common value. Now let $\zeta\in\overline\F_\ell$ be a primitive $d^\circ$th root of unity. It follows that $\zeta$ is a root of $\Phi_{ij_i}$, and hence of $P_i$, for all $i$. Hence there is a ring homomorphism $R\rightarrow\overline\F_\ell$ sending $t$ to $\zeta$. The existence of such a homomorphism implies that $R\neq0$, and hence that $(P_1,\dots,P_m)$ is proper.
\end{proof}
\end{proposition}

For us, the main consequence of this proposition is that submodules of $\Z[\sigma]$-modules cut out by products of cyclotomic polynomials are particularly well-behaved.

\begin{lemma}\label{lem:cups_and_caps}
For a finite subset $S\subseteq\N$ let $\Phi_S:=\prod_{d\in S}\Phi_d$, where as usual we take $\Phi_S=1$ if $S$ is empty. If $S,S'\subseteq\N$ are finite subsets closed under divisors, then for every $\Z[\sigma]$-module $A$ we have\[A[\Phi_{S\cap S'}(\sigma)]=A[\Phi_S(\sigma)]\cap A[\Phi_{S'}(\sigma)]\]and\[A[\Phi_{S\cup S'}(\sigma)]=A[\Phi_S(\sigma)]+A[\Phi_{S'}(\sigma)].\]

Here, $A[P]$ denotes the kernel of the multiplication-by-$P$ map $A\rightarrow A$, for $P\in\Z[\sigma]$.
\begin{proof}
Let $P=\Phi_{S\setminus S'}=\Phi_S/\Phi_{S\cap S'}=\Phi_{S\cup S'}/\Phi_{S'}$ and $P'=\Phi_{S'\setminus S}=\Phi_{S'}/\Phi_{S\cap S'}=\Phi_{S\cup S'}/\Phi_S$. Since no element of $S\setminus S'$ divides any element of $S'\setminus S$ and vice versa, proposition \ref{prop:coprime_cyclotomics} ensures that $P$ and $P'$ generate the unit ideal of $\Z[t]$: there are integer polynomials $Q$ and $Q'$ such that $QP+Q'P'=1$.

For the first equality, multiplying $QP+Q'P'=1$ by $\Phi_{S\cap S'}$ we see that we have $Q\Phi_S+Q'\Phi_{S'}=\Phi_{S\cap S'}$ and hence $A[\Phi_S(\sigma)]\cap A[\Phi_{S'}(\sigma)]\leq A[\Phi_{S\cap S'}(\sigma)]$. As $\Phi_{S\cap S'}\mid\Phi_S,\Phi_{S'}$, the converse inclusion is clear.

For the second equality, consider some $a\in A[\Phi_{S\cup S'}(\sigma)]$. Now we have that $\Phi_S(\sigma)Q(\sigma)P(\sigma)a=Q(\sigma)\Phi_{S\cup S'}(\sigma)a=0$ and $\Phi_{S'}(\sigma)Q'(\sigma)P'(\sigma)a=0$ similarly, so $a=Q(\sigma)P(\sigma)a+Q'(\sigma)P'(\sigma)a\in A[\Phi_S(\sigma)]+A[\Phi_{S'}(\sigma)]$. Hence we have the inclusion $A[\Phi_{S\cup S'}(\sigma)]\leq A[\Phi_S(\sigma)]+A[\Phi_{S'}(\sigma)]$, and the other inclusion is clear.
\end{proof}
\end{lemma}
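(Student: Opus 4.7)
The plan is to reduce both equalities to a single B\'ezout-type identity in $\Z[t]$, produced by one application of Proposition \ref{prop:coprime_cyclotomics}. Because $S$ and $S'$ are closed under divisors, so are $S\cap S'$ and $S\cup S'$, and I get the clean factorizations $\Phi_S=\Phi_{S\cap S'}\cdot P$, $\Phi_{S'}=\Phi_{S\cap S'}\cdot P'$, and $\Phi_{S\cup S'}=\Phi_{S\cap S'}\cdot P\cdot P'$, where I set $P:=\Phi_{S\setminus S'}$ and $P':=\Phi_{S'\setminus S}$.

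The only real step is to show that $P$ and $P'$ generate the unit ideal of $\Z[t]$. By Proposition \ref{prop:coprime_cyclotomics}, this can fail only if there exist $d\in S\setminus S'$ and $d'\in S'\setminus S$ whose quotient $d/d'$ is a (possibly negative or zero) integer power of a fixed prime $\ell$. The closure under divisors hypothesis rules this out: if $d=\ell^k d'$ with $k>0$, then $d'\mid d\in S$ forces $d'\in S$, contradicting $d'\in S'\setminus S$; the case $k<0$ is symmetric using closure of $S'$; and $k=0$ is excluded because $S\setminus S'$ and $S'\setminus S$ are disjoint. Hence there are $Q,Q'\in\Z[t]$ with $QP+Q'P'=1$.

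Both equalities then follow formally. For the intersection, multiplying $QP+Q'P'=1$ through by $\Phi_{S\cap S'}$ yields $Q\Phi_S+Q'\Phi_{S'}=\Phi_{S\cap S'}$, so any $a\in A[\Phi_S(\sigma)]\cap A[\Phi_{S'}(\sigma)]$ is annihilated by $\Phi_{S\cap S'}(\sigma)$; the reverse containment is immediate from divisibility. For the sum, given $a\in A[\Phi_{S\cup S'}(\sigma)]$, the factorizations above force $P(\sigma)a\in A[\Phi_{S'}(\sigma)]$ and $P'(\sigma)a\in A[\Phi_S(\sigma)]$, so the identity applied to $a$ exhibits $a=Q(\sigma)P(\sigma)a+Q'(\sigma)P'(\sigma)a$ as an element of $A[\Phi_{S'}(\sigma)]+A[\Phi_S(\sigma)]$; again the reverse containment is immediate.

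The main conceptual obstacle is the coprimality of $P$ and $P'$, which is exactly where the closure under divisors hypothesis gets used; without it the claim genuinely fails (for instance, $\Phi_6$ and $\Phi_3$ share a root over $\bar\F_2$). Everything else is pure formalism once the B\'ezout identity is in hand.
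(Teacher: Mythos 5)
Your proof is correct and follows essentially the same route as the paper's: defining $P=\Phi_{S\setminus S'}$ and $P'=\Phi_{S'\setminus S}$, invoking Proposition \ref{prop:coprime_cyclotomics} together with divisor-closedness to obtain the B\'ezout identity $QP+Q'P'=1$, and then deriving both equalities formally. Your spelling-out of the three cases $k>0$, $k<0$, $k=0$ in the coprimality check is a welcome elaboration of what the paper states more tersely, and your derivation of the sum equality cleanly keeps track of which factorization ($\Phi_{S'}\cdot P=\Phi_{S\cup S'}$ versus $\Phi_S\cdot P'=\Phi_{S\cup S'}$) places $P(\sigma)a$ in $A[\Phi_{S'}(\sigma)]$ and $P'(\sigma)a$ in $A[\Phi_S(\sigma)]$, avoiding a small index swap present in the paper's intermediate line.
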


\begin{corollary}
Let $S$ be a finite subset of $\N$ closed under divisors, and $A$ a $\Z[\sigma]$-module. Then\[A[\Phi_S(\sigma)]=\sum_{d\in S}A^{\sigma^d}\]
\begin{proof}
For each $d\in S$ let $S_d$ be the set of divisors of $d$, so that $S=\bigcup_{d\in S}S_d$. Since $A[\Phi_{S_d}(\sigma)]=A[\sigma^d-1]=A^{\sigma^d}$, an iterated application of the preceding lemma provides the desired equality.
\end{proof}
\end{corollary}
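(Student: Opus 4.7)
The plan is to deduce the corollary directly from lemma \ref{lem:cups_and_caps} by recognising each summand $A^{\sigma^d}$ as a subgroup of the form $A[\Phi_T(\sigma)]$ for $T$ a divisor-closed set, and then applying the union half of the lemma iteratively.

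The key preliminary observation is the standard factorisation $t^d - 1 = \prod_{d'\mid d}\Phi_{d'}(t)$. Setting $S_d := \{d' \in \N : d' \mid d\}$ (the set of divisors of $d$, which is finite and closed under divisors), this factorisation reads $\sigma^d - 1 = \Phi_{S_d}(\sigma)$ as operators on $A$, and hence
\[A^{\sigma^d} = \ker\bigl(\sigma^d - 1 : A \to A\bigr) = A[\Phi_{S_d}(\sigma)].\]
This rewrites each summand on the right-hand side of the desired equality as a ``torsion subgroup'' of the form appearing in lemma \ref{lem:cups_and_caps}.

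Next, because $S$ is closed under divisors, every element $d \in S$ satisfies $S_d \subseteq S$, and conversely each element of $S$ lies in $S_d$ for $d$ equal to itself; hence $S = \bigcup_{d \in S} S_d$. Since $S$ is finite, this is a finite union of divisor-closed subsets, so one may apply the second identity of lemma \ref{lem:cups_and_caps} repeatedly (inducting on $\#S$): each application replaces $A[\Phi_{T \cup T'}(\sigma)]$ by $A[\Phi_T(\sigma)] + A[\Phi_{T'}(\sigma)]$, which is legitimate because both $T$ and $T'$ remain divisor-closed at every step. Combining this with the rewriting of each $A^{\sigma^d}$ above gives
\[A[\Phi_S(\sigma)] \;=\; A\Bigl[\Phi_{\bigcup_{d\in S} S_d}(\sigma)\Bigr] \;=\; \sum_{d \in S} A[\Phi_{S_d}(\sigma)] \;=\; \sum_{d \in S} A^{\sigma^d},\]
as required.

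There is essentially no obstacle here: the only substantive content is already packaged in lemma \ref{lem:cups_and_caps}, and the corollary is a clean bookkeeping exercise using the cyclotomic factorisation. The one point that deserves a moment's care in writing is the inductive application of the union identity, to make sure that at each stage one is unioning two divisor-closed sets (so that the hypothesis of the lemma is met); this is automatic, since a union of divisor-closed sets is divisor-closed.
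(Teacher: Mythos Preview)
Your proof is correct and follows essentially the same approach as the paper's: both define $S_d$ as the divisor set of $d$, use the cyclotomic factorisation $\sigma^d-1=\Phi_{S_d}(\sigma)$ to identify $A^{\sigma^d}=A[\Phi_{S_d}(\sigma)]$, observe $S=\bigcup_{d\in S}S_d$, and then iterate the union identity of lemma \ref{lem:cups_and_caps}. Your version simply spells out the inductive application (and the preservation of divisor-closure) a bit more explicitly.
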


Lemma \ref{lem:cups_and_caps} has further important consequences. Firstly, we can use this lemma to turn the partially ordered fixpoint filtration $A^{\sigma^f}$ on a $\Z[\sigma]$-module $A$ into a totally ordered one, thereby justifying our use of the phrase ``partial quotients'' to describe the subquotients $\Gr^\sigma_f(A)$, and secondly, we find that these partial quotients give something akin to an isotypic decomposition of the $\Z[\sigma]$-module $A$, in that the $\Z[\sigma]$-module structure on $\Gr^\sigma_f(A)$ factors through $\Z[\mu_f]$.

\begin{corollary}
Let $A$ be a $\Z[\sigma]$-module. Then for each $f\in\N$, $A^{\sigma^f}$ possesses an exhaustive separated filtration whose partial quotients are $\Gr^\sigma_d(A)$ for $d\mid f$ in some order.
\begin{proof}
Pick a sequence $\emptyset=S_0\subsetneq S_1\subsetneq\dots\subsetneq S_m$ of subsets of $\N$, each closed under divisors, so that each $S_{i+1}\setminus S_i=\{d_i\}$ has size $1$, and $S_m$ is the set of divisors of $f$. For instance, we might take $S_i$ to be the set of the first $i$ divisors of $f$. We consider the $\Z$-indexed filtration $0=A_0\leq A_1\leq\dots\leq A_m$ of $A^{\sigma^f}$ defined by\[A_i=A[\Phi_{S_i}(\sigma)]=\sum_{d\in S_i}A^{\sigma^d}\]so that $A_0=0$ and $A_m=A^{\sigma^f}$.

Now lemma \ref{lem:cups_and_caps} (applied to $S_i$ and the set of divisors of $d_i$) shows that $A_i\cap A^{\sigma^{d_i}}=A\left[\frac{\sigma^{d_i}-1}{\Phi_{d_i}(\sigma)}\right]=\sum_{d\mid d_i,d\neq d_i}A^{\sigma^d}$ and $A_i+A^{\sigma^{d_i}}=A_{i+1}$. Hence by the second isomorphism theorem, $A_{i+1}/A_i=A^{\sigma^{d_i}}/\sum_{d\mid d_i,d\neq d_i}A^{\sigma^d}=\Gr^\sigma_{d_i}(A)$. Since $d_i$ runs through all the divisors of $f$, it follows that the partial quotients of the filtration are $\Gr^\sigma_d(A)$ for each $d\mid f$, as desired.
\end{proof}
\end{corollary}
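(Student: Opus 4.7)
The plan is to construct the filtration explicitly by enumerating the divisors of $f$ in an order compatible with divisibility, and then adding the corresponding fixed-point submodules one at a time. Specifically, I would enumerate the divisors of $f$ as $d_1, d_2, \ldots, d_m$ so that $d_i \mid d_j$ forces $i \leq j$ (any linear extension of the divisibility partial order works), and set $S_i := \{d_1, \ldots, d_i\}$. The choice of ordering guarantees that each $S_i$ is closed under divisors. Using the corollary preceding this statement, I would define
\[A_i := \sum_{d \in S_i} A^{\sigma^d} = A[\Phi_{S_i}(\sigma)],\]
and observe that $A_0 = 0$ and $A_m = A^{\sigma^f}$, so extending trivially outside the range $\{0, \ldots, m\}$ gives an exhaustive, separated, $\Z$-indexed filtration of $A^{\sigma^f}$.

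The remaining task is to identify the partial quotients with the claimed $\Gr^\sigma_{d_i}(A)$. Letting $D_i$ denote the set of divisors of $d_i$ (also divisor-closed), my strategy is to apply lemma \ref{lem:cups_and_caps} to the pair $(S_{i-1}, D_i)$. The key combinatorial observation is that $S_{i-1} \cup D_i = S_i$ (every element of $D_i \setminus \{d_i\}$ already lies in $S_{i-1}$ by our chosen ordering) while $S_{i-1} \cap D_i$ equals the set of proper divisors of $d_i$ (since $d_i \notin S_{i-1}$). Lemma \ref{lem:cups_and_caps} then gives
\[A_i = A_{i-1} + A^{\sigma^{d_i}} \quad \text{and} \quad A_{i-1} \cap A^{\sigma^{d_i}} = \sum_{d \mid d_i,\, d \neq d_i} A^{\sigma^d},\]
so the second isomorphism theorem identifies $A_i/A_{i-1}$ with $\Gr^\sigma_{d_i}(A)$. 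As $i$ ranges from $1$ to $m$, the index $d_i$ ranges over all divisors of $f$, yielding the desired description of the partial quotients.

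The real substance of the argument has already been absorbed into lemma \ref{lem:cups_and_caps}, which upgrades the coprimality statement in proposition \ref{prop:coprime_cyclotomics} into an identity of submodules cut out by products of cyclotomic polynomials. Given that lemma, the main obstacle is merely combinatorial bookkeeping: choosing the divisor ordering so each partial sum $S_{i-1}$ stays divisor-closed, and confirming that $S_{i-1} \cap D_i$ is precisely the set of proper divisors of $d_i$. Both are routine observations about linear extensions of finite posets, so I expect no genuine technical obstruction beyond carefully aligning the notation.
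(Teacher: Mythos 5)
Your proof is correct and follows essentially the same approach as the paper's: both construct a chain of divisor-closed subsets $S_i$ differing by one element at each step, set $A_i = A[\Phi_{S_i}(\sigma)]$, and apply lemma \ref{lem:cups_and_caps} together with the second isomorphism theorem to identify each partial quotient with $\Gr^\sigma_{d_i}(A)$. The only cosmetic difference is that you describe the chain via a linear extension of the divisibility order whereas the paper simply posits the existence of such a chain.
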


\begin{corollary}\label{cor:Phi_annihilates_partial_quotients}
If $A$ is any $\Z[\sigma]$-module then $\Phi_f(\sigma)$ annihilates the partial quotient $\Gr^\sigma_f(A)$ for all $f\in\N$.
\begin{proof}
$\Phi_f(\sigma)A^{\sigma^f}\leq A\left[\frac{\sigma^f-1}{\Phi_f(\sigma)}\right]=\sum_{d\mid f,d\neq f}A^{\sigma^d}$. Hence the induced action of $\Phi_f(\sigma)$ on $\Gr^\sigma_f(A)$ is zero.
\end{proof}
\end{corollary}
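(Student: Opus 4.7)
The plan is to exploit the identity $\sigma^f-1=\Phi_f(\sigma)\cdot\prod_{d\mid f,d\neq f}\Phi_d(\sigma)$ together with the previous corollary that identifies $A[\Phi_S(\sigma)]$ with $\sum_{d\in S}A^{\sigma^d}$ for $S\subseteq\N$ closed under divisors.

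Concretely, I would start by taking any $a\in A^{\sigma^f}$, so that $(\sigma^f-1)a=0$. Factoring the cyclotomic decomposition of $\sigma^f-1$, this gives
\[
\prod_{d\mid f,\,d\neq f}\Phi_d(\sigma)\cdot\bigl(\Phi_f(\sigma)a\bigr)=0,
\]
so that $\Phi_f(\sigma)a$ lies in $A[\Phi_S(\sigma)]$, where $S=\{d\in\N:d\mid f,\,d\neq f\}$. This set $S$ is closed under divisors (since any divisor of a proper divisor of $f$ is still a proper divisor of $f$), so the previous corollary applies and identifies $A[\Phi_S(\sigma)]$ with $\sum_{d\mid f,\,d\neq f}A^{\sigma^d}$.

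Hence $\Phi_f(\sigma)a$ lies in $\sum_{d\mid f,d\neq f}A^{\sigma^d}$, which is precisely the subgroup we quotient out by to form $\Gr_f^\sigma(A)$. So the induced action of $\Phi_f(\sigma)$ on $\Gr_f^\sigma(A)$ is zero, as claimed.

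There is no serious obstacle here: the corollary is essentially a bookkeeping consequence of the previous corollary combined with the cyclotomic factorisation of $\sigma^f-1$. The only thing one has to notice is that $S=\{d\mid f:d\neq f\}$ is closed under divisors, which is immediate.
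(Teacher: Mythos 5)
Your proof is correct and takes essentially the same route as the paper: the inclusion $\Phi_f(\sigma)A^{\sigma^f}\leq A\left[\frac{\sigma^f-1}{\Phi_f(\sigma)}\right]$ followed by the preceding corollary identifying $A[\Phi_S(\sigma)]$ with $\sum_{d\in S}A^{\sigma^d}$ for $S=\{d\mid f: d\neq f\}$. You simply spell out the element-chasing that the paper compresses into one line, including the (easy but worth stating) observation that $S$ is closed under divisors.
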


With these results, we have now justified all of lemma \ref{lem:fixpoint_properties} save the behaviour of the partial quotients under direct sums and inductions. The case of direct sums is trivial, while the case of inductions involves some more technical manipulations.

\begin{lemma}\label{lem:partial_quotients_of_inductions}
Let $A$ be an abelian group with an action by $\sigma^q$, so that by corollary \ref{cor:Phi_annihilates_partial_quotients} $\Gr^{\sigma^q}_f(A)$ can be viewed as a $\Z[\mu_f]$-module with the $\sigma^q$-action given by multiplication by $\zeta_f$. Then\[\Gr^\sigma_f\left(\Ind_{\sigma^q}^\sigma A\right)\simeq\Gr^{\sigma^q}_{\num{f/q}}(A)\otimes_{\Z[\mu_{\num{f/q}}]}\Z[\mu_f]\]where $\num{f/q}$ denotes the numerator of $f/q$. In particular, when $q$ is prime we have\[\Gr^{\sigma^q}_f\left(\Ind_{\sigma^q}^\sigma A\right)\simeq\begin{cases}\Gr^{\sigma^q}_{f/q}(A)\otimes_{\Z[\mu_{f/q}]}\Z[\mu_f]&\text{if $q\mid f$}\\\Gr^{\sigma^q}_f(A)&\text{else}\end{cases}\]
\begin{proof}
It suffices to prove the case when $q$ is prime. Note that\[\left(\Ind_{\sigma^q}^\sigma A\right)^{\sigma^d}=\begin{cases}\Ind_{\sigma^q}^\sigma A^{\sigma^d}&\text{if $q\mid d$}\\(1+\sigma^d+\dots+\sigma^{(q-1)d})A^{\sigma^{qd}}\cong A^{\sigma^{qd}}&\text{if $q\nmid d$}\end{cases}.\]

Combining this classification with the definition\[\Gr^\sigma_f\left(\Ind_{\sigma^q}^\sigma A\right)=\frac{\left(\Ind_{\sigma^q}^\sigma A\right)^{\sigma^f}}{\sum_{d\mid f,d\neq f}\left(\Ind_{\sigma^q}^\sigma A\right)^{\sigma^d}}\]we see immediately that when $q\nmid f$ we have $\Gr^\sigma_f\left(\Ind_{\sigma^q}^\sigma A\right)=\frac{A^{(\sigma^q)^f}}{\sum_{d\mid f,d\neq f}A^{(\sigma^q)^d}}=\Gr^{\sigma^q}_f(A)$, as desired.

When $q^2\mid f$ instead, then on the denominator we need only take those $d$ such that $q\mid d$, and hence by exactness of $\Ind_{\sigma^q}^\sigma$ we have that $\Gr^\sigma_f\left(\Ind_{\sigma^q}^\sigma A\right)=\Ind_{\sigma^q}^\sigma\frac{A^{\sigma^f}}{\sum_{d\mid f/q,d\neq f/q}A^{\sigma^{qd}}}=\Ind_{\sigma^q}^\sigma\Gr^{\sigma^q}_{f/q}(A)$. Since $\Gr^{\sigma^q}_{f/q}(A)$ is a $\Z[\mu_{f/q}]$-module, this is the same as $\Gr^{\sigma^q}_{f/q}(A)\otimes_{\Z[\mu_{f/q}]}\Z[\mu_f]$, as desired.

The most difficult case is when $q$ exactly divides $d$. Here on the denominator we need only take those $d$ such that $q\mid d$, along with $d=f/q$. In other words, we can identify $\Gr^\sigma_f\left(\Ind_{\sigma^q}^\sigma A\right)$ as the cokernel of the natural map\[\frac{\left(\Ind_{\sigma^q}^\sigma A\right)^{\sigma^{f/q}}}{\sum_{d\mid f/q,d\neq f/q}\left(\Ind_{\sigma^q}^\sigma A\right)^{\sigma^d}}\longrightarrow\frac{\left(\Ind_{\sigma^q}^\sigma A\right)^{\sigma^f}}{\sum_{q\mid d\mid f,d\neq f}\left(\Ind_{\sigma^q}^\sigma A\right)^{\sigma^d}}.\]Exactly as we did above, we can identify the leftmost of these groups with $\Gr^{\sigma^q}_{f/q}(A)$, the rightmost with $\Ind_{\sigma^q}^\sigma\Gr^{\sigma^q}_{f/q}(A)$, and the map between them as multiplication by $1+\sigma^f+\dots+\sigma^{(q-1)f}$. 

Yet we have an exact sequence of $\Z[\mu_{f/q}]$-modules\[0\longrightarrow\Z[\mu_{f/q}]\longrightarrow\Ind_{\sigma^q}^\sigma\Z[\mu_{f/q}]\longrightarrow\Z[\mu_f]\longrightarrow0\]where $\Z[\mu_{f/q}]$ and $\Z[\mu_f]$ are given $\sigma^q$- and $\sigma$-actions in the usual way. The first arrow is multiplication by $1+\sigma^f+\dots+\sigma^{(q-1)f}$, and the second (which is $\sigma$-equivariant) sends $\sigma^i\mapsto\zeta_q^i$. Since this sequence is an exact complex of flat $\Z[\mu_{f/q}]$-modules, it remains exact when we tensor with $\Gr^{\sigma^q}_{f/q}(A)$ and hence we obtain a $\sigma$-equivariant exact sequence\[0\longrightarrow\Gr^{\sigma^q}_{f/q}(A)\longrightarrow\Ind_{\sigma^q}^\sigma\Gr^{\sigma^q}_{f/q}(A)\longrightarrow \Gr^{\sigma^q}_{f/q}(A)\otimes_{\Z[\mu_{f/q}]}\Z[\mu_f]\longrightarrow0.\]But we identified $\Gr^\sigma_f\left(\Ind_{\sigma^q}^\sigma A\right)$ as the cokernel of the left-hand arrow, so that $\Gr^\sigma_f\left(\Ind_{\sigma^q}^\sigma A\right)\simeq\Gr^{\sigma^q}_{f/q}(A)\otimes_{\Z[\mu_{f/q}]}\Z[\mu_f]$ as desired.
\end{proof}
\end{lemma}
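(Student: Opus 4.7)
The plan is to reduce to the case where $q$ is prime and then analyse $\Gr^\sigma_f(\Ind_{\sigma^q}^\sigma A)$ using the explicit structure of the fixed-point spaces $(\Ind A)^{\sigma^d}$. For the reduction, if $q = q_1 q_2$ I would apply transitivity $\Ind_{\sigma^q}^\sigma = \Ind_{\sigma^{q_2}}^\sigma \circ \Ind_{\sigma^q}^{\sigma^{q_2}}$ together with the inductive hypothesis; the nested tensor-product expressions compose cleanly because $\bar{\bar{f/q_2}/q_1} = \bar{f/q}$ — a multiplicativity identity for the numerator-of-fraction operation that one verifies prime-by-prime on $v_p$.

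For prime $q$, the key input is that $(\Ind A)^{\sigma^d} = \Ind A^{\sigma^d}$ when $q \mid d$, while when $q \nmid d$, primality makes $\sigma^d$ act as a single $q$-cycle on the summands of $\Ind A$, giving a ``diagonal'' copy of $A^{\sigma^{qd}}$ embedded via $a \mapsto (1 + \sigma^d + \cdots + \sigma^{(q-1)d})(1 \otimes a)$. I then split on $v_q(f)$. When $q \nmid f$, every proper divisor of $f$ is of the second type, and direct bookkeeping gives $\Gr^\sigma_f(\Ind A) \cong \Gr^{\sigma^q}_f(A)$, matching the claim (with $\bar{f/q} = f$). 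When $v_q(f) \geq 2$, one checks that every diagonal contribution from $d \mid f$ with $q \nmid d$ satisfies $qd \mid f/q$ and is thus absorbed into $\sum_{q \mid e \mid f,\, e \neq f} \Ind A^{\sigma^e}$. This leaves $\Gr^\sigma_f(\Ind A) = \Ind_{\sigma^q}^\sigma \Gr^{\sigma^q}_{f/q}(A)$, and the polynomial identity $\Phi_{f/q}(x^q) = \Phi_f(x)$ (valid when $q \mid f/q$) yields $\Z[\sigma] \otimes_{\Z[\sigma^q]} \Z[\mu_{f/q}] = \Z[\mu_f]$, translating this into the required tensor-product form.

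The hard part will be the remaining subcase $v_q(f) = 1$. Now the divisor $d = f/q$ is \emph{not} absorbed and contributes an extra diagonal copy of $A^{\sigma^f}$ inside $\Ind A^{\sigma^f}$; writing $B := \Gr^{\sigma^q}_{f/q}(A)$, the computation reduces to the cokernel $\Ind B / \mathcal{D}$, where $\mathcal{D}$ is the image of this diagonal. Ring-theoretically, $\Ind B = R \otimes_{\Z[\mu_{f/q}]} B$ with $R = \Z[\sigma]/\Phi_{f/q}(\sigma^q)$; since in this regime $\Phi_{f/q}(x^q) = \Phi_{f/q}(x) \Phi_f(x)$, we have $R = \Z[\sigma]/(\Phi_{f/q} \Phi_f)$, and $\mathcal{D}$ is the $R$-submodule generated by the image of $N = (\sigma^f - 1)/(\sigma^{f/q} - 1) = \Phi_f \cdot M$, where $M = \prod_{e' \mid f/q,\, e' \neq f/q} \Phi_{qe'}$. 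So $\Gr^\sigma_f(\Ind A) \cong (R/NR) \otimes_{\Z[\mu_{f/q}]} B$, and the crux is to check $R/NR \cong \Z[\mu_f]$. The exact sequence $0 \to \Phi_f R \to R \to \Z[\mu_f] \to 0$, combined with the identification $\Phi_f R \cong \Z[\mu_{f/q}]$ (via $\Phi_f \cdot h \mapsto h \bmod \Phi_{f/q}$), translates $NR$ into the ideal $M \cdot \Z[\mu_{f/q}]$, so I need $M(\zeta_{f/q})$ to be a unit. This follows from the resultant formula for cyclotomic polynomials: under $v_q(f) = 1$ with $e' \neq f/q$, both $qe'/(f/q)$ and its reciprocal fail to be prime powers, so each $\Phi_{qe'}(\zeta_{f/q})$ has norm $1$. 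This cyclotomic-arithmetic verification is where the real work concentrates.
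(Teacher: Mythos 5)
Your proof is correct, and for the easy cases ($q\nmid f$ and $q^2\mid f$) it follows essentially the same route as the paper, except that you spell out the reduction to prime $q$ (transitivity of $\Ind$ plus the multiplicativity $\bar{\bar{f/q_2}/q_1}=\bar{f/q}$) whereas the paper leaves it as a one-liner.

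For the hard case $v_q(f)=1$, however, you take a genuinely different route. The paper identifies $\Gr^\sigma_f(\Ind A)$ as the cokernel of multiplication by $1+\sigma^f+\dots+\sigma^{(q-1)f}$ from $\Gr^{\sigma^q}_{f/q}(A)$ to $\Ind_{\sigma^q}^\sigma\Gr^{\sigma^q}_{f/q}(A)$, and concludes by tensoring the short exact sequence $0\to\Z[\mu_{f/q}]\to\Ind_{\sigma^q}^\sigma\Z[\mu_{f/q}]\to\Z[\mu_f]\to0$ of flat $\Z[\mu_{f/q}]$-modules against $\Gr^{\sigma^q}_{f/q}(A)$. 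Your approach instead works entirely inside the ring $R=\Z[\sigma]/\Phi_{f/q}(\sigma^q)$, identifies $\Gr^\sigma_f(\Ind A)$ with $(R/NR)\otimes_{\Z[\mu_{f/q}]}B$ where $N=\Phi_f\cdot M$ and $M=\prod_{e'\mid f/q,\ e'\neq f/q}\Phi_{qe'}$, and reduces the claim to verifying that $M(\zeta_{f/q})$ is a unit in $\Z[\mu_{f/q}]$. This last step is indeed correct: with $k=(f/q)/e'>1$ coprime to $q$, the ratio $qe'/(f/q)=q/k$ has $v_q=1$ but also $v_\ell<0$ for any prime $\ell\mid k$, so it is not a power of a single prime, whence $\Phi_{qe'}(\zeta_{f/q})$ has norm $\pm1$ by the standard criterion on resultants of cyclotomic polynomials. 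What each approach buys: the paper's flatness argument is cleaner once one accepts that $\Z[\mu_f]$ is flat (in fact free) over $\Z[\mu_{f/q}]$; yours is more explicit and self-contained modulo the cyclotomic resultant formula, but requires the careful bookkeeping with $N$, $M$ and the identification $\Phi_f R\cong\Z[\mu_{f/q}]$. Both need some nontrivial cyclotomic input, just packaged differently. One small point worth being pedantic about: when passing from the quotient $\Ind B/\mathcal D$ to $(R/NR)\otimes B$, you implicitly use that $\mathcal D$ is an $R$-submodule (which holds because the diagonal is the $\sigma^{f/q}$-fixed subgroup, hence $\sigma$-stable) together with right-exactness of the tensor product; it is worth making this explicit in a final write-up.
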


We finish with a technical result used in the proof of lemma~\ref{lem:adding_points}.

\begin{proposition}\label{prop:rank_1_surj}
Let $\psi\colon\Pi\twoheadrightarrow\gNeron$ be a surjection of $\Z[\sigma]$-modules whose kernel is a free $\Z$-module of rank $1$. Then $\psi$ induces an isomorphism on $\Gr^\sigma_f$ for all $f>2$. In particular, $\Pi$ is fixpoint-regular (definition~\ref{def:fixpoint-regular}) if and only if $\gNeron$ is.
\begin{proof}
The action of $\sigma$ on $Z:=\ker(\Pi\twoheadrightarrow\gNeron)$ is multiplication by $\pm1$; we deal with the two cases separately.

In the case that $\sigma$ acts on $Z$ by $+1$, we note that
\[
\Gr^\sigma_f(A)=\frac{A^{\sigma^f}/A^\sigma}{\sum_{d\mid f,d\neq f}(A^{\sigma^d}/A^\sigma)}
\]
for all $f>1$ and any $\Z[\sigma]$-module $A$. In particular, the map $\Gr^\sigma_f(\psi)$ for $f>1$ fits into a commuting diagram
\begin{center}
\begin{tikzcd}
\bigoplus_{d\mid f,d\neq f}\Pi^{\sigma^d}/\Pi^\sigma \arrow[r]\arrow[d,"\bigoplus(\psi\text{ mod $\gNeron^\sigma$})"] & \Pi^{\sigma^f}/\Pi^\sigma \arrow[r]\arrow[d,"\psi\text{ mod $\gNeron^\sigma$}"] & \Gr^\sigma_f(\Pi) \arrow[d,"\Gr^\sigma_f(\psi)"] \\
\bigoplus_{d\mid f,d\neq f}\gNeron^{\sigma^d}/\gNeron^\sigma \arrow[r] & \gNeron^{\sigma^f}/\gNeron^\sigma \arrow[r] & \Gr^\sigma_f(\gNeron)
\end{tikzcd}
\end{center}
with exact rows. Thus it suffices to prove that the map $\Pi^{\sigma^d}/\Pi^\sigma\rightarrow\gNeron^{\sigma^d}/\gNeron^\sigma$ induced by $\psi$ is an isomorphism for all $d\geq1$. To do this, we note that the exact sequence
\begin{equation}\label{eq:rank_1_surj}
0\rightarrow Z\rightarrow\Pi\rightarrow\gNeron\rightarrow0
\end{equation}
remains exact upon taking $\sigma$- and $\sigma^d$-fixed points, and hence we have a commuting diagram
\begin{center}
\begin{tikzcd}
0 \arrow{r} & Z \arrow{r}\arrow[equals]{d} & \Pi^\sigma \arrow{r}\arrow[hook]{d} & \gNeron^\sigma \arrow{r}\arrow[hook]{d} & 0 \\
0 \arrow{r} & Z \arrow{r} & \Pi^{\sigma^d} \arrow{r} & \gNeron^{\sigma^d} \arrow{r} & 0
\end{tikzcd}
\end{center}
with exact rows. Applying the snake lemma shows that the induced map $\Pi^{\sigma^d}/\Pi^\sigma\rightarrow\gNeron^{\sigma^d}/\gNeron^\sigma$ is an isomorphism, as desired.

Now, in the case that $\sigma$ acts on $Z$ by $-1$, we use the corresponding fact that
\[
\Gr^\sigma_f(A)=\frac{A^{\sigma^f}/A^{\sigma^{\gcd(f,2)}}}{\sum_{d\mid f,d\neq f}(A^{\sigma^d}/A^{\sigma^{\gcd(d,2)}})}
\]
for all $f>2$ and any $\Z[\sigma]$-module $A$. Thus it suffices to prove that the induced map $\Pi^{\sigma^d}/\Pi^{\sigma^{\gcd(d,2)}}\rightarrow\gNeron^{\sigma^d}/\gNeron^{\sigma^{\gcd(d,2)}}$ is an isomorphism for all $d$. When $d$ is even, this follows from the above calculation (applied to $\sigma^2$), so we focus on the case that $d$ is odd.

Taking $\sigma$- and $\sigma^d$-fixed points of~\eqref{eq:rank_1_surj} provides a commuting diagram
\begin{center}
\begin{tikzcd}
0 \arrow{r} & \Pi^\sigma \arrow{r}\arrow[hook]{d} & \gNeron^\sigma \arrow{r}\arrow[hook]{d} & \F_2 \arrow{r}\arrow[equals]{d} & \H^1(\sigma,\Pi)[2] \arrow{d} \\
0 \arrow{r} & \Pi^{\sigma^d} \arrow{r} & \gNeron^{\sigma^d} \arrow{r} & \F_2 \arrow{r} & \H^1(\sigma^d,\Pi)[2]
\end{tikzcd}
\end{center}
with exact rows, where $\H^1(\sigma,-)$ denotes continuous cohomology of the action of $\hat\Z$ where a generator acts via $\sigma$, as usual. The inflation--restriction exact sequence implies that the kernel of the right-hand vertical map is $\H^1(\hat\Z/d\hat\Z,\Pi^{\sigma^d})[2]=0$ since $d$ is odd, and hence this map is injective. This implies that the two rightmost horizontal maps have the same kernel, and so the snake lemma again implies that the induced map $\Pi^{\sigma^d}/\Pi^\sigma\rightarrow\gNeron^{\sigma^d}/\gNeron^\sigma$ is an isomorphism, as desired.

The assertion regarding fixpoint-regularity follows from the observation that fixpoint-regularity of $A$ depends only on $\Gr^\sigma_f(A)$ for $f>2$.
\end{proof}
\end{proposition}

\end{document}